\numberwithin{equation}{section}
\title{\Large{\uppercase{\bf Regularity and separation for Sierpi\'{n}ski products of graphs}}}
\author{\Large{Riccardo W. Maffucci}}
\date{}
\newcommand{\Addresses}{  
		R.W.~Maffucci, \textsc{Dipartimento di Matematica, Universit\`a di Torino\\\indent Via Carlo Alberto 10, Turin 10123, Italy}\par\nopagebreak\vspace{-0.35cm}
		\textit{E-mail address}, R.W.~Maffucci: \href{mailto:riccardowm@hotmail.com}{\texttt{riccardowm@hotmail.com}}
  }
\def\calP{\mathcal{P}}
\def\calR{\mathcal{R}}
\def\calS{\mathcal{S}}
\def\calT{\mathcal{T}}
\def\ca{\mathcal{A}}
\def\cb{\mathcal{B}}
\def\cg{\mathcal{G}}
\def\q{\hspace{0.04cm}\square\hspace{0.04cm}}
\def\c{\circ}
\def\t{\otimes_f}
\def\s{\otimes}
\DeclareMathOperator{\ii}{im}
\newtheorem{thm}{Theorem}[section]
\newtheorem{lemma}[thm]{Lemma}
\newtheorem{prop}[thm]{Proposition}
\newtheorem{cor}[thm]{Corollary}
\newtheorem{defin}[thm]{Definition}
\newtheorem{rem}[thm]{Remark}
\begin{document}
\titleformat{\section}
  {\Large\scshape}{\thesection}{1em}{}
\titleformat{\subsection}
  {\large\scshape}{\thesubsection}{1em}{}
\maketitle
\Addresses

\begin{abstract}
The Sierpi\'{n}ski product of graphs generalises the vast and relevant class of Sierpi\'{n}ski-type graphs, and is also related to the classic lexicographic product of graphs. Our first main results are necessary and sufficient conditions for the higher connectivity of Sierpi\'{n}ski products. Among other applications, we characterise the polyhedral ($3$-connected and planar) Sierpi\'{n}ski products of polyhedra.

Our other main result is the complete classification of the regular polyhedral Sierpi\'{n}ski products, and more generally of the regular, connected, planar Sierpi\'{n}ski products. 

To prove this classification, we introduce and study the intriguing class of planar graphs where each vertex may be assigned a colour in such a way that each vertex has neighbours of the same set of colours and in the same cyclic order around the vertex.

We also completely classify the planar lexicographic products.
\end{abstract}
{\bf Keywords:} Graph product, Sierpi\'{n}ski product, Sierpi\'{n}ski-type graph, Lexicographic product, Regular graph, Planar graph, Higher connectivity, Cut-set, Separating vertex, Class of graphs, Polyhedron, $3$-polytope, Graph transformation.
\\
{\bf MSC(2020):} 05C76, 05C10, 05C75, 05C85, 05C12, 52B05, 52B10.
% 05C75, 05C76, 05C62, 05C10, 05C40, 52B05, 52B10, 05C85, 05C83.

\tableofcontents

\section{Introduction}
\subsection{Graph products and polyhedra}
We are interested in polyhedra that are products of graphs. The graphs we consider are simple and finite. By polyhedra we mean planar, $3$-connected graphs ($3$-polytopes), since these are precisely the $1$-skeleta (wireframes) of polyhedral solids, as established by the Rademacher-Steinitz Theorem \cite{radste}.

Polyhedral graphs are hence a special type of planar graphs. The planar is the most studied class of graphs. Motivation to study planar, $3$-connected graphs comes from their neat properties, including: they have a unique embedding in the sphere \cite{whit32} (hence also in the plane, up to choosing the external region); the planar dual is not only always unique (as it is independent of the embedding) and always a simple graph, but also always a polyhedron; every face (region) is delimited by a polygon (cycle), and moreover every pair of faces intersects either in the empty set, or at a vertex, or at an edge; all polyhedra may be constructed recursively starting from the pyramids (wheel graphs) and performing only two operations, namely adding an edge and expanding a vertex \cite{tutt61}; equivalently, they may be constructed recursively starting from the pyramids, adding edges and taking duals \cite{tutt61}; the class of maximal planar graphs is the subclass of polyhedra where all faces are triangular (triangulations of the sphere/plane).

The three fundamental and most studied graph products are the Kronecker (also called direct or tensor), Cartesian, and strong \cite[\S 4.1]{haimkl}. Which polyhedra are Kronecker products was established recently in \cite{maffucci2024classification}, and investigated further in \cite{de2024cancellation}. Polyhedral Cartesian \cite{behzad1969topological,maffucci2024classification} and strong \cite{jha1993note,maffucci2024classification} products have also been classified. We begin by considering the graph product that is considered to be the next most fundamental, namely the lexicographic product \cite[p.\ 44]{haimkl}. We classify the planar (and also the polyhedral) lexicographic products.

% The classification of polyhedral Cartesian products \cite[Proposition 1.9]{mafkrp} follows readily from \cite{behmah}. The classification of polyhedral strong products \cite[Proposition 1.10]{mafkrp} follows readily from \cite{jhaslu}.

The main focus of this paper is a product closely related to the lexicographic, called the Sierpi\'{n}ski product. We investigate which polyhedra are Sierpi\'{n}ski products of graphs. This turns out to be a much richer question.

Given the graphs $A=(V(A),E(A))$ and $B=(V(B),E(B))$, we would like to define a graph product $A\star B$ that satisfies the following natural properties:
\begin{itemize}
\item
$\star$ is associative;
\item
$V(A\star B)=V(A)\times V(B)$;
\item
at least one of the projections $V(A\star B)\to V(A)$ and $V(A\star B)\to V(B)$ is a weak homomorphism, in the following sense.
\end{itemize}
We call the map $f:V(A)\to V(B)$ a graph homomorphism if it preserves adjacencies i.e., for every $u,v\in V(A)$ we have
\[uv\in E(A) \Rightarrow f(u)f(v)\in E(B).\]
Accordingly, an isomorphism is a bijective homomorphism. We call $f:V(A)\to V(B)$ a weak graph homomorphism if for every $u,v\in V(A)$ we have
\[uv\in E(A) \Rightarrow f(u)f(v)\in E(B) \text{ or } f(u)=f(v).\]

It turns out that there are essentially only four graph products satisfying the above three conditions, namely the Kronecker, Cartesian, strong, and lexicographic \cite[p.\ 44]{haimkl} (the word 'essentially' here means up to ignoring certain products that only inherit the adjacency structure of one factor \cite[p. 44]{haimkl}).

We will now write the definition of lexicographic product $\c$, and compare it with the more well-known Cartesian product $\q$. We write for the vertex sets
\[V(A\c B)=V(A\q B)=V(A)\times V(B),\]
%where $\times$ is the Cartesian product of sets, 
and for the edge sets
\begin{equation*}
	E(A\c B)=\{(a_1,b_1)(a_{2},b_2) : (a_1=a_2 \text{ and } b_1b_2\in E(B)) \text{ or } a_1a_2\in E(A)\}
\end{equation*}
and
\begin{equation*}
	E(A\q B)=\{(a_1,b_1)(a_2,b_2) : (a_1=a_2 \text{ and } b_1b_2\in E(B)) \text{ or } (a_1a_2\in E(A) \text{ and } b_1=b_2)\}.
\end{equation*}

It follows that $A\q B$ is a spanning subgraph of $A\c B$. In particular, if $A\c B$ is planar then so is $A\q B$.

The Kronecker, Cartesian, and strong products are commutative, while the lexicographic product is not \cite[p.\ 43]{haimkl}. In the Kronecker, Cartesian, and strong products, both projections are weak homomorphisms, whereas in the lexicographic product, only one projection is a weak homomorphism.

One may construct $A\c B$ by starting with the graphs
\begin{equation}
\label{eq:aB}
\{aB : a\in V(A)\},
\end{equation}
where $aB\simeq B$ via the isomorphism
\[(a,b)\leftrightarrow b, \quad b\in V(B),\]
and then adding the edges
\[\{(a_1,b_1)(a_{2},b_2) : a_1a_2\in E(A)\}.\]

\subsection{Sierpi\'{n}ski product}
The main focus of this paper is the Sierpi\'{n}ski product of graphs, introduced in \cite{kpzz19} as a natural generalisation of the family of Sierpi\'{n}ski-type graphs. It was further investigated in \cite{henning2024resolvability,tian2024general}. The terminology 'Sierpi\'{n}ski-type graph' was introduced in \cite{hinz2017survey} to encompass classes such as Hanoi graphs, Sierpi\'{n}ski graphs, Sierpi\'{n}ski triangle graphs, and generalisations. These have recently attracted a considerable amount of attention \cite{klavvzar1997graphs,hinz2002planarity,kaimanovich2003random,klavvzar2005crossing,cristea2013distances,brevsar2018packing,fang2022topological}. The fact that the Tower of Hanoi problem with four or more pegs is difficult is not surprising from a graph-theoretical point of view seeing as, outside of a few small special cases, the corresponding Hanoi graphs are non-planar, whereas the Hanoi graphs for three or fewer pegs are all planar \cite{hinz2002planarity}.

To define the Sierpi\'{n}ski product of the graphs $A,B$ with respect to the fixed map
\[f:V(A)\to V(B),\]
one writes
\[V(A\t B)=V(A)\times V(B)\]
and
\begin{equation*}
	E(A\t B)=\{(a_1,b_1)(a_{2},b_2) : a_1=a_2 \text{ and } b_1b_2\in E(B)\}\cup\{(a_1,f(a_2))(a_2,f(a_1)) : a_1a_2\in E(A)\}.
\end{equation*}
Note that for every $f:V(A)\to V(B)$, the product $A\t B$ is a (spanning) subgraph of $A\c B$. As in the case of the lexicographic product, the Sierpi\'{n}ski product is non-commutative \cite[\S 2]{kpzz19}. One may define the Sierpi\'{n}ski product with multiple factors, though the product in non-associative \cite[\S 4]{kpzz19}. Similarly to the lexicographic product, one may construct $A\t B$ by starting with the graphs $aB$ \eqref{eq:aB}, and then adding the edges
\[\{(a_1,f(a_2))(a_2,f(a_1)) : a_1a_2\in E(A)\}.\]

In the special case where $V(A)\subseteq V(B)$ and $f$ is the identity function on its domain, we write $A\t B$ simply as
\[A\s B.\]
One has accordingly
\begin{equation*}
	E(A\s B)=\{(a_1,b_1)(a_{2},b_2) : a_1=a_2 \text{ and } b_1b_2\in E(B)\}\cup\{(a_1,a_2)(a_2,a_1) : a_1a_2\in E(A)\}.
\end{equation*}

We say that $f$ is a \textbf{locally injective} function if whenever $a_1\neq a_2$ are distinct neighbours of the same vertex, then one has $f(a_1)\neq f(a_2)$.

\paragraph{Basic definitions and terminology.} We say that a graph $G$ is $k$-connected if $G$ has more than $k$ vertices, and however one removes fewer than $k$ vertices, the result is a connected graph. We say that $G$ is of connectivity $k$ if $k$ is the maximum integer such that $G$ is $k$-connected.
\\
In a graph $G$, a subset $W\subset V(G)$ is a cut-set in $G$ if removing all elements of $W$ from $G$ increases the number of connected components. If $W$ has cardinality $k$ we call it a $k$-cut. If $W=\{w\}$ then we say that $w$ is a separating vertex. Note that if $W$ is a cut-set, then there exist $x,y\in V(G)$ such that every $xy$-path in $G$ contains at least one element of $W$. We say that $W$ is a minimal cut-set if it is a cut-set but every proper subset of $W$ is not a cut-set.
\\
A plane graph is a planar graph considered together with a planar embedding. In case of polyhedra, the embedding is unique up to choosing the external region. An outerplanar graph is a plane graph where all vertices appear around the boundary of the same region.
\\
When we consider the neighbours of a vertex in cyclic order around the vertex, this may be clockwise or counterclockwise.
\\
Vertices of graph products $A\star B$ will be written as $(a,b)$ where $a\in V(A)$ and $b\in V(B)$, or simply as $ab$ when there is no possibility of confusion.

\subsection{Main results}
The product $A\t B$ is connected if and only if both of $A,B$ are connected \cite[Proposition 2.12]{kpzz19}. Our first main results concern the \textbf{higher connectivity of Sierpi\'{n}ski products}. For $2$-connectivity, we have established the following necessary and sufficient condition.

\begin{thm}
	\label{thm:sep}
	Let $A,B$ be connected, non-trivial graphs, and $f:V(A)\to V(B)$ a function. Then
	\[(a,b)\in V(A\t B)\]
	is a separating vertex if and only if one of the following conditions is verified:
	\begin{enumerate}[label=(\roman*)]
		\item
		\label{eq:sepi}
		$a$ is a separating vertex of $A$, and moreover there exists a partition
		\[H_1,H_2,\dots,H_n, \ n\geq 2 \quad\text{ of } \ V(A-a)\]
		such that if $h_i\in H_i$ and $h',h_i$ are in the same connected component of $A-a$, then $h'\in H_i$, and such that if $h_i\in H_i$ and $h_j\in H_j$ are neighbours of $a$, where $i\neq j$, then the vertex $b$ lies on every
		\[f(h_i)f(h_j)\text{-path}\]
		in $B$;
		\item
		\label{eq:sepii}
		$b$ is a separating vertex of $B$, and moreover there exists a connected component $J$ of $B-b$ disjoint from
		\[\{f(a') : aa'\in E(A)\};\]
		\item
		\label{eq:sepiii}
		the image under $f$ of every neighbour of $a$ is $b$.
	\end{enumerate}
	%Moreover, either
	%\[\deg_A(a)=1\]
	%or $b$ is a separating vertex of $B$.
\end{thm}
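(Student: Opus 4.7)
My plan is to prove both directions of the biconditional by exploiting the ``copy structure'' of $A\t B$: for each $a\in V(A)$ the induced subgraph on $\{a\}\times V(B)$ is a copy $aB$ of $B$, and the only edges between distinct copies are the connecting edges $(a_1,f(a_2))(a_2,f(a_1))$ for $a_1a_2\in E(A)$, each of which is incident in its copy to a single ``port'' vertex $(a_j,f(a_{3-j}))$.

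For sufficiency I will handle the three conditions separately, exhibiting in each case explicit components of $A\t B - (a,b)$. Under (iii), every connecting edge leaving $aB$ is incident in $aB$ to $(a,b)$, so removing $(a,b)$ isolates the non-empty set $V(aB)\setminus\{(a,b)\}$ from all other copies. Under (ii), the set $\{a\}\times V(J)$ has no edges leaving it in $A\t B - (a,b)$: its internal edges live inside $J\subseteq B-b$, while a connecting edge from $(a,j)$ would require $j=f(a')$ for some $aa'\in E(A)$, contradicting the disjointness hypothesis. Under (i), for each part $H_i$ I form $K_i$ by taking $\bigcup_{a'\in H_i}a'B$ together with the components of $aB-(a,b)$ containing a port $(a,f(h))$ for some $h\in H_i$ adjacent to $a$; the path-separation hypothesis, read inside $aB-(a,b)\simeq B-b$, shows that ports to different fibres lie in different components, so the $K_i$ are pairwise disjoint non-empty sets and $n\geq 2$ yields the separation.

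For necessity I will assume $(a,b)$ is separating and define $\phi:V(A-a)\to\pi_0(A\t B - (a,b))$ by sending $a'$ to the component containing $a'B$ (well-defined since $a'B$ is connected). Lifting paths in $A-a$ to walks through the relevant copies via connecting edges that never touch $(a,b)$ shows that $\phi$ is constant on each component of $A-a$. If $\phi$ takes $n\geq 2$ values, I let $H_1,\ldots,H_n$ be its fibres (each a union of components of $A-a$); then for neighbours $h_i\in H_i$ and $h_j\in H_j$ of $a$ with $i\neq j$, the ports $(a,f(h_i))$ and $(a,f(h_j))$ lie in different components of $aB-(a,b)$, which translates into the path-separation clause of (i). If instead $\phi$ is constant, then a witness to separation lives entirely inside $aB-(a,b)$, and the corresponding component of $B-b$ must be disjoint from $\{f(a') : aa'\in E(A)\}$; this is (ii) when $B-b$ is disconnected, and forces $f(a')=b$ for every neighbour $a'$ of $a$, hence (iii), when $B-b$ is connected.

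The main obstacle I anticipate is the edge case in which $f(h)=b$ for some neighbour $h$ of $a$: the connecting edge $(a,b)(h,f(a))$ is destroyed by the removal, so the port $(a,f(h))$ does not exist inside $aB-(a,b)$. I need to check that the phrasings of (i)--(iii) absorb this gracefully; in (i) the condition ``$b$ lies on every $f(h_i)f(h_j)$-path in $B$'' is vacuously satisfied whenever $f(h_i)=b$ or $f(h_j)=b$, so such neighbours impose no extra restriction and the case analysis remains tight. A subsidiary subtlety, but not a real obstacle, is that the sets $K_i$ constructed in the sufficiency argument for (i) need not themselves be connected; only their pairwise disjointness is needed to conclude that $(a,b)$ is a separating vertex.
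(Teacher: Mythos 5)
Your proof is correct and follows essentially the same route as the paper's: both exploit the copy structure of $A\t B$ (the subgraphs $a'B$ joined only through port vertices $(a,f(a'))$), partition $V(A-a)$ according to which component of $A\t B-(a,b)$ contains each fibre, and split the converse into the same three cases, with the same vacuous handling of neighbours $h$ with $f(h)=b$. The differences are only presentational: for sufficiency you exhibit explicit vertex sets with no edges to their complements where the paper argues that every relevant path must pass through $(a,b)$, and in the converse your map $\phi$ makes the disjointness of the parts $H_i$ automatic where the paper derives it from the connectivity of $vB$.
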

Theorem \ref{thm:sep} will be proven in Section \ref{sec:high}.

For even higher connectivity, we have found a sufficient condition.

\begin{thm}
	\label{thm:kc}
	Let $A,B$ be $k$-connected graphs with $k\geq 1$, and $f:V(A)\to V(B)$ a function such that for every $a\in V(A)$,
	\[|\{f(a') : aa'\in E(A)\}|\geq k.\]
	Then $A\t B$ is $k$-connected.
\end{thm}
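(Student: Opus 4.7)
The plan is to show directly that removing any vertex subset $S\subset V(A\t B)$ with $|S|<k$ leaves a connected graph; combined with $|V(A\t B)|=|V(A)|\cdot|V(B)|\geq(k+1)^2>k$, this yields $k$-connectivity. For $a\in V(A)$, write $S_a=\{b\in V(B):(a,b)\in S\}$ and $\pi(S)=\{a\in V(A):S_a\neq\emptyset\}$, so $|S_a|\leq|S|<k$ and $|\pi(S)|\leq|S|<k$. Since $B$ is $k$-connected, every fibre $aB-S_a$ is connected; since $A$ is $k$-connected and $|V(A)|\geq k+1$, the induced subgraph $A-\pi(S)$ is connected and non-empty. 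I shall call a copy $aB$ \emph{intact} when $a\notin\pi(S)$.

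I would first check that all intact copies lie in a single connected component of $A\t B-S$. Between any two intact copies one selects a path in $A-\pi(S)$ and walks along it via consecutive bridge edges: both endpoints of every such bridge edge sit in intact fibres and hence automatically avoid $S$, while within each intact copy any two vertices are connected.

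The crux is to show that every non-intact copy $aB-S_a$ (with $a\in\pi(S)$) is also joined to an intact copy through a surviving bridge edge. Concretely, I aim to produce a neighbour $a'\in N_A(a)$ with $a'\notin\pi(S)$ (so the target copy $a'B$ is intact) and $f(a')\notin S_a$ (so the endpoint $(a,f(a'))$ lies in $aB-S_a$); then $(a',f(a))\notin S$ is automatic because $a'B$ is intact. Partition $N_A(a)=\bigsqcup_{b\in F_a}N_a(b)$, where $F_a=\{f(a'):a'\in N_A(a)\}$ and $N_a(b)=\{a'\in N_A(a):f(a')=b\}$; the hypothesis gives $|F_a|\geq k$. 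At most $|S_a|$ values of $F_a$ are \emph{bad} (lying in $S_a$), so at least $|F_a|-|S_a|\geq k-|S_a|$ values are \emph{good}. A good $b$ fails precisely when $N_a(b)\subseteq\pi(S)$; since the blocks $N_a(b)$ are non-empty, disjoint, and (as $a\notin N_A(a)$) contained in $\pi(S)\setminus\{a\}$, at most $|\pi(S)\setminus\{a\}|$ good values can be fully covered.

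The main obstacle is this final counting step: the naive bound $|S_a|+|\pi(S)|\leq 2|S|$ is too weak. The correct estimate follows by splitting off the contribution of $a$ itself from $\sum_{a^*}|S_{a^*}|=|S|$, giving $|\pi(S)\setminus\{a\}|\leq|S|-|S_a|$ and hence $|S_a|+|\pi(S)\setminus\{a\}|\leq|S|<k$. Combined with the previous paragraph, the number of good, not-fully-covered values $b$ is at least $k-(|S_a|+|\pi(S)\setminus\{a\}|)>0$, which supplies the required neighbour $a'$ and closes the proof.
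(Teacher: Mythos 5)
Your proof is correct, and it takes a genuinely different route from the paper's. The paper argues by cases: $k=1$ is quoted from \cite[Proposition 2.12]{kpzz19}, $k=2$ is deduced from Theorem \ref{thm:sep} by checking that none of its three separation conditions can hold under the hypotheses, and $k\geq 3$ follows from Proposition \ref{prop:kc}, which (via the path-lifting Lemma \ref{le:path}) classifies the possible shapes of a minimal cut-set of $A\t B$: it projects to a cut of $A$, or it concentrates in a single fibre and projects to a cut of $B$, or it concentrates in a single fibre and absorbs the images of all neighbours. You instead give a single direct argument uniform in $k$: fix $S$ with $|S|<k$, note that each fibre $aB-S_a$ and the quotient $A-\pi(S)$ remain connected, glue the intact fibres together along paths of $A-\pi(S)$ using bridge edges whose endpoints automatically survive, and then attach each damaged fibre through one surviving bridge edge. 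The only delicate point --- producing a neighbour $a'$ of $a$ with both $a'\notin\pi(S)$ and $f(a')\notin S_a$ --- is handled correctly: your refined count $|S_a|+|\pi(S)\setminus\{a\}|\leq\sum_{a^*}|S_{a^*}|=|S|<k\leq|F_a|$, obtained by charging each element of $\pi(S)\setminus\{a\}$ to a distinct element of $S$ outside $aB$, is exactly what is needed, and you are right that the naive bound $2|S|$ would not close the argument. Your approach is more elementary and self-contained (no reliance on Theorem \ref{thm:sep}, Proposition \ref{prop:kc}, or the cited connectivity result), while the paper's route produces structural information about minimal cut-sets that it reuses later, for instance when the proof of Theorem \ref{thm:class} analyses the cases of Proposition \ref{prop:kc} in scenarios 2 and 5.
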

Theorem \ref{thm:kc} will be proven in Section \ref{sec:high}.

Our next main result characterises the polyhedral Sierpi\'{n}ski products of polyhedra. It combines Theorem \ref{thm:kc} with the characterisation of planar Sierpi\'{n}ski products of \cite[Theorem 2.16]{kpzz19}.

\begin{thm}
	\label{thm:siepol}
	Let $A,B$ be polyhedra and $f:V(A)\to V(B)$ a locally injective function. Then $A\t B$ is a polyhedron if and only if for every $a\in V(A)$ there exists a face $\calR_a$ of $B$ containing the vertices
	\begin{equation}	
		\label{eq:images}
		f(u_1),f(u_2),\dots,f(u_{\deg_A(a)}),
	\end{equation}
	in this cyclic order around the contour of $\calR_a$, where
	\[u_1,u_2,\dots,u_{\deg_A(a)}\]
	are the neighbours of $a$ in this cyclic order in the planar embedding of $A$, and where at least three of \eqref{eq:images} are distinct.
	%\item for every $a\in V(A)$ and $b_1,b_2\in V(B)$, the graph $B-b_1-b_2$ contains at least one element of
	%\[\{f(a'): aa'\in E(A)\}.\]
\end{thm}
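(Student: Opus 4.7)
The plan is to combine two ingredients: the planarity characterisation of Sierpi\'{n}ski products from \cite[Theorem 2.16]{kpzz19}, and Theorem \ref{thm:kc} at $k=3$. Since a polyhedron is by definition planar and $3$-connected, I need to show that under the hypotheses of Theorem \ref{thm:siepol} these two conditions separately reduce to (or are implied by) the displayed face condition.

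For planarity I would quote \cite[Theorem 2.16]{kpzz19} directly: when $A,B$ are planar and $f$ is locally injective, $A\t B$ is planar if and only if for every $a\in V(A)$ the images $f(u_1),\ldots,f(u_{\deg_A(a)})$ of the neighbours of $a$, taken in the cyclic order around $a$ given by the embedding of $A$, lie on the boundary of a common face $\calR_a$ of $B$ in the matching cyclic order. This furnishes the face part of the equivalence in both directions at once.

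For $3$-connectivity I would apply Theorem \ref{thm:kc} with $k=3$. Both $A$ and $B$ are $3$-connected as polyhedra, so I only need to verify the hypothesis $|\{f(a'):aa'\in E(A)\}|\geq 3$ for every $a\in V(A)$. This is automatic under the standing assumptions: polyhedrality of $A$ forces $\deg_A(a)\geq 3$, while local injectivity of $f$ makes the neighbours of $a$ map to pairwise distinct vertices of $B$, so the image set has cardinality exactly $\deg_A(a)\geq 3$. Consequently, whenever the face condition holds, $A\t B$ is planar by the previous paragraph and $3$-connected by Theorem \ref{thm:kc}, hence polyhedral. Conversely, if $A\t B$ is polyhedral then it is in particular planar, so the face condition follows from the kpzz19 characterisation, and the clause "at least three of \eqref{eq:images} are distinct" is automatic from the same degree-and-local-injectivity observation.

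I do not anticipate any substantial obstacle, since both ingredients are already in place; the only point worth flagging is that local injectivity together with $\deg_A(a)\geq 3$ collapses the extra hypothesis of Theorem \ref{thm:kc} into our setup, making $3$-connectivity free once planarity has been arranged.
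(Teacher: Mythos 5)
Your proposal is correct and follows essentially the same route as the paper: \cite[Theorem 2.16]{kpzz19} supplies the planarity equivalence in both directions, and Theorem \ref{thm:kc} with $k=3$ yields $3$-connectivity of the product. The only (valid) deviation is that you observe the clause ``at least three of \eqref{eq:images} are distinct'' is automatic from local injectivity together with $\delta(A)\geq 3$, whereas the paper establishes it in the forward direction by exhibiting a $2$-cut without invoking local injectivity; both arguments work under the stated hypotheses.
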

Theorem \ref{thm:siepol} will be proven in Section \ref{sec:high}.

It is possible for $A\t B$ to be a polyhedron even when neither factor is a polyhedron (e.g., Figure \ref{fig:sp}). The class of polyhedral Sierpi\'{n}ski products, or \textbf{Sierpi\'{n}ski polyhedra}, appears to be quite vast.
\begin{figure}[ht]
	\centering
	\begin{subfigure}{0.47\textwidth}
		\centering
		\includegraphics[width=3.25cm]{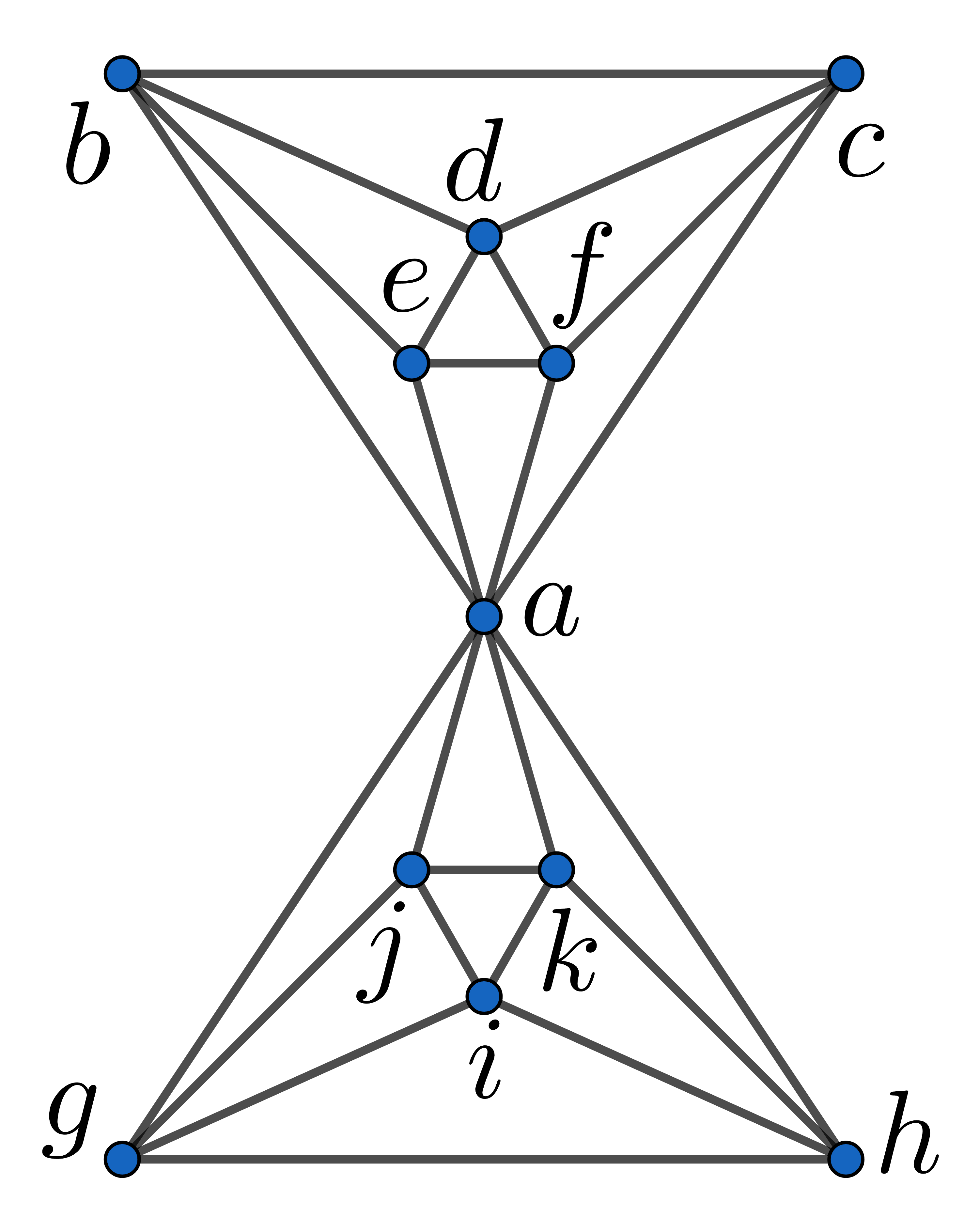}
		\caption{$A$.}
		\label{fig:sp1}
	\end{subfigure}
	\hspace{-1.5cm}
	%\hfill
	\begin{subfigure}{0.47\textwidth}
		\centering
		\includegraphics[width=3.cm]{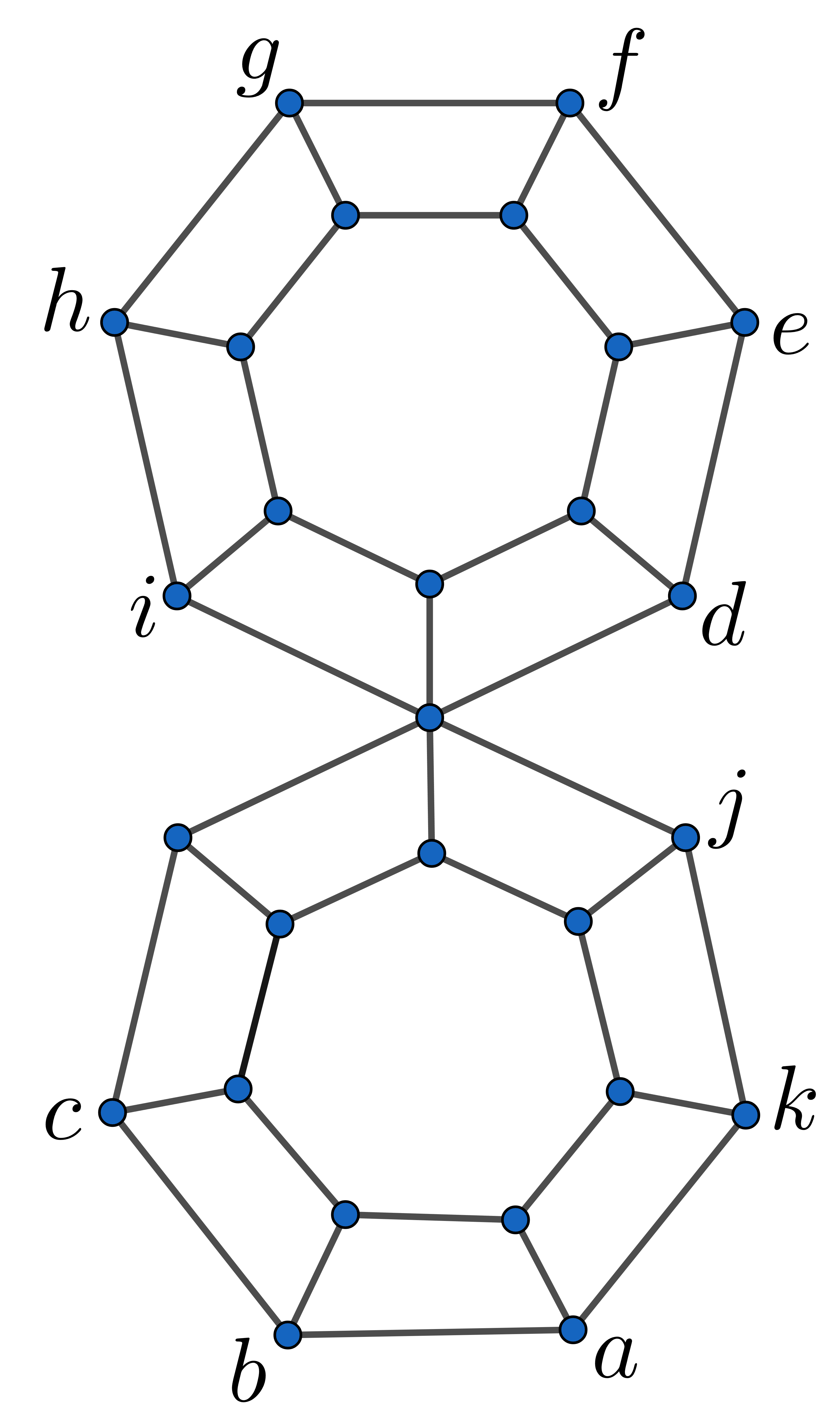}
		\caption{$B$.}
		\label{fig:sp2}
	\end{subfigure}
	\caption{$A,B$ are of connectivity $1$, while the Sierpi\'{n}ski product $A\s B$ is a polyhedron.}
	\label{fig:sp}
\end{figure}

A notable subclass of Sierpi\'{n}ski polyhedra are those that are also regular graphs. Polyhedra that are regular graphs are called vertex-regular polyhedra. Their duals are the face-regular polyhedra. For instance, the maximal planar graphs are the $3$-face-regular polyhedra. Regular polyhedra were already of interest to the ancient Greeks (e.g., Platonic solids). For recent results on regular polyhedra see e.g., \cite{brin05,hasheminezhad2011recursive,de2024cancellation,maffucci2025classification}.

We now focus on the \textbf{regular Sierpi\'{n}ski polyhedra}, and more generally on the regular, connected, planar Sierpi\'{n}ski products. We have found a complete classification for these classes. The statement requires a few definitions and the proof involves several considerations. The first factor is always a member of a specific class $\ca(n_1,n_2,\dots,n_k)$ of regular, connected, planar graphs that we will now define. This class is of significant interest in its own right. We have dedicated Section \ref{sec:A} to the theory of $\ca(n_1,n_2,\dots,n_k)$, focusing on its fine properties, subclasses, and partial classification.% This theory is useful to classify the regular planar Sierpi\'{n}ski products, and is also of independent interest.

\begin{defin}
	\label{def:A}
	Let $1\leq k\leq 5$ and
	\begin{equation}
		\label{eq:colord}
		n_1\geq n_2\geq\dots\geq n_k\geq 1
	\end{equation}
	be integers. We denote by
	\[\ca(n_1,n_2,\dots,n_k)\]
	the class of connected, planar, $n_1+n_2+\dots+n_k$-regular graphs $A$ satisfying the following condition. There exists a planar embedding of $A$ such that each vertex may be assigned one of five colours labelled by
	\begin{equation}
	\label{eq:col}
	c_1=\text{`red'},\quad c_2=\text{`blue'},\quad c_3=\text{`green'},\quad c_4=\text{`black'},\quad c_5=\text{`yellow'},
	\end{equation}
	in such a way that for every $1\leq i\leq k$, each $a\in V(A)$ has exactly $n_i$ neighbours of the colour $c_i$.%, and moreover for every $a\in V(A)$ the cyclic order of the colours of the neighbours arranged around $a$ is
	%\[\underbrace{c_1,c_1,\dots,c_1}_{n_1},\underbrace{c_2,c_2,\dots,c_2}_{n_2},\dots,\underbrace{c_k,c_k,\dots,c_k}_{n_k}.\]
\end{defin}

Figure \ref{fig:A} depicts elements of $\ca(1,1,1)$, $\ca(2,1,1)$, and $\ca(2,2)$. Note that if in Figure \ref{fig:A211} we replace all the labels 3 with 2, we obtain a member of $\ca(2,2)$. On the other hand, the graph in Figure \ref{fig:A22} has no labeling that would allow us to obtain a member of $\ca(2,1,1)$. This remark will be generalised in Section \ref{sec:A}.
	\begin{figure}[ht]
		\centering
		\begin{subfigure}{0.32\textwidth}
			\centering
			\includegraphics[width=3.cm]{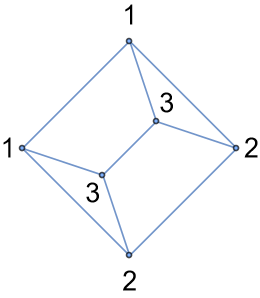}
			\caption{A member of $\ca(1,1,1)$.}
			\label{fig:A111}
		\end{subfigure}
		\hfill
		\begin{subfigure}{0.32\textwidth}
			\centering
			\includegraphics[width=3.75cm]{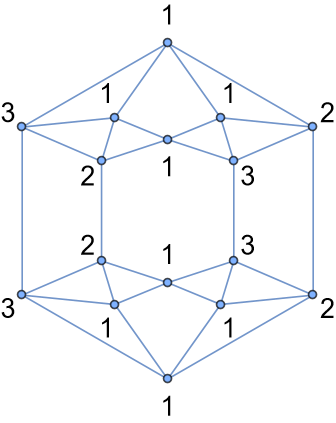}
			\caption{A member of $\ca(2,1,1)$.}
			\label{fig:A211}
		\end{subfigure}
		\hfill
		\begin{subfigure}{0.32\textwidth}
			\centering
			\includegraphics[width=3.75cm]{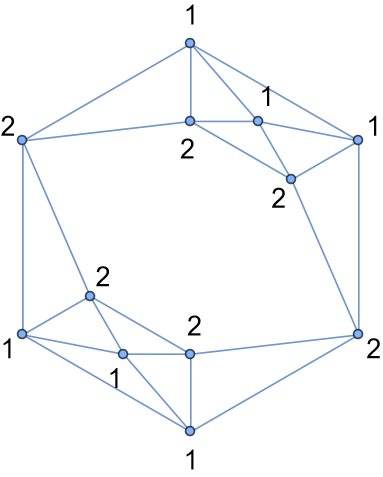}
			\caption{A member of $\ca(2,2)$.}
			\label{fig:A22}
		\end{subfigure}
		\caption{Labels represent colours of vertices.}
		\label{fig:A}
	\end{figure}

The second factor of a polyhedral Sierpi\'{n}ski product always belongs to the following class of graphs.

\begin{defin}
	\label{def:B}
	Let $2\leq r\leq 5$, $1\leq k\leq 3$, and $n_1\geq n_2\geq\dots\geq n_k\geq 1$. We denote by
	\[\cb_r(n_1,n_2,\dots,n_k)\]
	the class of connected planar graphs $B$ with degree sequence
	\[\sigma(B): r,r,\dots,r,r-n_k,\dots,r-n_2,r-n_1,\]
	where for $1\leq i\leq k$ the vertex of degree $r-n_i$ is assigned the colour $c_i$ \eqref{eq:col}, and admitting a planar embedding where all the vertices of non-maximal degree lie on the same region.
\end{defin}

Note that $K_2\in\cb_3(2,2)$ and $K_3\in\cb_3(1,1,1)$.

\begin{defin}
	\label{def:col}
	Let $A\in\ca(n_1,n_2,\dots,n_k)$, $B\in\cb_r(n_1,n_2,\dots,n_k)$, and \[f:V(A)\to V(B).\]	
	We say that $f$ preserves colours if it maps each vertex of $A$ to the vertex of $B$ of the same colour.
\end{defin}

To state our classification theorem, we need a few further definitions.

\begin{defin}
	\label{def:hash}
	Let $\ca_\#(2,1,1)$ be the subclass of graphs $A\in\ca(2,1,1)$ such that for every $2$-cut
	\[\{a_1,a_2\}\]
	the graph $A-a_1-a_2$ has exactly two connected components, each containing two neighbours of $a_i$, exactly one of them being a red vertex, for at least one of $i=1,2$.
\end{defin}
%Note that Definition \ref{def:hash} does not require the $2$-cut to be minimal i.e., possibly $a_1=a_2$. Furthermore, 
Note that $3$-connected members of $\ca(2,1,1)$ such as the graph in Figure \ref{fig:A211} clearly belong to $\ca_\#(2,1,1)$.

\begin{defin}
	\label{def:b*}
	Let $\cb_r^*(n_1,n_2,n_3)$ be the subclass of graphs $B\in\cb_r(n_1,n_2,n_3)$ such that if $W$ is an $h$-cut, $h\in\{1,2\}$, then each connected component $J$ of $B-W$ verifies
	\begin{equation}
		\label{eq:hsum}
		\sum_{v\in V(J)}(r-\deg_B(v))\geq 3-h.
	\end{equation}
\end{defin}

We are ready to state our complete classification for the class of regular polyhedral Sierpi\'{n}ski products.

\begin{thm}
	\label{thm:class}
	Let $A,B$ be non-trivial graphs and $f: V(A)\to V(B)$ a function. Then $A\t B$ is an $r$-regular polyhedron if and only if we are in one of the six scenarios in Table \ref{t:1}, and $f$ preserves colours.
	\begin{table}[h!]
		\centering
		\Large
		$\begin{array}{|l|ll|l|c|}
			\hline r&A&&B&\text{scenario}\\
			\hline 3&\ca(2,2),&3\text{-connected}&K_2&1\\
			\hline 3&\ca(1,1,1),&3\text{-connected}&\cb_3^*(1,1,1)&2\\
			\hline 3&\ca_\#(2,1,1),&2\text{-connected}&\cb_3^*(2,1,1)&3\\
			\hline 4&\ca_\#(2,1,1),&2\text{-connected}&\cb_4^*(2,1,1)&4\\
			\hline 5&\ca(1,1,1),&3\text{-connected}&\cb_5^*(1,1,1)&5\\
			\hline 5&\ca_\#(2,1,1),&2\text{-connected}&\cb_5^*(2,1,1)&6\\
			\hline
		\end{array}$
		\caption{The six scenarios for $A,B,f$ such that $A\t B$ is a regular polyhedron.}
		\label{t:1}
	\end{table}
\end{thm}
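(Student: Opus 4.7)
The plan is to prove both directions of the equivalence by combining Theorems \ref{thm:sep}, \ref{thm:kc}, and \ref{thm:siepol} with a careful degree analysis.

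For the sufficiency direction, assume we are in one of the six scenarios of Table \ref{t:1}. Regularity follows by a routine degree count: for $(a,b)\in V(A\t B)$, the degree equals $\deg_B(b)+|\{a' : aa'\in E(A),\ f(a')=b\}|$, and since $f$ preserves colours, a coloured vertex $b_i\in V(B)$ of degree $r-n_i$ is hit exactly $n_i$ times at each $a$ (by the defining property of $\ca(n_1,\dots,n_k)$), giving total degree $r$; uncoloured vertices already have $B$-degree $r$ and are not hit by any cross-edge. Planarity is obtained by taking a planar embedding of $A$ realising the colour-cyclic-order property, replacing each $a\in V(A)$ with a planar embedding of $B$ having the coloured vertices on the outer face (guaranteed by Definition \ref{def:B}), and orienting each copy so that its coloured vertices appear in the same cyclic order as the corresponding neighbour-colours of $a$ in $A$; each cross-edge then joins two outer-face coloured vertices of adjacent copies without crossings, producing a planar embedding of $A\t B$. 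For $3$-connectedness, I would invoke Theorem \ref{thm:sep} to rule out separating vertices and then case-analyse potential $2$-cuts; the technical conditions $\ca_\#(2,1,1)$ and $\cb_r^*$ are engineered precisely to block the remaining obstructions in the scenarios where one of the factors has connectivity only $2$.

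For the necessity direction, assume $A\t B$ is an $r$-regular polyhedron. Contracting each copy $aB$ to a single vertex recovers $A$, so $A$ is a minor of $A\t B$ and hence planar; similarly $B$ is a subgraph of $A\t B$ and hence planar. The regularity condition at $(a,b)$ forces $\deg_B(b)+|f^{-1}(b)\cap N_A(a)|=r$ for every $a,b$, from which $|f^{-1}(b)\cap N_A(a)|$ must depend only on $b$. Grouping the image vertices by these common multiplicities $n_1\geq\dots\geq n_k\geq 1$ and colouring each $a\in V(A)$ by the colour of $f(a)$ gives the degree part of Definition \ref{def:A}; the cyclic-order part follows from Theorem \ref{thm:siepol}, which also places the coloured vertices of $B$ on a common face, yielding $B\in\cb_r(n_1,\dots,n_k)$. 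Planarity of $B$ together with the existence of an uncoloured degree-$r$ vertex (or direct analysis in scenario $1$) forces $r\leq 5$, and enumerating the tuples $(r,n_1,\dots,n_k)$ compatible with both $r$-regularity and the existence of such $A,B$ leaves exactly the six rows of Table \ref{t:1}. Finally, Theorem \ref{thm:sep} together with a direct analysis of potential $2$-cuts in the product forces $A\in\ca_\#(2,1,1)$ whenever $A\in\ca(2,1,1)$, and $B\in\cb_r^*$ in the corresponding scenarios.

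The main obstacle is controlling how $2$-cuts of $A\t B$ can arise: they can lie entirely inside a single copy $aB$, or consist of a mixed pair from two different copies, or straddle a $2$-separation of $A$ together with a vertex obstruction of $B$. Disentangling these cases and matching each type to a combinatorial obstruction on one of the factors is what motivates Definitions \ref{def:hash} and \ref{def:b*}; the delicate bookkeeping required to show that these two refinements together close off every channel through which a $2$-cut in the product could appear is where I expect the argument to be most technical.
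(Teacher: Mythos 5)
Your overall architecture matches the paper's: regularity forces the colour structure $A\in\ca(n_1,\dots,n_k)$, $B\in\cb_r(n_1,\dots,n_k)$ with $f$ colour-preserving (the paper's Proposition \ref{prop:col}), a planar embedding is built by placing copies of $B$ with coloured vertices on the outer face, and the remaining work is a degree-sequence enumeration plus a $2$-cut analysis. However, there are two genuine gaps. First, your claim that ``enumerating the tuples $(r,n_1,\dots,n_k)$ \dots leaves exactly the six rows of Table \ref{t:1}'' is not a routine count: the degree-sequence analysis for $r=5$ produces the candidate tuples $(2,2,1)$ and $(3,1,1)$ (and a priori one must also consider $k=4,5$), and these are eliminated only because $\ca(2,2,1)$, $\ca(3,1,1)$, $\ca(3,2)$ are empty (Proposition \ref{prop:A32}, a non-trivial infinite-descent argument) and because $\ca(2,1,1,1)$, $\ca(1,1,1,1)$, $\ca(1,1,1,1,1)$ are empty (Proposition \ref{prop:emptya}). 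Without these structural emptiness results the enumeration does not close, and your proposal gives no substitute for them. One also needs $|\ii(f)|\geq 3$ when $B\not\simeq K_2$ (Corollary \ref{cor:imf}) to pin down $k=3$, which you do not address.

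Second, in the necessity direction you invoke Theorem \ref{thm:siepol} to obtain the cyclic order and to place the coloured vertices of $B$ on a common face, but that theorem assumes $A,B$ are polyhedra and $f$ is locally injective --- neither holds in general here ($B$ is not $3$-connected in most scenarios, and $f$ fails local injectivity in scenarios 1, 3, 4, 6). The paper instead derives the common-face property directly inside Proposition \ref{prop:col} via a separating-cycle argument, and the cyclic-order statement is not part of Definition \ref{def:A} at all (it is deduced separately in Lemma \ref{le:cy}). Finally, the $3$-connectivity analysis in both directions --- precisely where you say you ``expect the argument to be most technical'' --- is left entirely unexecuted; the paper's Lemmas \ref{le:br} and \ref{le:3conn} (for the $\ca(2,1,1)$ scenarios), Lemma \ref{le:A22conn} (for scenario 1), and the argument via Proposition \ref{prop:kc} and \eqref{eq:hsum} (for scenarios 2 and 5) carry real content that the proposal only gestures at.
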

Theorem \ref{thm:class} will be proven in Section \ref{sec:class} also applying the results of Sections \ref{sec:high} and \ref{sec:A}. Constructions for $\cb_r^*(1,1,1)$ and $\cb_r^*(2,1,1)$ may be found in Appendix \ref{sec:appa}.

Note that in scenarios 2 and 5, $f$ is locally injective, since in $A$ every vertex has exactly one neighbour of each colour $c_1,c_2,c_3$. On the other hand, in the remaining scenarios $f$ is not locally injective.

\paragraph{Plan of the paper.} The rest of this paper is organised as follows. In Section \ref{sec:lexi} we will classify the planar lexicographic products (and polyhedral lexicographic products). In Section \ref{sec:bas} we will present a few preliminary results on Sierpi\'{n}ski polyhedra, and classify the special polyhedral products of type $A\s A$.
\\
The main work is contained in Sections \ref{sec:high}, \ref{sec:A}, and \ref{sec:class}. In Section \ref{sec:high}, we will focus on the higher connectivity of Sierpi\'{n}ski products, and prove Theorems \ref{thm:sep}, \ref{thm:kc}, and \ref{thm:siepol}. Section \ref{sec:A} is dedicated to the fine properties of the graphs $\ca(n_1,n_2,\dots,n_k)$ of Definition \ref{def:A}. In Section \ref{sec:class}, we will complete the proof of Theorem \ref{thm:class}.
\\
Section \ref{sec:fam} characterises certain interesting classes of Sierpi\'{n}ski polyhedra (dropping the assumption of regularity). In Appendix \ref{sec:appa}, we deal with further properties and constructions for the classes $\cb_r^*(1,1,1)$ and $\cb_r^*(2,1,1)$ that appear in Theorem \ref{thm:class}. Appendix \ref{sec:appb} contains the classification of the wider class of regular, planar Sierpi\'{n}ski products.

\paragraph{Related results.}
The characterisation of polyhedral Kronecker products was established recently in \cite{maffucci2024classification}. One factor is always $K_2$. When the second factor is a planar graph, \cite[Theorems 1.3 and 1.4]{maffucci2024classification} give a precise description of the four types of polyhedral products, according to the number of odd faces and how they intersect. When the second factor is non-planar, \cite[Theorem 1.7]{maffucci2024classification} prescribes how to construct the corresponding products. Polyhedral Kronecker products were further investigated in \cite{de2024cancellation}. Remarkably, \cite[Theorem 1.10]{de2024cancellation} solves the question of Kronecker cancellation in the special case of planar, $3$-connected product. More precisely, a polyhedron is a Kronecker product in at most one way. This is not true in general for simple graphs e.g., the Desargues graph is a Kronecker product in two distinct ways \cite{imrich2008multiple}. There are two families of polyhedral Cartesian products: the stacked prisms (products of a cycle and a simple path) and the prisms of outerplanar, $2$-connected graphs (that is to say, the products of an outerplanar, $2$ connected graph -- i.e., a polygon with some added diagonals -- and $K_2$) \cite{behzad1969topological}, \cite[Proposition 1.9]{maffucci2024classification}. There are only two polyhedral strong products \cite{jha1993note}, \cite[Proposition 1.10]{maffucci2024classification}.

\subsection{Acknowledgements}
Riccardo W. Maffucci was partially supported by Programme for Young Researchers `Rita Levi Montalcini' PGR21DPCWZ \textit{Discrete and Probabilistic Methods in Mathematics with Applications}, awarded to Riccardo W. Maffucci.

\section{Planar lexicographic products}
\label{sec:lexi}

In this section, we completely classify the planar lexicographic products and polyhedral lexicographic products.

\begin{prop}
Let $A\c B$ be a planar graph, with $B$ non-trivial and $A$ non-empty. Then either $B\simeq\overline{K_2}$ and every connected component of $A$ is either a triangle or a tree, or $B\simeq K_2$ and every connected component of $A$ is a tree.
\\
Furthermore, the only polyhedral lexicographic products are the tetrahedon $K_4\simeq K_2\c K_2$ and the octahedron $K_3\c\overline{K_2}$.
\end{prop}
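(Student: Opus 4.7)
The plan is to first constrain $|V(B)|$ via an immediate application of Kuratowski, then for each admissible $B$ characterise $A$ by forbidden subgraphs, and finally specialise to the $3$-connected case. The key starting observation is that for any edge $uv\in E(A)$, the subgraph of $A\c B$ induced on $\{u,v\}\times V(B)$ is the complete bipartite graph $K_{|V(B)|,|V(B)|}$, since by definition of the lexicographic product every pair $(u,b_1)(v,b_2)$ with $b_1,b_2\in V(B)$ is adjacent. As $A$ is non-empty such an edge exists, and planarity of $A\c B$ forces $|V(B)|\le 2$; combined with the non-triviality of $B$, either $B\simeq\overline{K_2}$ or $B\simeq K_2$.

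For $B\simeq\overline{K_2}$ I would split into triangle-free and triangle-containing cases. Any triangle in $A\c\overline{K_2}$ must involve three distinct first coordinates (since no intra-copy edges exist), so the product is triangle-free whenever $A$ is; Euler's bound for planar graphs of girth $\ge 4$ then gives $4|E(A)|=|E(A\c\overline{K_2})|\le 2\cdot 2|V(A)|-4$, forcing $|E(A)|\le |V(A)|-1$, so $A$ is a forest. When $A$ contains a triangle $T=abc$, the subgraph on $V(T)\times V(\overline{K_2})$ is the octahedron $K_{2,2,2}$; this is $3$-connected, hence has a unique embedding by Whitney's theorem, and its eight triangular faces each contain exactly one vertex from each antipodal pair, so $a_0$ and $a_1$ share no face. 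No vertex of $A$ outside $T$ can therefore be adjacent in $A$ to any vertex of $T$ (the corresponding two copies could not simultaneously attach to both $a_0$ and $a_1$ in the plane), so $T$ is a full connected component. The remaining triangle-free part is a forest by the first case.

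For $B\simeq K_2$ the aim is to rule out any cycle in $A$. A triangle in $A$ yields $K_6$ on its six copies, non-planar. A cycle $C_k\subseteq A$ with $k\in\{4,5\}$ already fails Euler's bound, since $|E(C_k\c K_2)|=5k>3\cdot 2k-6$. For $k\ge 6$, I would exhibit a $K_{3,3}$ subdivision in $C_k\c K_2$: pick three vertices $u,v,w$ on the cycle separating it into three arcs of length $\ge 2$, take branch vertices $\{u_0,v_0,w_0\}$ and $\{u_1,v_1,w_1\}$, use the rungs $u_0u_1$, $v_0v_1$, $w_0w_1$ for the three same-index connections, and route the six remaining connections along the three arcs with internal vertices alternating between the two layers so that all nine paths are internally disjoint. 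Hence every connected component of $A$ is a tree.

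The converse and the polyhedral classification follow by an induction on $|V(A)|$: adding a leaf $w$ attached to $u$ amounts to placing $w_0,w_1$ inside a face containing both $u_0$ and $u_1$, and this cofacial invariant is maintained at every step; triangle components give octahedra, themselves planar, and disjoint unions of planar graphs are planar. For the polyhedral statement, $3$-connectivity forces $A$ connected, and any non-leaf vertex $u\in V(A)$ yields a separating pair $\{u_0,u_1\}$ in $A\c B$ (disconnecting the two sides of $A-u$), so only $A\in\{K_2,K_3\}$ survive. Direct computation gives $K_2\c K_2=K_4$ (tetrahedron) and $K_3\c\overline{K_2}=K_{2,2,2}$ (octahedron), while $K_2\c\overline{K_2}=C_4$, $K_3\c K_2=K_6$ and the $K_1\c B$ cases fail for connectivity or planarity reasons. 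The main obstacle I expect is the octahedron rigidity step in the $B\simeq\overline{K_2}$ case, since the remaining work reduces to edge-count inequalities and explicit Kuratowski subdivisions.
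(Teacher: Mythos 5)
Your proposal is correct in its overall architecture and reaches the same classification, but several of the key steps are argued quite differently from the paper. The paper also begins by forcing $|V(B)|=2$ via a $K_{3,3}$ inside $\{a_1,a_2\}\times V(B)$, but it then handles cycles by exhibiting an explicit $K_{3,3}$-minor in $C_n\c\overline{K_2}$ for $n\geq 4$ and excludes anything attached to a triangle by checking that the product of the single forbidden subgraph $\{ca,cb,cd,ab\}$ with $\overline{K_2}$ is non-planar; your route replaces the first of these with the girth-$4$ Euler bound and the second with a rigidity argument (Whitney uniqueness of the octahedron's embedding plus the fact that antipodal vertices are never cofacial). Both of your substitutes are sound, and the octahedron-rigidity step is arguably cleaner than a forbidden-subgraph check since it isolates triangle components in one stroke. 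For $B\simeq K_2$ you work directly ($K_6$ for triangles, Euler for $C_4,C_5$, an explicit $K_{3,3}$-subdivision with two-layer routing for $C_k$, $k\geq 6$ -- the routing you describe, one layer per direction along each arc, does give nine internally disjoint paths), whereas the paper simply uses that $A\c\overline{K_2}$ is a spanning subgraph of $A\c K_2$; your detour is correct but avoidable. The polyhedral endgame (internal tree vertices give the $2$-cut $\{u_0,u_1\}$, then finite case check) coincides with the paper's.

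Two points need tightening. First, in the triangle-free case you deduce ``$|E(A)|\leq|V(A)|-1$, so $A$ is a forest'': that implication is false for disconnected graphs (e.g.\ $C_4\sqcup K_1$ satisfies the edge count), so you must apply the girth bound to a connected component containing a cycle, or more simply to the subgraph $C_k\c\overline{K_2}$ itself, which has $4k$ edges on $2k$ vertices and violates $|E|\leq 2|V|-4$ for every $k\geq 4$. Second, in the converse direction the assertion that ``this cofacial invariant is maintained at every step'' is not automatic: placing $w_0,w_1$ inside a face $F$ containing $u_0,u_1$ can separate $v_0$ from $v_1$ for another vertex $v$ whose only common face was $F$. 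The paper sidesteps this by processing the tree root-down and inserting all children of a vertex at once into the region between the two copies of that vertex; your induction needs either that ordering or an argument that a safe face can always be chosen.
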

\begin{proof}
Suppose that $A\c B$ is a planar graph, with $B$ non-trivial and $A$ non-empty. Let us check that $B$ contains exactly two vertices. By contradiction, let $b_1,b_2,b_3\in V(B)$. As $A$ is non-empty, we may take $a_1a_2\in E(A)$. Then the set of vertices
\[\{(a_i,b_j) : \quad i=1,2, \quad j=1,2,3\}\]
induces a copy of $K(3,3)$ in $A\c B$, contradiction. Thus indeed $B$ has exactly two vertices, hence it is isomorphic to either $\overline{K_2}$ or $K_2$.

We note that $A\c\overline{K_2}$ is a (spanning) subgraph of $A\c K_2$. We check that $K_3\c\overline{K_2}$ is the octahedron, while $K_3\c K_2$ is non-planar.

Next, let us show that for $n\geq 4$, the product $C_n\c\overline{K_2}$ is non-planar. Letting $V(\overline{K_2})=\{x,y\}$ and
\[C_n=\{u_1,u_2,\dots,u_n\}\]
as usual, we take the partition
\begin{equation}
\label{eq:part}
\{\{u_1x,u_1y,u_3x\}, \ \{u_2x,u_2y,u_4y\}\}.
\end{equation}
In $C_n\c\overline{K_2}$, both of $u_2x$ and $u_2y$ are adjacent to all of $u_1x,u_1y,u_3x$, and moreover $u_4y$ is adjacent to $u_3x$. We consider the paths
\[u_4y,u_5y,\dots,u_ny,u_1y\]
and 
\[u_4y,u_5x,\dots,u_nx,u_1x\]
to conclude that the partition \eqref{eq:part} determines a $K(3,3)$-minor in $C_n\c\overline{K_2}$. It follows that each cycle of $A$ is a triangle.

We now check that $A$ cannot contain as subgraph the graph of edges
\[\{ca,cb,cd,ab\},\]
as its lexicographic product with $\overline{K_2}$ is non-planar. Therefore, every connected component of $A$ is either a triangle or a tree.

We have already checked that $K_3\c\overline{K_2}$ is the octahedron, while $K_3\c K_2$ is non-planar. We now show by iterative construction that if $A$ is a tree, then $A\c B$ is planar for $B\in\{\overline{K_2},K_2\}$. The base case is a star of edges
\[\{ca_1,ca_2,\dots,ca_n\}\]
where the central vertex $c$ is taken as the root. For the products, we sketch the vertices
\begin{equation}
\label{eq:line}
a_1x,a_1y,a_2x,a_2y,\dots,a_nx,a_ny
\end{equation}
in this order along a vertical line, and $cx,cy$ on either side of this line. We add the edges from $cx$ to all of \eqref{eq:line}, and from $cy$ to all of \eqref{eq:line}. We have obtained a planar embedding of the lexicographic product of the star and $\overline{K_2}$. If instead $B$ is $K_2$, note that we can add the edges
\[\{(bx,by) \ : \ b\in V(A)\}\]
without breaking planarity, as in Figure \ref{fig:le1}. Continuing the iterative construction of $A$, for the inductive step we add the neighbours of $a_1$ distinct from $c$. Now $a_1$ is considered to be the root of a subtree, a star with centre $a_1$ and remaining vertices
\[b_1,b_2,\dots,b_{\deg_A(a_1)-1},\]
say. In the product $A\c B$, this time the vertices
\[b_1x,b_1y,b_2x,b_2y,\dots,b_{\deg_A(a_1)-1}x,b_{\deg_A(a_1)-1}y\]
are sketched in this order along a horizontal line between $ax,ay$. The resulting product $A\c B$ remains planar  (Figure \ref{fig:le2}). One continues in this fashion until all the vertices and edges of the tree $A$ have been added, thus $A\c B$ is planar. The proof of the first statement of the present proposition is complete.
	\begin{figure}[ht]
	\centering
	\begin{subfigure}{0.47\textwidth}
		\centering
		\includegraphics[width=4.75cm]{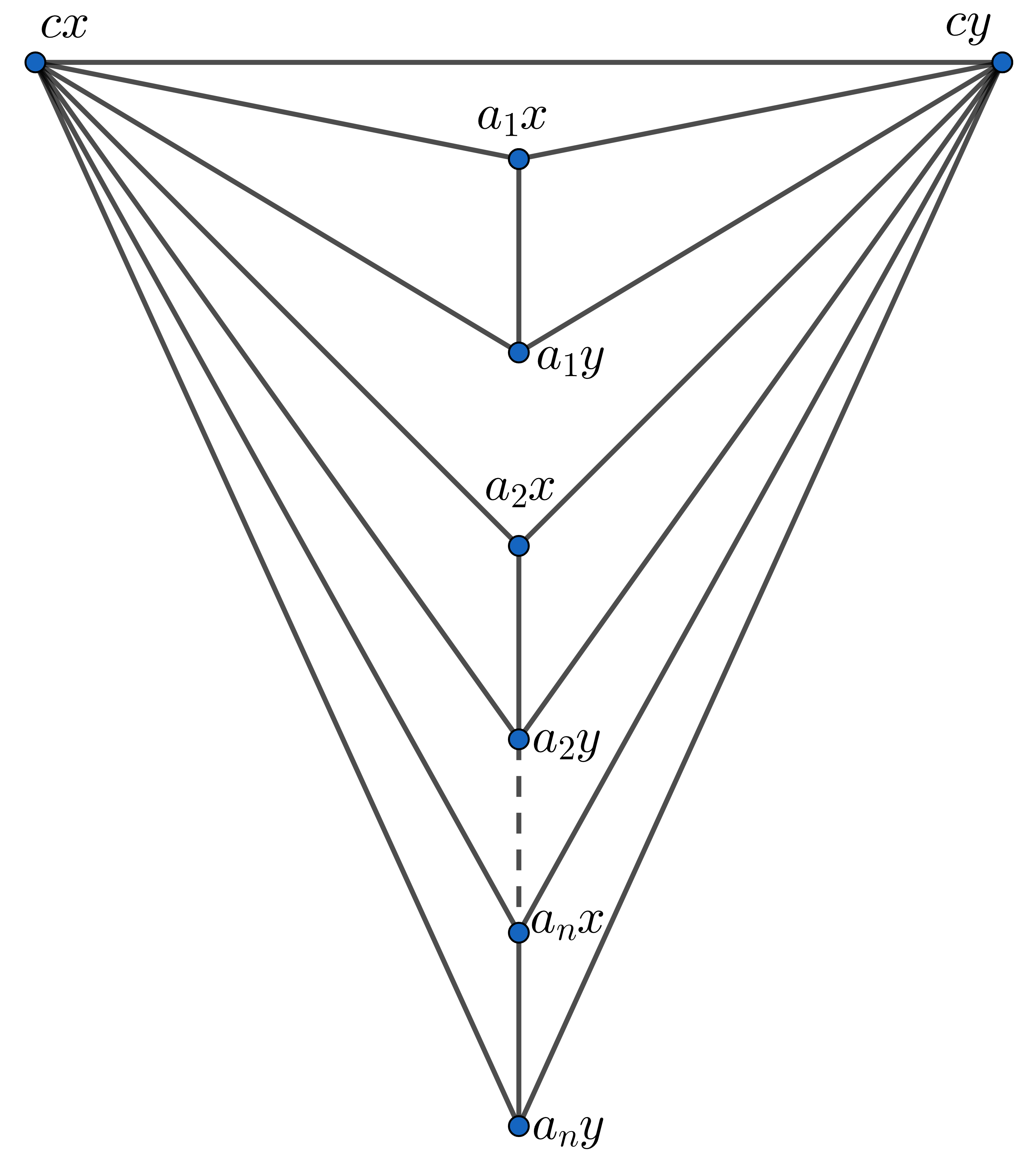}
		\caption{}
		\label{fig:le1}
	\end{subfigure}
	\hfill
	\begin{subfigure}{0.47\textwidth}
		\centering
		\includegraphics[width=4.75cm]{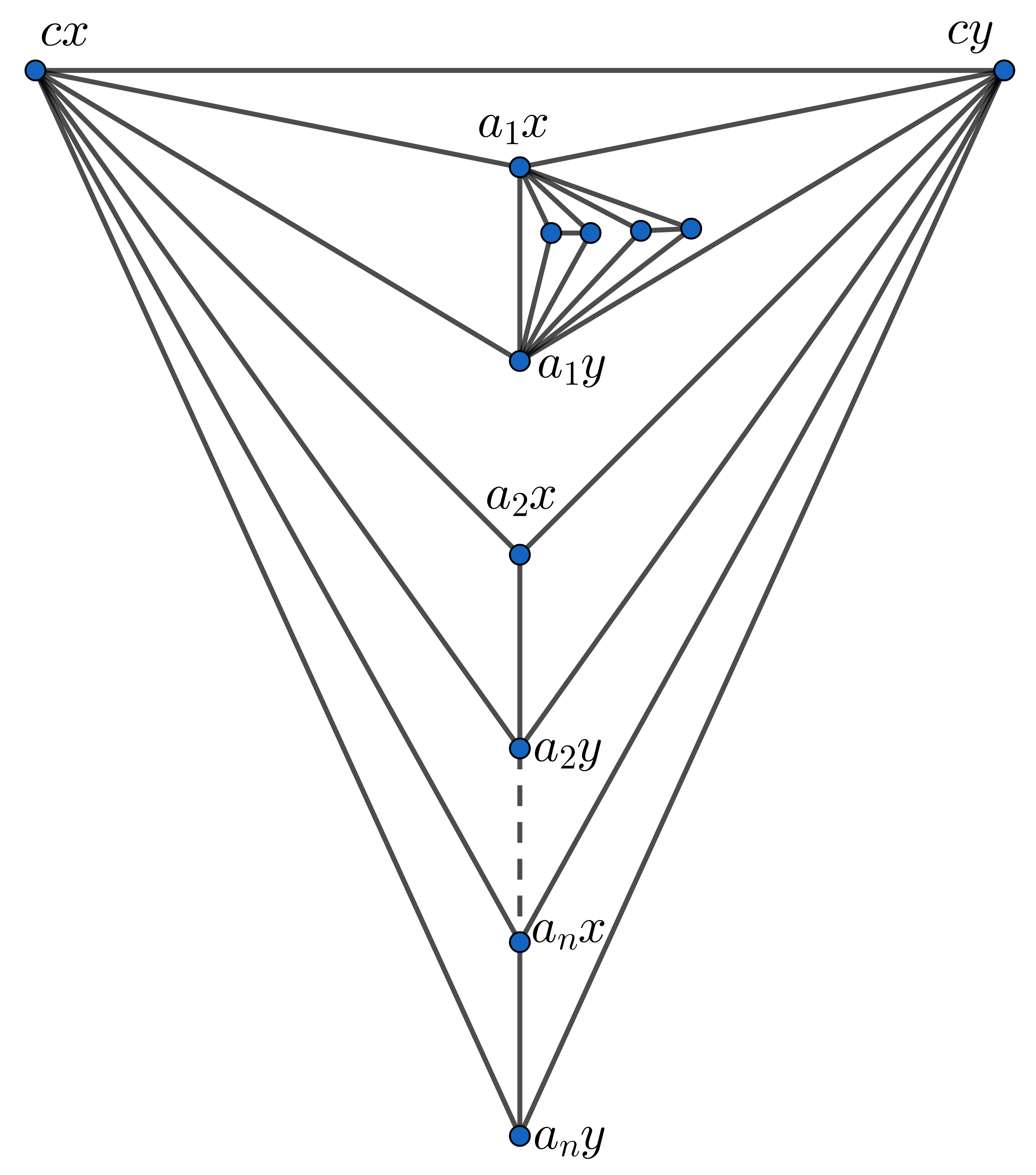}
		\caption{}
		\label{fig:le2}
	\end{subfigure}
	\caption{Construction of the planar lexicographic products.}
	\label{fig:le}
\end{figure}

We now assume that $A\c B$ is a polyhedron. By the first part of the present proposition, $B$ is either $\overline{K_2}$ or $K_2$. Moreover, since $A\c B$ is connected, by \cite[Corollary 5.14]{haimkl} $A$ is connected. By the first part of the present proposition, we conclude that $A$ is either a triangle or a tree. We already know that $K_3\c\overline{K_2}$ is the octahedron and that $K_3\c K_2$ is non-planar. Now let $A$ be a tree, $a\in V(A)$ of degree $1$, $c$ the only neighbour of $a$ in $A$, and $V(B)=\{x,y\}$. If $A$ has at least three vertices, then
\[\{cx,cy\}\]
is a $2$-cut in $A\c B$. It follows that the tetrahedron and octahedron are indeed the only polyhedral lexicographic products, as claimed.
\end{proof}

\section{Basics of polyhedral Sierpi\'{n}ski products}
\label{sec:bas}

In this section, we prove a few useful properties of polyhedral Sierpi\'{n}ski products, and classify those of type $A\s A$.

\begin{lemma}
	\label{le:deg3}
Let $A,B$ be non-trivial graphs and $f$ a function, such that $A\t B$ is a polyhedron. Then $A,B$ are planar and connected, $\delta(A)\geq 3$, and $|\ii(f)|\geq 2$. Moreover if $B=K_2$, then $\delta(A)\geq 4$.
%Let $A$ be a non-trivial graph, and $B$ a connected graph with at least $3$ vertices. Then $A\c B$ is a polyhedron if and only if $A\simeq K_4$, $B$ is a polyhedron, $|\Ima(f)|=4$ ($f$ is injective), and the four vertices in $\Ima(f)$ lie on the same face of $B$.
\end{lemma}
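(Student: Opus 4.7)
The plan is to establish the six conclusions in turn by exploiting the description of $A\t B$ as the union of the copies $aB$, $a\in V(A)$, connected by the ``bridge'' edges $(a_1,f(a_2))(a_2,f(a_1))$ for $a_1a_2\in E(A)$. Connectedness of $A$ and $B$ follows immediately from \cite[Proposition 2.12]{kpzz19}, since the polyhedron $A\t B$ is connected. For planarity, each copy $aB$ is a subgraph of $A\t B$ isomorphic to $B$, so $B$ is planar; and contracting each (connected) copy $aB$ to a single vertex exhibits $A$ as a minor of $A\t B$, so $A$ is planar too.

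The main work is to show $\delta(A)\geq 3$. I would argue by contradiction: suppose some $a\in V(A)$ has $\deg_A(a)=:d\leq 2$, and consider the \emph{exit set}
\[S:=\{(a,f(a')):aa'\in E(A)\}\subseteq V(aB),\qquad |S|\leq d\leq 2,\]
which contains every vertex of $aB$ incident to a bridge edge. The plan is to split on whether $|S|<|V(B)|$ or $|S|=|V(B)|$. In the first case, $S$ is a cut of size at most $2$ separating the non-empty set $V(aB)\setminus S$ from the remaining copies, contradicting $3$-connectivity. In the second case, $d=|V(B)|=2$ is forced and hence $B=K_2$; the exit vertex targeted by exactly one neighbour of $a$ then has degree $\deg_B+1=2$ in $A\t B$, again violating $\delta(A\t B)\geq 3$.

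For $|\ii(f)|\geq 2$, I would suppose by contradiction that $f\equiv b_0$ is constant; then every bridge edge has the form $(a_1,b_0)(a_2,b_0)$, so for each $a\in V(A)$ the vertex $(a,b_0)$ is the unique exit point of $aB$, and removing it isolates the non-empty set $V(aB)\setminus\{(a,b_0)\}$ from the remaining copies. Hence $(a,b_0)$ is a separating vertex, contradiction. For the $B=K_2$ sharpening, assume $\deg_A(a)=3$ with $V(B)=\{x,y\}$: either the three images $f(a_1),f(a_2),f(a_3)$ coincide (whereupon one of $(a,x),(a,y)$ is isolated by removing the other), or they form a multiset with a minority image, whose corresponding vertex in $aB$ has degree $1+1=2$ in $A\t B$. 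Each alternative contradicts the polyhedral hypothesis, so $\delta(A)\geq 4$.

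The only point requiring any real care is the degenerate subcase of the $\delta(A)\geq 3$ argument (and its analogue for $B=K_2$) where the exit set exhausts $V(aB)$: there the cut-set viewpoint is vacuous and one must instead produce a vertex of $A\t B$ of degree at most $2$ by a direct count. Everything else is routine, using the two complementary features of $3$-connectivity, namely the non-existence of $\leq 2$-cuts and $\delta\geq 3$.
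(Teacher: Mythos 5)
Your proposal is correct and follows essentially the same route as the paper: connectivity via \cite[Proposition 2.12]{kpzz19}, planarity via the $A$-minor and the embedded copy of $B$, and then cut-set/degree-counting arguments on the exit vertices of a copy $aB$ for the bounds $\delta(A)\geq 3$, $|\ii(f)|\geq 2$, and $\delta(A)\geq 4$ when $B=K_2$. The only cosmetic difference is that the paper splits on $|V(B)|\geq 3$ versus $B=K_2$ up front (the latter case being absorbed into the stronger $\delta(A)\geq 4$ claim), whereas you split on whether the exit set exhausts $V(aB)$; the two case analyses cover the same ground.
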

\begin{proof}
Since $A\t B$ is connected, by \cite[Proposition 2.12]{kpzz19} both of $A,B$ are connected. By \cite[Lemma 2.5]{kpzz19}, $A\t B$ contains an $A$-minor and a copy of $B$, hence both of $A,B$ are planar.

Suppose that $|V(B)|\geq 3$. By contradiction, let $a\in V(A)$ be of degree at most $2$. If $(a,b)\in V(A\t B)$ is adjacent to a vertex outside of $aB$, then there exists $a'\in V(A)$ such that $b=f(a')$ and $aa'\in E(A)$. Therefore, at most two vertices of $aB$ are adjacent to vertices outside of $aB$. Hence there exists a $1$-cut or a $2$-cut in $A\t B$, contradiction. Similarly, if by contradiction we have $\ii(f)=\{b\}$, then since $B\not\simeq K_1$, for every $a\in V(A)$ the vertex $(a,b)$ is separating.

Now let $B=K_2$, with $V(B)=\{x,y\}$, and $a\in V(A)$. Since $A\t B$ is a polyhedron, its vertex $(a,x)$ has at least two neighbours other than $(a,y)$, say $(a_1,f(a))$ and $(a_2,f(a))$. Therefore, there exist at least two vertices $a_1,a_2\in V(A)$ adjacent to $a$ and such that $f(a_1)=f(a_2)=x$. Likewise, $(a,y)$ has at least two neighbours other than $(a,x)$, hence there exist at least two vertices $a_3,a_4\in V(A)$ adjacent to $a$ and such that $f(a_3)=f(a_4)=y$. Altogether, $a$ has at least four neighbours $a_1,a_2,a_3,a_4$ in $A$.
\end{proof}

\begin{lemma}
	\label{le:2conn}
	Let $A$ be a graph such that $A\s A$ is $2$-connected. Then $A$ is $2$-connected.
\end{lemma}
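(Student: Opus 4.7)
The plan is to prove the contrapositive: assuming $A$ is not $2$-connected, I will exhibit a separating vertex in $A\s A$. Since $A\s A$ is $2$-connected it is in particular connected, and the connectedness characterisation of Sierpi\'{n}ski products (Proposition 2.12 of \cite{kpzz19}, quoted at the start of Section~1.3) forces $A$ to be connected. Also, from $|V(A\s A)|>2$ we get $|V(A)|\geq 2$; the case $|V(A)|=2$ gives $A\simeq K_2$, and a direct computation shows that $A\s A$ is then the path on four vertices, which is not $2$-connected. So the only remaining case to rule out is that $A$ is connected, has at least three vertices, and admits a separating vertex $a$.

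For this case, I will apply Theorem \ref{thm:sep} with $B=A$ and $f$ the identity map to the vertex $(a,a)\in V(A\s A)$, verifying that condition \ref{eq:sepi} holds. I take the partition $H_1,\dots,H_n$ of $V(A-a)$ to consist simply of the connected components of $A-a$; since $a$ is separating we have $n\geq 2$, and the refinement requirement that $h'$ being in the same component of $A-a$ as some $h_i\in H_i$ forces $h'\in H_i$ is then tautological.

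The remaining piece of condition \ref{eq:sepi} asks that, for each pair of neighbours $h_i\in H_i$ and $h_j\in H_j$ of $a$ with $i\neq j$, the chosen vertex $b=a$ lies on every $f(h_i)f(h_j)=h_ih_j$-path of $A$. This is immediate: $h_i$ and $h_j$ belong to different connected components of $A-a$, so any path in $A$ joining them must pass through $a$. Theorem \ref{thm:sep} then yields that $(a,a)$ is a separating vertex of $A\s A$, contradicting the $2$-connectedness of the product.

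I do not anticipate any real obstacle: the result is a direct appeal to the separating vertex characterisation just proved. The only points requiring care are the choice $b=a$, which makes the path-blocking hypothesis automatic from the fact that $a$ separates $A$, and taking the partition $H_1,\ldots,H_n$ to be exactly the connected components of $A-a$.
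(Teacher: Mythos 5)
Your proof is correct, but it takes a genuinely different route from the paper. The paper proves the lemma directly: given a separating vertex $a$ of $A$ with $u,v$ in distinct components of $A-a$, it traces an arbitrary $(u,v)(v,u)$-path in $A\s A$, observes that its initial segment stays inside $uA$ until it exits through some $(u,v_n)(v_n,u)$ with $uv_n\in E(A)$, reads off from the second coordinates a $uv$-path in $A$, and concludes that $(u,a)$ must appear on the product path; hence $(u,a)$ is a separating vertex of $A\s A$. You instead invoke Theorem \ref{thm:sep} with $B=A$, $f=\mathrm{id}$, and $b=a$, taking the $H_i$ to be the components of $A-a$, to conclude that $(a,a)$ is separating. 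Your verification of condition \ref{eq:sepi} is correct and the hypotheses of Theorem \ref{thm:sep} (connectedness and non-triviality of $A$) are properly secured; you even close the small gap the paper leaves open, namely that $2$-connectedness also requires $|V(A)|>2$, by disposing of $A\simeq K_2$ explicitly. The one caveat is organisational rather than mathematical: Lemma \ref{le:2conn} sits in Section \ref{sec:bas}, whereas Theorem \ref{thm:sep} is only proven in Section \ref{sec:high}. Since the proof of Theorem \ref{thm:sep} does not rely on Lemma \ref{le:2conn}, there is no circularity, but your argument forces a reordering (or an explicit forward reference) that the paper's self-contained path-tracing argument avoids. What your approach buys in exchange is economy once the general machinery is in place; what the paper's buys is independence from the heavier theorem and an argument that works at this early point in the exposition.
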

\begin{proof}
By contradiction, let $a\in V(A)$ be a separating vertex. Then there exist $u,v\in V(A)$ distinct from $a$ such that $a$ lies on every $uv$-path. Consider the vertices
\[(u,v), (v,u)\]
in the product. They are not adjacent, since $uv\not\in E(A)$. Let $\calP$ be a $(u,v)(v,u)$-path. By definition of Sierpi\'{n}ski product, the vertex following $(u,v)$ along $\calP$ is of the form $(u,v_1)$, with $vv_1\in E(A)$. The next vertex along the path is either of the form $(u,v_2)$, with $v_1v_2\in E(A)$, or if $uv_1\in E(A)$ possibly it is $(v_1,u)$. Continuing in this fashion, the first few vertices of $\calP$ are
\[(u,v), (u,v_1), (u,v_2), \dots, (u,v_n), (v_n,u),\]
with $n\geq 1$ and $uv_n\in E(A)$. We have determined in $A$ the path
\[u,v_n,v_{n-1},\dots,v_1,v.\]
Since $a$ belongs to every $uv$-path, then $a$ must be one of $v_1,v_2,\dots,v_n$. We have proven that in $A\s A$ the vertex $(u,a)$ lies on every $(u,v)(v,u)$-path. Hence $(u,a)$ is a separating vertex of $A\s A$, contradiction.
\end{proof}

The following basic lemma will be useful.
\begin{lemma}
	\label{le:out}
An outerplanar, $2$-connected graph $G$ is Hamiltonian. In fact, $G$ is simply a polygon with added diagonals that do not cross. It has at least two vertices of degree $2$.
\end{lemma}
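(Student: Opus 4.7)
I would approach this in two stages: first establishing that $G$ is a Hamiltonian polygon with non-crossing chords, and then separately deriving the degree-$2$ conclusion.

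For the first stage, I would fix a planar embedding of $G$ in which every vertex lies on the boundary of the outer face $\calR_{\infty}$ (such an embedding exists by outerplanarity). The boundary of $\calR_{\infty}$ is a priori a closed walk. Here the $2$-connectedness plays its crucial role: in any $2$-connected plane graph, the boundary of every face is a simple cycle (otherwise some vertex would appear twice on the walk and would be a cut-vertex). Hence the boundary of $\calR_{\infty}$ is a cycle $C$, and since all vertices of $G$ lie on this boundary, $C$ is a Hamiltonian cycle. Every remaining edge of $G$ must then be drawn inside the bounded region enclosed by $C$, and planarity prevents any two such edges from crossing. Thus $G$ is exactly a polygon with a collection of pairwise non-crossing diagonals, as claimed.

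For the second stage, I would argue that at least two vertices of $G$ have degree $2$ by analysing the inner-face structure. If $G$ has no chord at all, then $G = C$ and every vertex has degree $2$, so there is nothing to prove. Otherwise the chords partition the interior of $C$ into at least two bounded faces, and the graph whose nodes are these inner faces and whose edges join two faces sharing a chord is a tree (an easy consequence of the fact that removing a chord merges two inner faces). Since this tree has at least two nodes, it has at least two leaves, i.e. at least two inner faces whose boundary contains exactly one chord.

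The key observation is then that a leaf face $F$, bounded by one chord $uv$ and by an arc $u = u_0, u_1, \ldots, u_k = v$ of $C$ with $k \geq 2$, forces each intermediate vertex $u_1, \ldots, u_{k-1}$ to have degree $2$ in $G$: any chord incident to some $u_i$ with $1 \leq i \leq k-1$ would have to enter the interior of $F$, which would either cross $uv$ (contradicting non-crossing) or subdivide $F$ (contradicting that $F$ is a face). Each leaf face therefore contributes at least one degree-$2$ vertex, and since the intermediate arcs of two distinct leaf faces are disjoint, we obtain at least two distinct degree-$2$ vertices. The only step I would expect to need a careful sentence is the transition from ``boundary of outer face is a closed walk'' to ``it is a simple cycle'', because that is where $2$-connectedness is silently but essentially used; everything else follows from the explicit planar picture.
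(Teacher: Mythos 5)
Your proof is correct. For the first assertion (Hamiltonicity and the polygon-with-non-crossing-diagonals description) you follow essentially the same route as the paper: outerplanarity gives a face whose boundary contains all vertices, $2$-connectedness upgrades that boundary from a closed walk to a simple cycle, and planarity forces the remaining edges to be pairwise non-crossing chords. Your explicit remark that the passage from ``closed walk'' to ``simple cycle'' is exactly where $2$-connectedness is used is a welcome clarification of a step the paper states without comment.

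The genuine difference is in the final claim. The paper does not prove that $G$ has at least two vertices of degree $2$; it cites an external reference for this fact. You instead give a self-contained argument via the weak dual: the inner faces, joined when they share a chord, form a tree (every chord separates the interior, so every dual edge is a bridge), a tree on at least two nodes has at least two leaves, each leaf face is bounded by exactly one chord together with an arc of the Hamiltonian cycle containing at least one intermediate vertex (at least one because the graph is simple, so the arc cannot be a single edge parallel to the chord), and each such intermediate vertex has degree $2$ since any chord at it would have to enter and then cross or subdivide the leaf face. The intermediate vertices of distinct leaf faces are distinct because a degree-$2$ vertex lies on exactly one inner face. This buys you independence from the cited reference at the cost of a slightly longer argument; both the tree structure of the weak dual and the leaf-face analysis are sound, and the chordless case ($G$ a bare cycle) is handled correctly.
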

\begin{proof}
Since $G$ is outerplanar, there exists a region containing all vertices. Since $G$ is $2$-connected, each region is bounded by a cycle. Thereby, there exists a cycle in $G$ containing all vertices i.e., $G$ is Hamiltonian. Any remaining edges in $G$ are diagonals of the polygon formed by the Hamiltonian cycle, and by planarity the diagonals cannot cross. It was shown in \cite[\S 3.2]{maffucci2024characterising} that such $G$ has at least two vertices of degree $2$. These are not part of any diagonals.
\end{proof}

We are ready to classify the polyhedral Sierpi\'{n}ski products of the form $A\s A$. In fact, there is a unique $2$-connected planar graph of the form $A\s A$.
\begin{prop}
	\label{prop:AA}
The graph $A\s A$ is $2$-connected and planar if and only if $A\simeq K_4$.
\end{prop}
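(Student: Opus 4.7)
My plan is to treat both implications separately. For the sufficiency direction, I would verify that $K_4 \s K_4$ is a polyhedron: $3$-connectivity follows from Theorem \ref{thm:kc} with $k = 3$, since $K_4$ is $3$-connected and the identity $f$ satisfies $|\{f(a') : aa' \in E(K_4)\}| = 3$ at every vertex; planarity follows from Theorem \ref{thm:siepol}, because in the standard planar embedding of $K_4$ (one vertex placed inside the triangle of the other three) each vertex $a$ has its three neighbours forming the unique triangular face of $K_4$ not incident to $a$, with the cyclic order matching that around $a$.

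For the converse direction, I would first show that $A$ itself is a polyhedron. Lemma \ref{le:deg3} gives planarity, connectedness, and $\delta(A) \geq 3$; Lemma \ref{le:2conn} gives $2$-connectivity. To upgrade $A$ to $3$-connected, I argue by contradiction: if $\{v_1, v_2\}$ were a $2$-cut of $A$ with components $C_1, C_2, \dots$ of $A - \{v_1, v_2\}$, then picking any $v \in V(C_1)$, within the copy $vA$ the vertices $\{(v, c) : c \in V(C_2)\}$ would be disconnected from the rest of $vA$ after removing $(v, v_1)$ and $(v, v_2)$, and would carry no external edges (since $N_A(v) \cap V(C_2) = \emptyset$). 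This would produce a $2$-cut of $A \s A$, contradicting its polyhedrality.

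With $A$ polyhedral, I apply Theorem \ref{thm:siepol} with $B = A$ and $f = $ id (trivially locally injective, as $A$ is simple), obtaining for every $a \in V(A)$ a face $F_a$ of $A$ whose contour contains $N_A(a)$ in the cyclic order matching that around $a$. A careful structural analysis of the planar embedding then establishes that $a \notin V(F_a)$, that $F_a$ has length exactly $\deg(a)$, and that $a$'s edges subdivide the region on $a$'s side of $F_a$ into $\deg(a)$ triangular faces of the form $a u_i u_{i+1}$. Applying the face condition at a neighbour $u$ of $a$ then forces $\deg(a) = 3$, since otherwise $F_u$ would need to contain two non-consecutive neighbours of $a$, which no triangle around $a$ can accommodate. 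Thus $A$ is $3$-regular with all faces triangular, and Euler's formula yields $|V(A)| = 4$, hence $A \simeq K_4$.

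The main obstacle is the structural analysis in the last step, specifically ruling out configurations where $a$ lies on $F_a$ itself (introducing chord structure inconsistent with $F_a$ being a face) or where $F_a$ has length strictly greater than $\deg(a)$ (which requires tracking the face conditions imposed on the extra non-neighbour vertices on $F_a$'s contour); both cases must be eliminated by propagating the face condition to adjacent vertices and exploiting the rigidity of the unique planar embedding of the polyhedron $A$.
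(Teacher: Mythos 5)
Your proof takes a genuinely different route from the paper's and, as written, has two gaps. For the necessity direction the paper is much shorter: it cites \cite[Theorem 2.16]{kpzz19} to get that every block of $A$ is outerplanar or a copy of $K_4$, uses Lemma \ref{le:2conn} to reduce $A$ to a single block, and kills the outerplanar alternative because Lemma \ref{le:out} would force two vertices of degree $2$, against Lemma \ref{le:deg3}. You instead upgrade $A$ to a polyhedron (your lifting of a $2$-cut of $A$ to a $2$-cut of $A\s A$ is correct) and then run a face-structure analysis through Theorem \ref{thm:siepol}. Your sufficiency verification via Theorems \ref{thm:kc} and \ref{thm:siepol} is sound and more explicit than the paper's appeal to Figure \ref{fig:sie}, though it forward-references results only proved in Section \ref{sec:high}.

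The first gap is the one you flag yourself: the ``careful structural analysis'' is the entire technical content of your route and is not carried out. Excluding $a\in V(\calR_a)$ is manageable (in a $3$-connected planar graph face boundaries are induced cycles, so $a$ would have only two neighbours on its own face), but proving that $\calR_a$ has length exactly $\deg_A(a)$ and that the wedges at $a$ are triangular faces is where all the work lies; your closing argument at a neighbour $u_1$ presupposes exactly this clean picture, since it needs $u_2$ and $u_{\deg_A(a)}$ to actually be neighbours of $u_1$, which fails if $\calR_a$ carries extra vertices. The second gap is a hypothesis mismatch: the proposition assumes only that $A\s A$ is $2$-connected and planar, whereas every tool you invoke --- Lemma \ref{le:deg3}, Theorem \ref{thm:siepol}, and the contradiction with ``polyhedrality'' in your $2$-cut step --- requires the product to be $3$-connected. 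This is not merely cosmetic: $K_3\s K_3$ (three triangles pairwise joined by an edge) is $2$-connected and planar, so a degree-$2$ vertex of $A$ yields only a $2$-cut, not a $1$-cut, of $A\s A$; the paper's own invocation of Lemma \ref{le:deg3} quietly relies on $3$-connectivity too, but your route cannot even begin without it, since Theorem \ref{thm:siepol} is stated only for polyhedral products. As written you prove at most the implication ``$A\s A$ is a polyhedron $\Rightarrow A\simeq K_4$''.
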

\begin{proof}
Let $A\s A$ be $2$-connected and planar. By \cite[Theorem 2.16]{kpzz19}, every block of $A$ is either outerplanar or a copy of $K_4$. By Lemma \ref{le:2conn}, $A$ is $2$-connected, thus $A$ itself is either outerplanar or isomorphic to $K_4$. If $A$ is outerplanar, then by Lemma \ref{le:out}, $A$ contains at least two vertices of degree $2$. Invoking Lemma \ref{le:deg3}, we thus exclude the case of $A$ outerplanar, hence $A\simeq K_4$. We note that $K_4\s K_4$ (Figure \ref{fig:sie}) is $2$-connected and planar (in fact, it is a polyhedron), and the proof is complete.
\begin{figure}[ht]
\centering
\includegraphics[width=3.25cm]{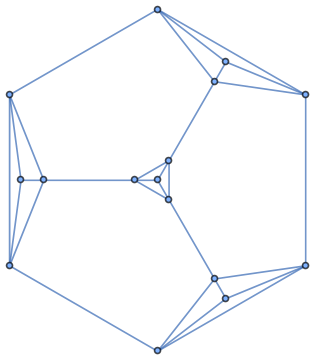}
\caption{$K_4\s K_4$.}
\label{fig:sie}
\end{figure}
\end{proof}

On the other hand, it is possible for $A,B$ to be both of connectivity $1$ and $A\t B$ a polyhedron, even when $V(A)\subseteq V(B)$ and $f:V(A)\to V(B)$ is the identity on its domain. For instance, we may construct $A$ as two octahedra sharing a vertex, and $B$ as two heptagonal prisms sharing a vertex, labeled as in Figure \ref{fig:sp}.
%A=two octahedra with common vertex. B=two hept prisms with a common vertex. f=ID on its domain

\section{Higher connectivity for Sierpi\'{n}ski products}
\label{sec:high}

\subsection{Proof of Theorems \ref{thm:sep}, \ref{thm:kc}, and \ref{thm:siepol}}

Recall that by \cite[Proposition 2.12]{kpzz19}, the product $A\t B$ is connected if and only if both of $A,B$ are connected. The goal of this section is to inspect higher connectivity for Sierpi\'{n}ski products.

We start with a useful lemma that allows us to construct a path between vertices in the product starting from a path in $A$.

\begin{lemma}
	\label{le:path}
Let $A,B$ be connected graphs, $a',a''\in V(A)$ and $b',b''\in V(B)$. Given a path in $A$ from $a'$ to $a''$
\begin{equation}
	\label{eq:u0}
	a'=u_0,u_1,u_2,\dots,u_{m-1},a''=u_m, \qquad m\geq 0,
\end{equation}
we can always find a path $P$ in $A\t B$ from $(a',b')$ to $(a'',b'')$ such that the first entries of the vertices of $P$ all belong to \eqref{eq:u0}.
\end{lemma}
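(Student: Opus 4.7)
The plan is to construct $P$ copy-by-copy, walking through each of the subgraphs $u_0 B, u_1 B, \dots, u_m B$ in turn, exploiting the fact that within each $u_i B \simeq B$ we have a connected graph, and between consecutive copies $u_i B$ and $u_{i+1} B$ (which is where the path in $A$ licenses a transition) the Sierpi\'{n}ski product definition gives us the single bridging edge
\[(u_i, f(u_{i+1}))(u_{i+1}, f(u_i)).\]
So morally, in each copy we drift from the ``entry'' vertex to the ``exit'' vertex, then hop across the bridge.

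I would handle the degenerate case $m=0$ first: here $a'=a''$, and since $a'B \simeq B$ is connected, any $B$-path from $b'$ to $b''$ lifts to a path in $a'B$ from $(a',b')$ to $(a'',b'')$, whose first entries are all $a'=u_0$. For $m \geq 1$, I would build $P$ by concatenation. Inside $u_0 B$, take any path $Q_0$ from $(a',b')$ to $(u_0, f(u_1))$, which exists by connectedness of $B$ (and is trivial if $b'=f(u_1)$). Traverse the bridging edge to $(u_1,f(u_0))$. Inside $u_1 B$, take any path $Q_1$ from $(u_1,f(u_0))$ to $(u_1,f(u_2))$ (again trivial if these coincide). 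Bridge to $(u_2,f(u_1))$, and so on. At the last step, inside $u_m B$, take any path $Q_m$ from $(u_m, f(u_{m-1}))$ to $(u_m, b'') = (a'',b'')$. By construction, every vertex visited has first coordinate in $\{u_0, u_1, \dots, u_m\}$.

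The only mildly delicate point is to verify that $P$ is genuinely a path (no repeated vertices), not merely a walk. But this is immediate: the subpaths $Q_0,Q_1,\dots,Q_m$ live in the pairwise disjoint vertex sets $u_0 B, u_1 B, \dots, u_m B$ (since the $u_i$ are distinct, being consecutive vertices of a path in $A$), and each individual $Q_i$ is itself a path in $u_i B$ with no internal repetition. Concatenation thus produces a path, not just a walk.

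I do not anticipate a serious obstacle: the argument is essentially a lifting of the given path in $A$ through the canonical description of $A\t B$ as copies of $B$ glued by bridging edges indexed by $E(A)$. The only thing one must check carefully is that one does not inadvertently assume $b'\neq f(u_1)$ or $f(u_{i-1}) \neq f(u_{i+1})$ or $b'' \neq f(u_{m-1})$; allowing the trivial one-vertex paths in those degenerate situations covers all cases uniformly.
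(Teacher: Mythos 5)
Your proposal is correct and follows essentially the same route as the paper's proof: lift the $A$-path by concatenating, for each $u_\ell$, a path inside $u_\ell B$ from $(u_\ell,f(u_{\ell-1}))$ to $(u_\ell,f(u_{\ell+1}))$ (with the obvious modifications at the two ends) joined by the bridging edges $(u_\ell,f(u_{\ell+1}))(u_{\ell+1},f(u_\ell))$. Your added remarks on the $m=0$ case, the degenerate one-vertex subpaths, and the disjointness of the copies $u_\ell B$ guaranteeing a genuine path rather than a walk are all correct and only make explicit what the paper leaves implicit.
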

\begin{proof}
Given \eqref{eq:u0}, we find in $B$ a $b_if(u_1)$-path, a $f(u_{m-1})b'$-path, and for each $\ell=1,\dots,m-1$, a $f(u_{\ell-1})f(u_{\ell+1})$-path, that exist by connectivity of $B$. This means we can find in $a'B$ a path $P_{0}$ from $(a',b')$ to $(a',f(u_1))$, in $a''B$ a path $P_{m}$ from $(a'',f(u_{m-1}))$ to $(a'',b'')$, and for each $\ell=1,\dots,m-1$ in $u_{\ell}B$ a path $P_{\ell}$ from $(u_{\ell},f(u_{\ell-1}))$ to $(u_{\ell},f(u_{\ell+1}))$. Finally we build $P$ by combining the following edges and paths,
\[P_0,(a',f(u_1))(u_1,f(a')),P_1,(u_1,f(u_2))(u_2,f(u_1)),P_2,\dots,P_{m-1},(u_{m-1},f(a''))(a'',f(u_{m-1})),P_m.\]
\end{proof}

We are ready to prove Theorem \ref{thm:sep}.

\begin{proof}[Proof of Theorem \ref{thm:sep}]
%Since $A,B$ are connected, so is $A\t B$ \cite[Proposition 2.12]{kpzz19}. % Suppose that $A\t B-(a,b)$ is disconnected and that
%\[\deg_A(a)\geq 2.\]
%Since $A\t B$ is connected,
Assume that condition \ref{eq:sepi} holds. Take $h_1\in H_1$ and $h_2\in H_2$. Letting $b_1,b_2\in V(B)$, we will prove that $(a,b)$ lies on every
\[(h_1,b_1)(h_2,b_2)\text{-path}.\]
Let $\calP$ be such a path. We build a tuple $\calT$ with the first entries of the vertices of $\calP$ taken in order. By condition \ref{eq:sepi} and the definition of Sierpi\'{n}ski product, for $i=1,2$ if $h_i\in H_i$ and $(h_i,b_i)(h',b')\in E(A\t B)$, then either $h'=a$ or $h'\in H_i$. Therefore, in $\calT$ an element of $H_1$ may only be followed by $a$ or by another element of $H_1$. However, $\calP$ must contain also vertices with first entry in some $H_j$ with $j\neq 1$. An element of $H_j$ in $\calT$ may only be preceded by $a$ or another element of $H_j$. Hence in $\calT$ we will have some instance of $h_1'\in H_1$ followed by one or more instances of $a$ followed by some $h_j'\in H_j$. These correspond to a sub-path in $\calP$ of the form
\[(h_1',f(a)),(a,f(h_1')),\dots,(a,f(h_j')),(h_j',f(a)),\]
where the omitted vertices lie in $aB$. By condition \ref{eq:sepi}, $b$ belongs to every $f(h_1')f(h_j')$-path in $B$, hence by definition of Sierpi\'{n}ski product, the vertex $(a,b)$ appears somewhere along $\calP$, as claimed.

Now assume that condition \ref{eq:sepii} holds. Consider a vertex $(a,b')$ of the graph $aJ$. By assumption and by definition of Sierpi\'{n}ski product, $(a,b')$ can only be adjacent to $(a,b)$ or to another vertex of $aJ$. Letting $v\in V(A)$ with $v\neq a$, and taking any $x\in V(B)$, the vertex $(a,b)$ lies on every
\[(a,b')(v,x)\text{-path}\]
in $A\t B$.

Now assume that condition \ref{eq:sepiii} holds. Similarly to the previous case, take $x\in V(B)$ with $x\neq b$. Let $v\in V(A)$ with $v\neq a$, and $y\in V(B)$. Consider a
\[(a,x)(v,y)\text{-path}\]
in $A\t B$, call $(a',x')$ the first vertex with $a'\neq a$ lying on the path, and $(a,z)$ the one preceding it along the path. By definition of Sierpi\'{n}ski product, $a'a\in E(A)$ and $z=f(a')$. By assumption $f(a')=b$, thus $(a,b)$ lies on every $(a,x)(v,y)$-path, as required.

Vice versa, let $(a,b)$ be a separating vertex in $A\t B$, and
\[G_1,G_2,\dots,G_n, \qquad n\geq 2\]
the connected components of
\[A\t B-(a,b).\]
Each of these components contains a vertex adjacent to $(a,b)$.% Hence by definition of Sierpi\'{n}ski product, we record that
%\begin{align}
%	\label{eq:Gi}
%&\notag\text{every } G_i \text{ contains either a vertex of the form } (a',f(a)) \text{ where } aa'\in E(A) \text{ and } f(a')=b, \\&\text {or a vertex of the form } (a,x) \text{ where } xb\in E(B).
%\end{align}

For $1\leq i\leq n$ we define $H_i$ by taking the first entries of all elements of $G_i$, and removing $a$ if present. Such $H_i$'s are subsets of $V(A-a)$. % We record that by \eqref{eq:Gi}
%\begin{align}
%	\label{eq:Hi}
%	&\notag\text{every } H_i \text{ contains either the vertex } a \text{ or one of its neighbours.}
%\end{align}
We claim that
\begin{equation}
\label{eq:partH}
H_1,H_2,\dots,H_n
\end{equation}
is a partition of $V(A-a)$. Since all vertices of $A\t B$ where the first component is not $a$ appear in one of the $G_i$'s, then all elements of $V(A-a)$ clearly appear in at least one $H_i$. Hence to prove that \eqref{eq:partH} is a partition, it remains to show that if $i\neq j$ then
\[H_i\cap H_j=\emptyset.\]
By contradiction, there exists $v\in H_i\cap H_j$ with $i\neq j$. Then there exist $(v,b_i)\in V(G_i)$ and $(v,b_j)\in V(G_j)$, where $b_i,b_j$ are distinct vertices of $B$. Thus every
\[(v,b_i)(v,b_j)\text{-path}\]
contains $(a,b)$. However, this is impossible, since $vB\simeq B$ is connected, hence by definition of Sierpi\'{n}ski product there exists a $(v,b_i)(v,b_j)$-path contained entirely within $vB$. Thus \eqref{eq:partH} is indeed a partition of $V(A-a)$.

Now let $h_i\in H_i$, and $h'\in V(A)$ in the same component of $A-a$ as $h_i$. Then we have a $h_ih'$-path in $A$ that does not contain $a$. Letting $b_i,b'\in V(B)$, we build via Lemma \ref{le:path} a $(h_i,b_i)(h',b')$-path in $A\t B$ that does not contain $(a,b)$. Since $(h_i,b_i)\in V(G_i)$, we also have $(h',b')\in V(G_i)$, thus $h'\in H_i$ as in \ref{eq:sepi}.

%Letting $b_i,b'\in V(B)$, we build a $(h_i,b_i)(h',b')$-path $\calP$ as follows. We start with a $h_ih'$-path in $A$ that does not contain $a$,
%\begin{equation}
%	\label{eq:u}
%	h_i=u_0,u_1,u_2,\dots,u_{m-1},h'=u_m
%\end{equation}
%that exists since $h_i,h'$ are in the same component in $A-a$. Then in $B$ we find a $b_if(u_1)$-path, a $f(u_{m-1})b'$-path, and for each $\ell=1,\dots,m-2$ a $f(u_{\ell-1})f(u_{\ell+1})$-path, that exist by connectivity of $B$. This means we can find in $h_iB$ a path $\calP_{0}$ from $(h_i,b_i)$ to $(h_i,f(u_1))$, in $h'B$ a path $\calP_{m}$ from $(h',f(u_{m-1}))$ to $(h',b')$, and for each $\ell=1,\dots,m-1$ in $u_{\ell}B$ a path $\calP_{\ell}$ from $(u_{\ell},f(u_{\ell-1}))$ to $(u_{\ell},f(u_{\ell+1}))$. Finally we build $\calP$ by combining the following edges and paths,
%\[\calP_0,(h_i,f(u_1))(u_1,f(h_i)),\calP_1,(u_1,f(u_2))(u_2,f(u_1)),\calP_2,\dots,\calP_{m-1},(u_{m-1},f(h'))(h',f(u_{m-1})),\calP_m.\]
%Since $a$ does not belong to \eqref{eq:u}, then $(a,b)$ does not belong to $\calP$. Therefore, since $(h_i,b_i)\in G_i$, we also have $(h',b')\in G_i$, thus $h'\in H_i$ as claimed.

Suppose for the moment that $a$ is a separating vertex of $A$. This condition is equivalent to the existence of two neighbours of $a$ in distinct connected components of $A-a$. By the preceding argument, we may fix these neighbours to be $h_i\in H_i$ and $h_j\in H_j$ with $i\neq j$. We build a path
\begin{equation}
	\label{eq:path12}
(h_i,f(a)),(a,f(h_i)),\dots,(a,f(h_j)),(h_j,f(a)),
\end{equation}
where the $(a,f(h_i))(a,f(h_j))$-subpath lies entirely within the connected graph $aB\simeq B$. Since $(h_i,f(a))\in V(G_i)$ and $(h_j,f(a))\in V(G_j)$, then the separating vertex $(a,b)$ lies on \eqref{eq:path12}. Now by definition of Sierpi\'{n}ski product, $b$ lies on every $f(h_i)f(h_j)$-path in $B$. We have obtained condition \ref{eq:sepi}.
%The only other possibility is that all neighbours of $a$ belong to some $H_i$.% We remark that by definition of separating vertex, each of $G_1,G_2,\dots G_n$ contains at least one neighbour of $(a,b)$. By definition of Sierpi\'{n}ski product, the neighbours of $(a,b)$ are either of the form $(a,x)$ with $xb\in E(B)$ or of the form $(a',f(a))$ with $aa'\in E(A)$ and $f(a')=b$. Now if all neighbours of $a$ belong to some $H_i$, then $G_j$ with $j\neq i$ cannot contain any element of the form $(a',f(a))$ with $aa'\in E(A)$. Hence in this case each $G_j$ with $j\neq i$ contains a vertex of the form $(a,x)$ with $xb\in E(B)$.

Henceforth we may assume that $a$ is not a separating vertex of $A$. Thus if $H_1\neq\emptyset$, then we can fix $h_1\in H_1$ and deduce that, since $a$ is not a separating vertex, then every $h'\in V(A-a)$ is in the same component of $A-a$ as $h_1$, hence as shown above $h'$ belongs to $H_1$ as well. It follows that $H_2=H_3=\dots=H_n=\emptyset$. Then $G_1$ contains all vertices of $A\t B$ where the first component is not $a$, while $G_2,G_3,\dots,G_n$ only contain vertices where the first component is $a$.

Suppose that there exist $x,y\in V(B)$ such that $(a,x)\in V(G_i)$ and $(a,y)\in V(G_j)$, $i\neq j$. That is to say, $(a,b)$ lies on every $(a,x)(a,y)$-path in $A\t B$. In particular, $b$ is a separating vertex of $B$, and $x,y$ lie in distinct connected components of $B-b$. 
%that is to say there exist $x,y\neq b$ such that $b$ lies on every $xy$-path in $B$. % By definition of Sierpi\'{n}ski product, equivalently in $aB$ the vertex
%$(a,b)$ lies on every
%\[(a,x)(a,y)\text{-path}.\]
%Also equivalently, there exist $G_i$ and $G_j$ with $i\neq j$ such that $(a,x)\in G_i$ and $(a,y)\in G_j$.
Letting $J_x$ and $J_y$ be the components of $B-b$ containing $x,y$ respectively, we claim that at least one of $J_x,J_y$ is disjoint from
\[\{f(a') : aa'\in E(A)\}.\]
By contradiction, let $J_x$ contain $f(a_x)$ and $J_y$ contain $f(a_y)$, where $a_x$ and $a_y$ are neighbours of $a$. Then we may find paths $\calP_x$ in $aJ_x$ from $(a,x)$ to $(a,f(a_x))$ and $\calP_y$ in $aJ_y$ from $(a,y)$ to $(a,f(a_y))$ that do not contain $(a,b)$. Since $a$ is not a separating vertex of $A$, we can find a path in $A$ from $f(a_x)$ to $f(a_y)$ that does not contain $a$. Via Lemma \ref{le:path}, we may build a path $\calP$ in $A\t B$ between $(a_x,f(a))$ and $(a_y,f(a))$, such that the first entry of each vertex of $\calP$ is always different from $a$. Thus $\calP$ does not contain $(a,b)$. We may then combine the following paths and edges,
\[\calP_x,(a,f(a_x))(a_x,f(a)),\calP,(a_y,f(a))(a,f(a_y)),\calP_y\]
to obtain a path between $(a,x)$ and $(a,y)$ that does not include $(a,b)$, contradiction. We have obtained the condition \ref{eq:sepii} with $J\in\{J_x,J_y\}$.

%We have proven that if there exist $h_i\in H_i$ and $h_j\in H_j$ that are neighbours of $a$, where $i\neq j$, then condition \ref{eq:sepi} holds. We have also proven that if there exist $G_i$ and $G_j$ with $i\neq j$ such that $(a,x)\in G_i$ and $(a,y)\in G_j$, then condition \ref{eq:sepii} holds.
The only other possibility is that there exists $i\in\{1,2,\dots,n\}$ such that $(a,x)$ lies in $G_i$ for every $x\in V(B-b)$. We cannot have $i=1$, otherwise $G_2,G_3,\dots,G_n$ would contain no vertices, hence $n=1$ and $(a,b)$ could not be a separating vertex of $A\t B$. It follows that $i=n=2$. That is to say,
\[A\t B-(a,b)\]
has exactly two connected components $G_1,G_2$, satisfying
\[V(G_1)=\{(a',b') : a'\neq a\}\]
and
\[V(G_2)=\{(a,x) : x\in V(B-b)\}.\]
Hence the only vertex of $A\t B$ with first component equal to $a$ and adjacent to some $(a',b')$ where $aa'\in E(A)$ is $(a,b)$ itself. By definition of Sierpi\'{n}ski product, all neighbours $a'$ of $a$ satisfy $f(a')=b$. We have obtained condition \ref{eq:sepiii}, and the proof of the present theorem is complete.
\end{proof}

As an illustration of Theorem \ref{thm:sep}, the graphs in Figures \ref{fig:sepv1}, \ref{fig:sepv2}, and \ref{fig:sepv3} satisfy \ref{eq:sepi}, \ref{eq:sepii}, and \ref{eq:sepiii} respectively. 
	\begin{figure}[ht]
		\centering
		\begin{subfigure}{0.32\textwidth}
			\centering
			\includegraphics[width=4.25cm]{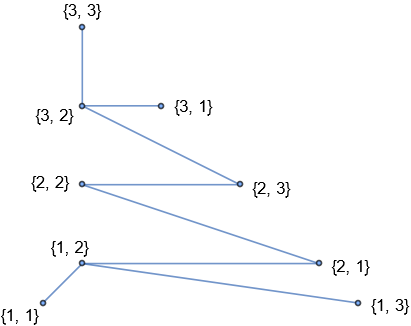}
			\caption{$E(A)=E(B)=\{12,23\}$ and $f$ is the identity.}
			\label{fig:sepv1}
		\end{subfigure}
		\hfill
		\begin{subfigure}{0.32\textwidth}
			\centering
			\includegraphics[width=4.75cm]{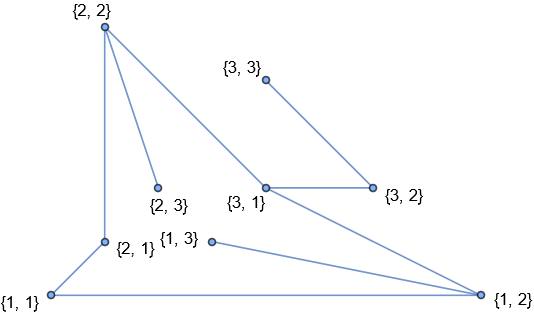}
			\caption{$A=K_3$, $E(B)=\{12,23\}$, and $f(1)=f(2)=1$, $f(3)=2$.}
			\label{fig:sepv2}
		\end{subfigure}
		\hfill
		\begin{subfigure}{0.32\textwidth}
			\centering
			\includegraphics[width=4.25cm]{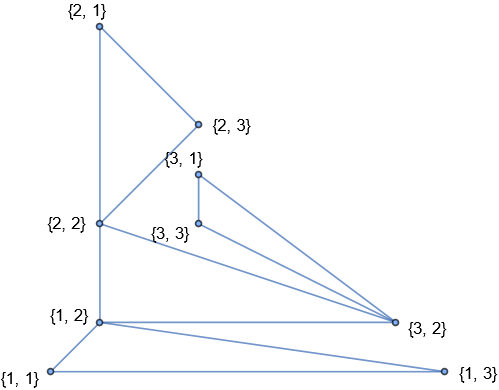}
			\caption{$A=B=K_3$ and $f(1)=f(2)=f(3)=2$.}
			\label{fig:sepv3}
		\end{subfigure}
		\caption{In these examples, $V(A)=V(B)=\{1,2,3\}$. In each case $(2,2)$ is a separating vertex in $A\t B$.}
		\label{fig:sepv}
	\end{figure}

%\begin{cor}
%Let $A,B$ be $2$-connected graphs, and $f$ a locally injective function. Then $A\t B$ is $2$-connected.
%\end{cor}

%For the graph $G$, we call the $2$-cut $\{g_1,g_2\}$ minimal if neither $g_1$ nor $g_2$ is a separating vertex.

We deduce the following useful consequence.
\begin{cor}
	\label{cor:imf}
	Let $A,B$ be graphs such that $A\t B$ is $3$-connected. Then $|im(f)|\geq 2$, and if $B\not\simeq K_2$, then $|im(f)|\geq 3$.
\end{cor}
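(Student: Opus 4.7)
The plan is to proceed by contraposition, exhibiting a separating vertex when $|\ii(f)|=1$ and a $2$-cut when $|\ii(f)|=2$ and $B\not\simeq K_2$. Throughout I use that $A$ and $B$ must be connected and non-trivial: connectedness follows from \cite[Proposition 2.12]{kpzz19}, and if either factor were trivial then $A\t B$ would reduce to the other factor, which cannot then exhibit the prescribed image size under a $3$-connectivity hypothesis.

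For the first part, suppose $\ii(f)=\{b\}$. Then for every $a\in V(A)$, every neighbour $a'$ of $a$ satisfies $f(a')=b$, so condition \ref{eq:sepiii} of Theorem \ref{thm:sep} is met at $(a,b)$. Hence $(a,b)$ is a separating vertex of $A\t B$, contradicting $3$-connectivity.

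For the second part, suppose $\ii(f)=\{b_1,b_2\}$ with $b_1\neq b_2$, and $B\not\simeq K_2$. The key structural observation is that, by the definition of the Sierpi\'{n}ski product, any edge with exactly one endpoint in a copy $aB$ has the form $(a,f(a'))(a',f(a))$ with $aa'\in E(A)$, so within $aB$ such bridging edges can only attach at vertices of the form $(a,f(a'))\in\{(a,b_1),(a,b_2)\}$. Since $B$ is connected and distinct from $K_2$, we have $|V(B)|\geq 3$, hence $V(B)\setminus\{b_1,b_2\}\neq\emptyset$; and since $A$ is non-trivial, $V(A\t B)\setminus V(aB)\neq\emptyset$. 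Removing the two vertices $(a,b_1)$ and $(a,b_2)$ therefore disconnects the non-empty set $V(aB)\setminus\{(a,b_1),(a,b_2)\}$ from the (non-empty) rest of $A\t B$, so $\{(a,b_1),(a,b_2)\}$ is a $2$-cut, contradicting $3$-connectivity.

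The main (modest) subtlety is that Theorem \ref{thm:sep} characterises only separating vertices, so the second half of the corollary cannot be reduced to a black-box application of it; one needs the direct observation on bridging edges above. The first half, by contrast, is an immediate invocation of condition \ref{eq:sepiii}.
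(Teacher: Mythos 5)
Your proposal is correct and follows essentially the same route as the paper: the first half is the same invocation of condition \ref{eq:sepiii} of Theorem \ref{thm:sep}, and the second half rests on the same key observation that the only vertices of $aB$ with neighbours outside $aB$ are $(a,b_1)$ and $(a,b_2)$ (the paper phrases this as separating $(a,b)$ from $(a',b)$ for a vertex $b$ of $B-b_1-b_2$, while you phrase it as disconnecting $V(aB)\setminus\{(a,b_1),(a,b_2)\}$ from the rest, which is the same argument). Your aside about non-trivial factors is a reasonable observation about an implicit hypothesis, but it does not change the substance.
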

\begin{proof}
	By contradiction, let $im(f)=\{b\}$. By Theorem \ref{thm:sep}, for every $a\in V(A)$ the vertex $(a,b)$ is a separating vertex of $A\t B$, contradiction.
	
	To prove the second statement, again by contradiction, let $|V(B)|\geq 3$ and \[im(f)=\{b_1,b_2\}.\]
	The graph $B-b_1-b_2$ contains at least one vertex, hence there exists a connected component $J$ of $B-b_1-b_2$ containing no element of $im(f)$. Let $b\in V(J)$ and consider the vertices
	\[(a,b), (a',b), \quad a\neq a'\in V(A).\]
	Since $im(f)=\{b_1,b_2\}$ and $a\neq a'$, every $(a,b)(a',b)$-path contains one of $(a,b_1),(a,b_2)$, hence $A\t B$ has a $2$-cut, contradiction.
\end{proof}

We now turn to proving Theorem \ref{thm:kc}.

\begin{prop}
	\label{prop:kc}
Let $A,B$ be connected, non-trivial graphs, and $f:V(A)\to V(B)$ a function. Suppose that
\[\{(a_1,b_1),(a_2,b_2),\dots,(a_k,b_k)\}, \qquad k\geq 1\]
is a minimal $k$-cut of $A\t B$. % but none of $(a_1,b_1)$, $(a_2,b_2)$, \dots, $(a_k,b_k)$ are separating vertices.
Then either
\[\{a_1,a_2,\dots,a_k\}\]
is a $k$-cut of $A$, or $a_1=a_2=\dots=a_k$ and 
\[\{b_1,b_2,\dots,b_k\}\]
is a $k$-cut of $B$, or $a_1=a_2=\dots=a_k=:a$ and for every neighbour $a'$ of $a$, we have
\[f(a')\in\{b_1,b_2,\dots,b_k\}.\]
\end{prop}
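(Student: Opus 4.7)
The plan is to argue by contradiction: assume $W=\{(a_1,b_1),\dots,(a_k,b_k)\}$ is a minimal $k$-cut of $A\t B$ and that none of the three listed conclusions holds, then derive that $(A\t B)-W$ is in fact connected, contradicting the cut hypothesis. The main tool is Lemma \ref{le:path}: any path in $A$ that avoids $S:=\{a_1,\dots,a_k\}$ lifts to a path in $A\t B$ avoiding $W$, since the lifted path's first coordinates remain in the original $A$-path.

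First I would dichotomise on whether the $a_i$ all coincide. If they do not, then the second and third conclusions fail automatically and I must establish the first, namely that $S$ is a cut of $A$. Assuming the contrary, $A-S$ is connected, so by Lemma \ref{le:path} all vertices of $A\t B$ whose first coordinate lies outside $S$ form a single component $C_0$ of $(A\t B)-W$. For each $(a_i,b)\in(A\t B)-W$ with $a_i\in S$, the goal is to connect it to $C_0$ by first moving inside $a_iB-W\cong B-\{b_j:a_j=a_i\}$ to a \emph{jump point} $(a_i,f(a^*))$ with $a^*\in N_A(a_i)\setminus S$, then crossing the jump edge $(a_i,f(a^*))(a^*,f(a_i))$, whose other endpoint lies in $C_0$. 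Establishing the existence of such a jump point is the technical heart of this case, and relies on the minimality of $W$: a total blockage of all outward jumps from $a_iB$ would force the existence of a proper sub-cut of $W$, contradicting minimality. If instead all $a_i$ equal a common vertex $a$, then $W\subseteq aB$ and the $b_i$ are distinct, and the first conclusion reduces to ``$a$ separates $A$''. Assuming the three conclusions all fail, $A-a$ is connected, so $(A-a)\t B$ is connected by \cite[Proposition 2.12]{kpzz19}; $aB-W\cong B-\{b_1,\dots,b_k\}$ is connected by the failure of the second conclusion; and the failure of the third supplies $a'\in N_A(a)$ with $f(a')\notin\{b_1,\dots,b_k\}$, so that the jump edge $(a,f(a'))(a',f(a))\in(A\t B)-W$ glues the two pieces together. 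Thus $(A\t B)-W$ is connected, contradiction.

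The hardest part will be making the jump-point argument watertight in the non-all-equal case when some $a_i\in S$ has no neighbour in $V(A)\setminus S$ at all, a configuration that can occur even when $S$ is not a cut of $A$ (e.g.\ in pendant-type structures), or when $V(A)=S$ so that $C_0$ is empty. In such layouts I expect to rely on a careful iterative routing across the blocks $\{a_jB:a_j\in S\}$, combined with repeated use of the minimality of $W$ to discharge blockages and thereby exhibit a proper sub-cut of $W$, producing the desired contradiction.
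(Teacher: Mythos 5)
Your outline is logically sound and the all-$a_i$-equal case is handled correctly and completely: there you really do get $(A-a)\t B$ connected from \cite[Proposition 2.12]{kpzz19}, $aB-W$ connected from the failure of the second alternative, and a surviving jump edge from the failure of the third. The problem is that the other case --- the $a_i$ not all equal, where you must show that $S=\{a_1,\dots,a_k\}$ cuts $A$ --- is precisely the main content of the proposition, and your proposal does not prove it. You correctly identify the obstructions yourself (a block $a_iB$ whose only exits $(a_i,f(a^*))$ all lie in $W$ or all have $a^*\in S$; internal disconnection of $a_iB-W$; the degenerate situation $V(A)=S$ where $C_0$ is empty), but the resolution is left at the level of ``I expect to rely on a careful iterative routing \dots combined with repeated use of the minimality of $W$.'' The one concrete mechanism you name --- that a total blockage of $a_iB$ would exhibit $W\cap a_iB$ as a proper sub-cut of $W$ --- is not immediate either: a path from a trapped vertex of $a_iB-W$ could leave $a_iB$ through a non-$W$ exit and only meet $W$ later in some other block, so ``blocked from $C_0$'' does not directly translate into ``separated by $W\cap a_iB$.'' As written, the heart of the argument is a plan, not a proof.

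The paper avoids all of these difficulties by not attempting to prove global connectivity of $(A\t B)-W$. Instead it fixes one pair $(v,x),(w,y)$ that $W$ separates, takes an arbitrary $vw$-path $v=u_0,\dots,u_m=w$ in $A$, and lifts it via Lemma \ref{le:path} to a path $\calP$ whose vertices have first coordinates only among the $u_\ell$. Since every such $\calP$ must meet $W$, either some $a_i$ is forced to be an interior vertex of every $vw$-path in $A$ (giving the first alternative at once), or $v\in S$, say $v=a_1$, and the blocking must occur inside $a_1B$; minimality then collapses all the $(a_j,b_j)$ into $a_1B$, i.e.\ $a_1=\dots=a_k$, and the analysis descends into $B$ to produce the second or third alternative. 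Tracking a single separated pair along freely chosen liftings is what makes the case analysis close; if you want to salvage your contradiction framework, you should localise it in the same way rather than trying to connect every surviving vertex to every other.
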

\begin{proof}
There exist $v,w\in V(A)$ and $x,y\in V(B)$ such that every
\[(v,x)(w,y)\text{-path}\]
contains one of
\begin{equation}
\label{eq:kcut}
\{(a_1,b_1),(a_2,b_2),\dots,(a_k,b_k)\}.
\end{equation}
Via Lemma \ref{le:path}, we start with a $vw$-path in $A$
\[v=u_0,u_1,u_2,\dots,u_{m-1},w=u_m, \qquad m\geq 0,\]
and build a $(v,x)(w,y)$-path $\calP$ in $A\t B$. Every vertex of $\calP$ has first component equal to one of $u_0,u_1,\dots,u_m$. One of \eqref{eq:kcut} lies on $\calP$. Suppose that
\[\{a_1,a_2,\dots,a_k\}\cap\{v,w\}=\emptyset.\]
Then one of $a_1,a_2,\dots,a_k$ belongs to $\{u_1,u_2,\dots,u_{m-1}\}$. Therefore, every $uv$-path in $A$ contains one of $a_1,a_2,\dots,a_k$, thus $\{a_1,a_2,\dots,a_k\}$ is a $k$-cut in $A$, and the proof is complete in this case.

The only other option w.l.o.g.\ is that, in Lemma \ref{le:path}, in the construction of the path $\calP_{0}$ in $vB$ from $(v,b')$ to $(v,f(u_1))$, we must always include one of \eqref{eq:kcut}. That is to say, $v\in\{a_1,a_2,\dots,a_k\}$.  W.l.o.g.\ we may fix $a_1=v$. Thus every $(a_1,x)(a_1,f(u_1))$-path in $a_1B$ contains an element of \eqref{eq:kcut}. Since \eqref{eq:kcut} is a minimal $k$-cut, we obtain
\[a_1=a_2=\dots=a_k=:a.\]
Thereby, every
\[(a,x)(a,f(u_1))\text{-path}\]
contains one of $(a,b_1)$, $(a,b_2)$, \dots, $(a,b_k)$. Recall that $(v,x)\neq (a,b_1), (a,b_2), \dots, (a,b_k)$, hence $x\neq b_1,b_2,\dots,b_k$. Suppose that $f(u_1)\neq b_1,b_2,\dots,b_k$. Then in $B$ every $xf(u_1)$-path contains one of $b_1,b_2,\dots,b_k$, i.e.\ $\{b_1,b_2,\dots,b_k\}$ is a $k$-cut in $B$, and the proof is complete in this case also.

It remains to consider the case where $a:=a_1=a_2=\dots=a_k=v$, and moreover for each choice of $u_1$ in the construction of $\calP$, we have
\[f(u_1)\in \{b_1,b_2,\dots,b_k\}.\]
Here every neighbour of $a$ is mapped to one of $b_1,b_2,\dots,b_k$ under $f$. The proof is complete.
\end{proof}

We are is a position to prove Theorem \ref{thm:kc}.

\begin{proof}[Proof of Theorem \ref{thm:kc}]
The case $k=1$ follows from \cite[Proposition 2.12]{kpzz19}. For $k=2$, since $A,B$ do not have separating vertices, then conditions \ref{eq:sepi} and \ref{eq:sepii} of Theorem \ref{thm:sep} do not hold. Moreover, by the assumption on $f$, condition \ref{eq:sepiii} of Theorem \ref{thm:sep} does not hold either. By Theorem \ref{thm:sep}, $A\t B$ does not have separating vertices. The proof for the case $k=2$ is complete. Now for every $k\geq 3$, invoking Proposition \ref{prop:kc} we conclude that $A\t B$ cannot have a $k-1$-cut, hence it is $k$-connected.
\end{proof}

We are now ready to prove Theorem \ref{thm:siepol}, characterising polyhedral Sierpi\'{n}ski products with polyhedral factors.

\begin{proof}[Proof of Theorem \ref{thm:siepol}]
$\Leftarrow$. By \cite[Theorem 2.16]{kpzz19}, $A\t B$ is a planar graph. Since for every $a\in V(A)$ at least three of \eqref{eq:images} are distinct, we may conclude by Theorem \ref{thm:kc} that $A\t B$ is $3$-connected.

$\Rightarrow$. By \cite[Theorem 2.16]{kpzz19}, for every $a\in V(A)$ there exists a face $\calR_a$ of $B$ containing the vertices
\begin{equation}	
	\label{eq:imagespf}
	f(u_1),f(u_2),\dots,f(u_{\deg_A(a)}),
\end{equation}
in this cyclic order around the contour of $\calR_a$, where
\[u_1,u_2,\dots,u_{\deg_A(a)}\]
are the neighbours of $a$ in this cyclic order in the planar embedding of $A$. Now by contradiction, suppose that only one or two of \eqref{eq:imagespf} are distinct. Similarly to the proof of Theorem \ref{thm:sep}, take $(a,x)\in V(A\t B)$, $x$ not belonging to \eqref{eq:imagespf}, and any $(v,y)\in V(A\t B)$ with $v\neq a$. By definition of Sierpi\'nski product, every $(a,x)(v,y)$-path contains one of \eqref{eq:imagespf}, so that $A\t B$ has a $2$-cut, contradiction.
\end{proof}

\subsection{Consequences and applications}
Given a cubic polyhedron $A$, the following corollary shows how to find $B$ and $f$ such that $A\t B$ is a polyhedron.

\begin{cor}
	Let $A$ be a cubic polyhedron, and $B$ the $|V(A)|$-gonal pyramid. Then for every bijection $f$ between $V(A)$ and the non-central vertices of $B$, the graph
	\[A\t B\]
	is a polyhedron.
\end{cor}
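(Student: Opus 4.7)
The plan is to apply Theorem \ref{thm:siepol} directly. Set $n := |V(A)|$; since $A$ is a polyhedron we have $n \geq 4$, and $B$ is the wheel $W_n$ consisting of a central apex vertex $c$ joined to all vertices of a rim $n$-cycle $v_1 v_2 \cdots v_n$, the rim being precisely the set of non-central vertices.

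First I would check the standing hypotheses of Theorem \ref{thm:siepol}. The wheel $W_n$ is well known to be $3$-connected and planar, so $B$ is itself a polyhedron; $A$ is a polyhedron by assumption. The map $f$ is a bijection, hence injective, hence locally injective.

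Next I would exhibit, for each $a \in V(A)$, a face $\calR_a$ of $B$ with the cyclic-order property required by the theorem. I would take the same face for every $a$, namely the unique $n$-gonal face of $B$, whose contour is the rim cycle $v_1 v_2 \cdots v_n$. The entire set of non-central vertices lies on the contour of this face. Since $A$ is cubic and $f$ is a bijection, the three images $f(u_1), f(u_2), f(u_3)$ of the three neighbours of $a$ are pairwise distinct and all lie on the contour of $\calR_a$, so in particular at least three of them are distinct.

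The only point that requires a moment of care is the cyclic-order matching. Any three distinct elements of a cycle admit a unique cyclic ordering up to reversal, and by the convention fixed in the preliminaries cyclic orders around a vertex or around a face may be read either clockwise or counterclockwise. Hence $f(u_1), f(u_2), f(u_3)$ automatically appear around the contour of $\calR_a$ in the same cyclic order as $u_1, u_2, u_3$ appear around $a$ in the fixed planar embedding of $A$. All hypotheses of Theorem \ref{thm:siepol} are thus verified, and we conclude that $A \t B$ is a polyhedron. No serious obstacle arises; the corollary reduces to a direct application of the theorem.
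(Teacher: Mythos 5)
Your proof is correct and follows essentially the same route as the paper: both arguments place the three images of the neighbours of each $a$ on the contour of the $n$-gonal base face, note that three distinct points on a cycle automatically realise any prescribed cyclic order (up to the allowed reversal), and then invoke Theorem \ref{thm:siepol}. Your write-up is merely more explicit about the local injectivity of $f$ and the cyclic-order matching than the paper's two-line version.
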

\begin{proof}
Since every vertex of $a$ has degree $3$, then the base of the pyramid $B$ contains $f(a_1)$, $f(a_2)$, and $f(a_3)$ in cyclic order, where $a_1,a_2,a_3$ are the neighbours of $a$. Moreover, $A,B$ are $3$-connected and $f$ is a bijection, hence by Theorem \ref{thm:siepol} $A\t B$ is a polyhedron.
\end{proof}

We may strengthen the above result to polyhedra where the vertices of degree four or higher are not too closely connected. Henceforth in the graph $G$ the quantity $d_G(u,v)$ denotes the distance between the vertices $u,v$.
\begin{cor}
	\label{cor:tree}
Let $A$ be a polyhedron and $B$ an $n$-gonal pyramid with $n\geq |V(A)|$. Let $A'$ be the graph with
\[V(A')=\{v\in V(A) : \deg_A(v)\geq 4\}\]
and
\[E(A')=\{vw : v,w\in V(A), d_A(v,w)\leq 2\}.\]
If $A'$ is a forest, then there exists an injection
\[f:V(A)\to V(B)\]
such that
\[A\t B\]
is a polyhedron.
\end{cor}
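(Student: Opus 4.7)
The approach is to apply Theorem \ref{thm:siepol} by constructing an injection $f: V(A) \to V(B)$ such that for every $a \in V(A)$, the images under $f$ of the neighbours of $a$ lie on a common face of $B$ in the cyclic order matching that around $a$ in $A$'s planar embedding. Since $B$ has only one face of size greater than $3$, namely its base (an $n$-cycle containing all non-apex vertices), every high-degree vertex $a \in V(A')$ must have the images of its neighbours placed on the base in a prescribed cyclic order. For vertices $a$ of degree $3$, the condition is automatic, because any three distinct points on a cycle lie in a unique cyclic order up to reversal, which matches that around $a$ in $A$ (cyclic orders being considered up to reversal). We will therefore inject $V(A)$ into the base of $B$ (which is possible since $n \geq |V(A)|$), leaving the apex unused.

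My plan is to build the cyclic arrangement on the base by processing the components of the forest $A'$ one at a time. For each tree $T$ of $A'$, root it at some vertex $v_0$ and insert $v_0$ together with its neighbours in $A$ into consecutive free positions on the base, with the neighbours placed in the cyclic order around $v_0$ dictated by $A$'s embedding. Traverse $T$ by depth-first search: when moving from a processed high-degree vertex $v$ to a child $w$ in $T$, we have $d_A(v, w) \leq 2$, so $v$ and $w$ share at least one already-placed vertex $u$---either $u = v$ if $vw \in E(A)$, or a common neighbour $u$ of $v$ and $w$ in $A$ if $d_A(v, w) = 2$. Using $u$ as an anchor, insert the as-yet unplaced vertices of $\{w\}$ together with the neighbours of $w$ in $A$ at positions adjacent to $u$ on the base, respecting the cyclic order around $w$ in $A$'s embedding.

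The main obstacle is to verify that these insertions never clash with previously placed vertices. Suppose that some neighbour $u'$ of $w$ in $A$ with $u' \neq u$ is already on the base. Then $u'$ was placed during the processing of some earlier high-degree vertex $v'$ with $v' u' \in E(A)$, whence $d_A(v', w) \leq 2$ and $v' w \in E(A')$. Since $A'$ is a forest, both $v'$ and $w$ lie in the same tree $T$, and $v' w$ is an edge of $T$. In the depth-first traversal of $T$, the only neighbour of $w$ in $T$ processed before $w$ is its parent $v$, so $v' = v$ and $u'$ is a common neighbour of $v$ and $w$ in $A$. Since the edge $vw$ lies on at most two faces of $A$'s planar embedding, there are at most two such common neighbours, and their positions inherited from the placement of $v$'s neighbours are compatible with $w$'s cyclic order by the coherence of $A$'s planar embedding. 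Thus we can always insert $w$'s remaining neighbours into the free arcs of the base to respect $w$'s cyclic order, and the forest hypothesis is exactly what rules out additional pre-existing constraints.

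After all trees of $A'$ are processed, insert the remaining vertices of $V(A)$ into arbitrary unused base positions. The resulting injection $f: V(A) \to V(B)$ satisfies the hypotheses of Theorem \ref{thm:siepol}: the cyclic-order condition for each high-degree vertex is enforced by construction, while for each degree-$3$ vertex it holds automatically. Therefore $A \t B$ is a polyhedron.
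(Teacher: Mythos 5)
Your proposal is correct and follows essentially the same route as the paper: both reduce to Theorem \ref{thm:siepol}, observe that the cyclic-order condition is automatic for degree-$3$ vertices, and build an injective placement of $V(A)$ on the base of the pyramid by processing the forest $A'$ tree by tree, merging the rotations of adjacent high-degree vertices at their shared already-placed neighbours and using planarity to guarantee those shared neighbours occur in compatible relative cyclic order. (One small slip: two adjacent high-degree vertices $v,w$ can have more than two common neighbours when the extra ones have degree $3$ --- only those lying on the two faces at the edge $vw$ are bounded by two --- but this is harmless, since the compatibility you actually need is the planarity coherence of the common neighbours' relative order, which you also invoke and which is exactly the paper's justification.)
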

\begin{proof}
We build a list with the elements of $V(A)$ recursively in the following way. Given a tree $T$ of $A'$, we start by considering two adjacent vertices $a_1,a_2$. We list the neighbours $\neq a_2$ of $a_1$ in $A$ in counter-clockwise order
\begin{equation}
\label{eq:na1}
u_1,u_2,\dots,u_{\deg_A(a_1)}.
\end{equation}
By assumption there exists a labeling of the neighbours $\neq a_1$ of $a_2$ in $A$ in clockwise order
\begin{equation}
	\label{eq:na2}
	v_1,v_2,\dots,v_{\deg_A(a_2)}
\end{equation}
such that there exist
\[i_1<i_2<\dots<i_k, \quad j_1<j_2<\dots<j_k, \quad k\geq 0,\]
with
$u_{i_1}=v_{j_1}$, $u_{i_2}=v_{j_2}$, \dots, and $u_{i_k}=v_{j_k}$, where $k$ is the number of common neighbours of $a_1,a_2$ in $A$. The reader may refer to Figure \ref{fig:tree}. If one set of indices is in ascending order, then the other set is in ascending order as well due to planarity of $A$. We build a new list by combining \eqref{eq:na1} and \eqref{eq:na2},
\begin{align}
\label{eq:longlist}
\notag
u_1,u_2,\dots,u_{i_1-1},v_1,v_2,\dots,v_{j_1-1},u_{i_1},u_{i_1+1},u_{i_1+2},\dots,u_{i_2-1},v_{j_1+1},v_{j_1+2},\dots,v_{j_2-1},u_{i_2},\dots,u_{i_k},\\u_{i_k+1},\dots,u_{\deg_A(a_1)},v_{j_k+1},\dots,v_{\deg_A(a_2)}.
\end{align}
In \eqref{eq:longlist}, the neighbours of $a_1$ appear in cyclic order, and likewise the neighbours of $a_2$. We then consider a neighbour $a_3$ of $a_1$ or of $a_2$ in $A'$, and combine \eqref{eq:longlist} with the list of neighbours of $a_3$ in $A$ taken in cyclic order. Since $T$ is a tree, we may write the list of neighbours of $a_3$ in $A$ so that common vertices with the combined list of neighbours of $a_1$ or $a_2$ appear in the same order.
\begin{figure}[ht]
\centering
\includegraphics[width=7.5cm]{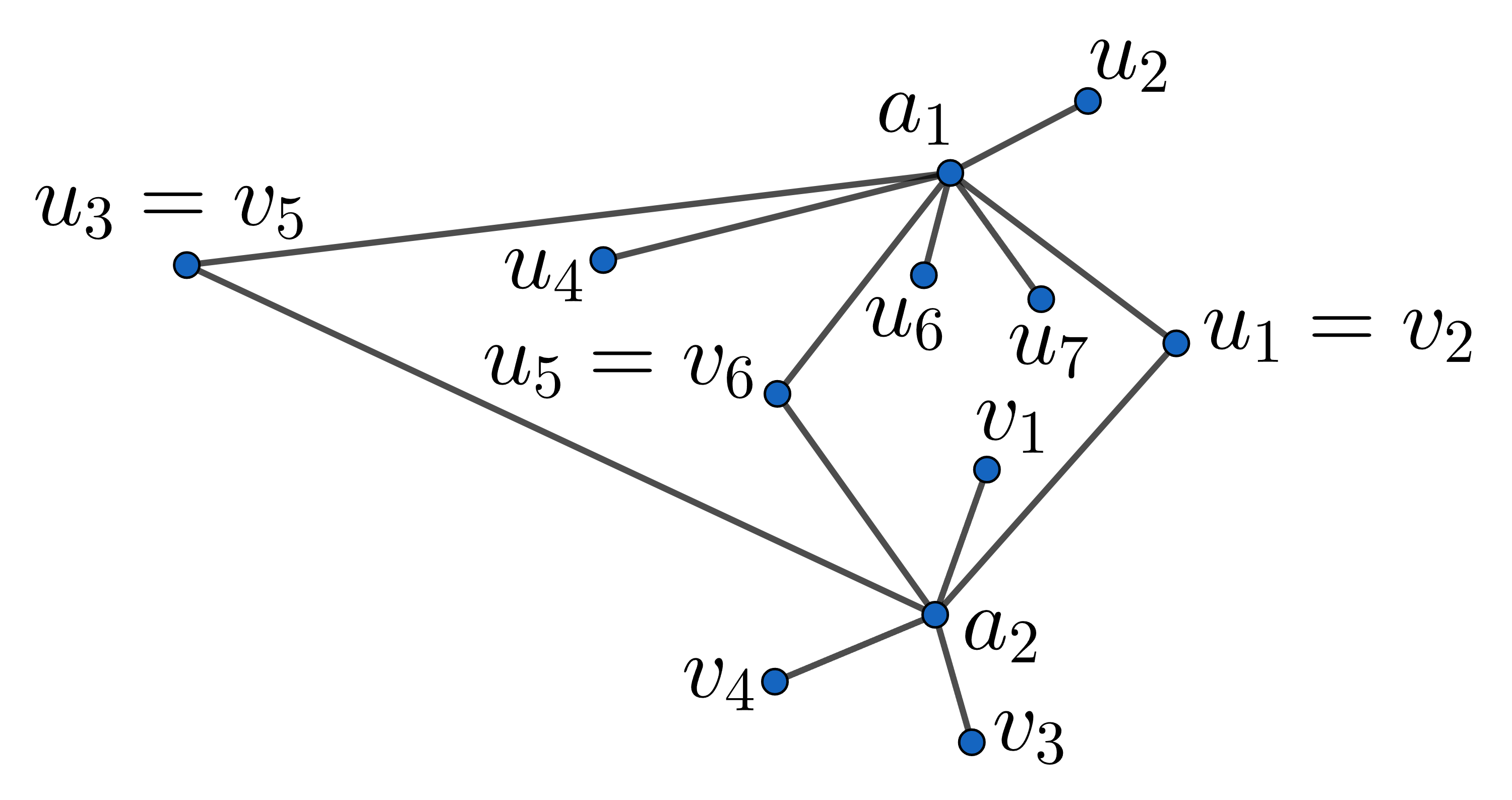}
\caption{Corollary \ref{cor:tree}}
\label{fig:tree}
\end{figure}

We continue in this fashion until all vertices of $T$ have been considered. We create such a list for each tree of $A'$, and then write them successively one after the other in one long list, and finally insert any remaining vertices of $A$ in any order. We have obtained the desired list with the elements of $V(A)$.

Let
\[[b_1,b_2,\dots,b_n]\]
be the base of the pyramid $B$. Consider any map $f$ from $V(A)$ to  $\{b_1,b_2,\dots,b_n\}$ such that the images of elements of $V(A)$ ordered according to our list have ascending indices. By Theorem \ref{thm:siepol}, $A\t B$ is a polyhedron.
\end{proof}

On the other hand, let $A$ be the triangular bipyramid, so that with the notation of Corollary \ref{cor:tree}, $A'$ is not a tree. If $f$ is a locally injective function, then there does not exist any graph $B$ such that $A\t B$ is a polyhedron. Indeed, let us label $u,v,w$ the vertices of degree $4$ in $A$, and $x,y$ those of degree $3$. According to Theorem \ref{thm:siepol}, there exists a region $\calR_x$ of $B$ containing $f(u),f(v),f(w)$. Moreover, $B$ has a region $\calR_u$ containing $f(x),f(v),f(y),f(w)$ in this order, a region $\calR_v$ containing $f(x),f(u),f(y),f(w)$ in this order, and a region $\calR_w$ containing $f(x),f(u),f(y),f(v)$ in this order. Since each pair of $\calR_u,\calR_v,\calR_w$ share three vertices, in fact they coincide, thus $f(u),f(v),f(w),f(x),f(y)$ all lie on one region $\calR$ of $B$. However, there is no ordering of these five vertices around $\calR$ that respects all the above conditions.

\section{Classification of regular, planar Sierpi\'{n}ski products}
\label{sec:A}

Sections \ref{sec:A} and \ref{sec:class} develop the theory necessary to prove Theorem \ref{thm:class} classifying the regular polyhedral Sierpi\'{n}ski products.% We start by analysing the classes of graphs $\ca(n_1,n_2,\dots,n_k)$ and $\cb_r(n_1,n_2,\dots,n_k)$ from Definitions \ref{def:A} and \ref{def:B}, that are also of independent interest.

\subsection{Preliminaries on $\ca(n_1,n_2,\dots,n_k)$}
Let us consider $\ca(n_1,n_2,\dots,n_k)$ as in Definition \ref{def:A}. By planarity we have
\begin{equation}
	\label{eq:sum5}
n_1+n_2+\dots+n_k\leq 5.
\end{equation}
%Note that the condition $n_1\geq n_2\geq\dots\geq n_k\geq 1$ \eqref{eq:colord} is not restrictive, for if $k\leq 3$, the cyclic order of the colours is always the same, whereas if $k\geq 4$, then $n_2=n_3=\dots=n_k=1$. In fact,
We will begin by showing that $k\leq 3$ always holds.
\begin{prop}
	\label{prop:emptya}
The classes $\ca(2,1,1,1)$, $\ca(1,1,1,1)$, and $\ca(1,1,1,1,1)$ are empty. In particular, if $A\in\ca(n_1,n_2,\dots,n_k)$, then $k\leq 3$.
\end{prop}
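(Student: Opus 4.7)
The claim reduces to showing that $\ca(1,1,1,1,1)$, $\ca(1,1,1,1)$, and $\ca(2,1,1,1)$ are all empty, since the planarity bound \eqref{eq:sum5}, the definitional cap $k\leq 5$, and the ordering $n_1\geq\dots\geq n_k\geq 1$ leave only these three tuples when $k\geq 4$. I would assume for contradiction that $A$ lies in one of these three classes, fix a planar embedding and a colouring as in Definition \ref{def:A}, and in each case derive an impossibility by combining a counting step with Euler's formula applied to the regular planar embedding.

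I would first handle the two cases in which every $n_i=1$. Here each vertex of $A$ has one neighbour of each of $k$ distinct colours, so the position of a neighbour in the cyclic order around any vertex is uniquely determined by its colour. Using this local rigidity I would propagate consistency along edges: across each edge $uv$, the two incident faces couple the cyclic colour orders at $u$ and $v$, and since $A$ is connected this forces a single cyclic colour sequence $\sigma$ shared (up to a global reflection) by all vertices. With $\sigma$ fixed, I trace a face boundary clockwise; at each intermediate vertex the previous colour $\sigma_{i-1}$ determines $\sigma_{i+1}$ via $\sigma$, and solving the resulting recurrence shows that every face of $A$ has length a multiple of $2k$. Combined with Euler's formula and $|E|=k|V|/2$, this gives
\[
|V| + 2 \leq |F| \leq \frac{2|E|}{2k} = \frac{|V|}{2},
\]
which is impossible for $|V|>0$. (For $k=5$ the stronger $|F|=2+3|V|/2$ makes the contradiction even sharper.) Hence $\ca(1,1,1,1)$ and $\ca(1,1,1,1,1)$ are empty.

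For $\ca(2,1,1,1)$ the doubled colour $c_1$ spoils the uniqueness-of-position step, because a $c_1$-neighbour of a vertex $w$ may sit in either of two cyclic slots, so the face-tracing recurrence becomes ambiguous when one enters $w$ along a $c_1$-edge. I would replace the uniform-rotation argument by a direct structural analysis of the induced subgraph on $V_{c_1}$, which is $2$-regular and hence a disjoint union of cycles drawn in the plane. Counting as in the previous step shows $|V_{c_1}|=2|V|/5$, while the $c_2$-, $c_3$-, $c_4$-neighbour functions give rigid matchings attaching the remaining $3|V|/5$ vertices to these $c_1$-cycles. Comparing the cyclic order of these attachments on the two sides of a $c_1$-cycle, using planarity, and combining with the face-tracing recurrence on the unambiguous colours $c_2,c_3,c_4$, I would reach a similar Euler-type contradiction. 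The main obstacle in the whole proof is precisely this last case: the rotation system is no longer uniform at $c_1$-vertices, so the face-length lower bound cannot be read off directly and must be extracted by hand from the geometry of the $c_1$-cycles in the plane.
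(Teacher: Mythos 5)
Your reduction to the three tuples $(1,1,1,1)$, $(2,1,1,1)$, $(1,1,1,1,1)$ is correct, and the Euler-formula endgame would work \emph{if} your intermediate claims held. But there are two genuine gaps. First, in the all-ones cases your entire argument rests on the assertion that connectivity forces a single cyclic colour sequence $\sigma$, shared by all vertices up to one global reflection, because ``the two incident faces couple the cyclic colour orders at $u$ and $v$.'' I do not see any such coupling: the two faces at an edge $uv$ involve the neighbours of $u$ adjacent to $v$ in the rotation at $u$ and the neighbours of $v$ adjacent to $u$ in the rotation at $v$, and these are different vertices with no colour constraint linking them. Nothing in Definition \ref{def:A} prevents $u$ from seeing $c_1,c_2,c_3,c_4$ clockwise while its neighbour $v$ sees $c_1,c_3,c_2,c_4$. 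Without uniformity of $\sigma$ the face-tracing recurrence $\mathrm{col}(w_{i+1})=\mathrm{succ}_\sigma(\mathrm{col}(w_{i-1}))$ is not well defined, the divisibility of face lengths by $2k$ fails (all one gets locally is that vertices two apart on a face boundary have distinct colours, which does not even exclude triangles), and the inequality $|V|+2\leq |F|\leq |V|/2$ collapses. This step needs either a proof or a different route.

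Second, for $\ca(2,1,1,1)$ you do not actually give an argument: you describe an intention to analyse the $2$-regular subgraph on the $c_1$-vertices, invoke ``rigid matchings'' and planarity, and ``reach a similar Euler-type contradiction,'' and you yourself flag this as the main unresolved obstacle. As it stands this case is unproved. For comparison, the paper handles $\ca(2,1,1,1)$ by a concrete count: using that same-coloured neighbours are consecutive in the rotation (itself a fact that requires proof, cf.\ Lemma \ref{le:cy}), it bounds the number of triangular faces via the admissible colour patterns on a triangle, and feeds this into Euler's formula to get $p\leq -120/7$. The all-ones cases are then dispatched by a planarity/infinite-descent argument with nested monochromatic-pattern paths crossing a canonical cycle (and $\ca(1,1,1,1,1)$ alternatively by recolouring into $\ca(2,1,1,1)$). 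You would need to either justify the uniform-rotation claim rigorously or switch to arguments of this kind; as written, the proposal does not establish the proposition.
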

\begin{proof}
By contradiction, let $A\in\ca(2,1,1,1)$. Then $A$ is a $5$-regular, connected, plane $(p,q)$-graph on $r$ regions, with colours assigned to the vertices so that each has neighbours coloured $c_1,c_1,c_2,c_3,c_4$ in this cyclic order. Counting the neighbours of each colour for all vertices, we deduce that in $A$ there are $2p/5$ vertices of colour $c_1$, and $p/5$ of each other colour.

We wish to give an upper bound to the number $r_3$ of triangular regions in $A$. First, no triangular region may have two vertices of the same colour different from $c_1$, otherwise the third vertex would have two neighbours of the same colour different from $c_1$. Second, no triangular region may have two vertices of colours $c_i,c_j$ where $j-i\equiv 2\pmod 4$, otherwise these would appear consecutively in the cyclic order of neighbours around the third vertex. Thereby, a triangular region in $A$ may only have vertices of colours
\[c_1,c_1,c_1, \quad c_1,c_1,c_2, \quad\text{or}\quad c_1,c_1,c_4.\]
Recalling that the cyclic order of colours of neighbours around each vertex is $c_1,c_1,c_2,c_3,c_4$, it follows that every vertex of colour $c_1$ belongs to at most three triangular regions, every vertex of colour $c_2$ or $c_4$ belongs to at most one triangular region, and vertices of colour $c_3$ cannot belong to any triangular regions, as illustrated in Figure \ref{fig:illu}.
\begin{figure}[ht]
\centering
\includegraphics[width=3.5cm]{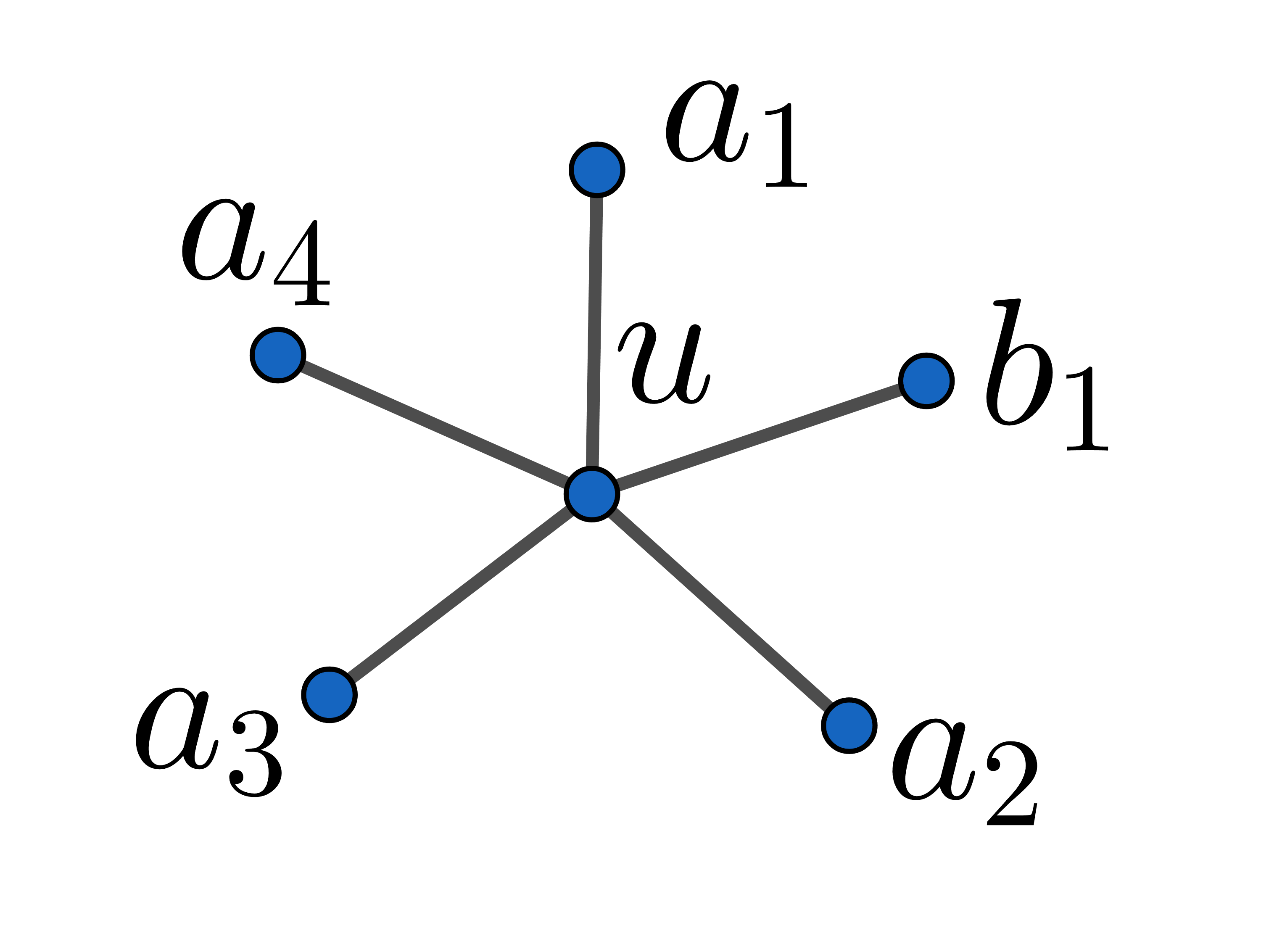}
\caption{The vertices $a_i$ have the colour $i$, and $b_1$ has the colour $1$. If $u$ has the colour $2$, then e.g.\ $b_1,a_2$ are not adjacent, otherwise $b_1$ would have two neighbours of colour $c_2$.}
\label{fig:illu}
\end{figure}

Therefore,
\[3r_3=\sum_{v\in V(G)}|\{\text{triangular regions containing }v\}|\leq 3\cdot\frac{2p}{5}+1\cdot\frac{p}{5}+0\cdot\frac{p}{5}+1\cdot\frac{p}{5}=\frac{8p}{5}.\]
By the handshaking lemma, $5p=2q$, so that by Euler's formula the total number of regions in $A$ is $3p/2+2$. We deduce a lower bound for the number of non-triangular regions,
\[r-r_3=\frac{3p}{2}+2-r_3\geq\frac{3p}{2}+2-\frac{8p}{15}=\frac{29p}{30}+2.\]
In any planar graph, the number of edges $q$ cannot exceed $3p-6$ minus the number of non-triangular regions. We obtain
\[\frac{5p}{2}=q\leq 3p-6-\left(\frac{29p}{30}+2\right),\]
leading to $p\leq -120/7$, contradiction. Hence $\ca(2,1,1,1)=\emptyset$.

Now let $A\in\ca(1,1,1,1)$ or $A\in\ca(1,1,1,1,1)$. The neighbours of each $a\in V(A)$ have the colours $c_1,c_2,c_3,c_4$ or $c_1,c_2,c_3,c_4,c_5$ in cyclic order. We construct a cycle $C$ of length $\ell\geq 4$ in $A$ in the following way. We take any vertex $u_1$ of colour $c_1$, its neighbour $u_2$ of colour $c_2$, and for each $u_i$, $i\geq 2$ of colour $c_i$, its neighbour $u_{i+1}$ of colour $c_{(i\mod 4)+1}$. Each vertex has only one neighbour of each colour, hence there is only one possible choice of vertex to add at every step when building $C$. Since $A$ is finite, we eventually return to $u_1$.

Now $u_2$ has neighbours $u_1,u_{3}$ of colours $c_1$ and $c_3$ on $C$, and $v,w$ respectively of colours $c_2$ and $c_4$, on opposite sides of $C$ (one internal and one external). We consider a path $P_2$ starting at $u_2$ where the sequence of colours of the vertices is
\[c_2,c_2,c_4,c_4,c_2,c_2,c_4,c_4,\dots,c_2,c_2,c_4,c_4.\]
Again since each vertex has only one neighbour of each colour, there is only one possible choice of vertex to add at every step when building $P_2$. Since $A$ is finite, we eventually return to $u_2$, and the last vertex added to $P_2$ before returning to $u_2$ is $w$. As $v,w$ are on opposite sided of $C$, by planarity $P_2$ must necessarily contain an element of $C$ other than $u_2$, say $u_{j_2}$, where $j_2\geq 4$. The reader may refer to Figure \ref{fig:1234}.
\begin{figure}[ht]
\centering
\includegraphics[width=4.5cm]{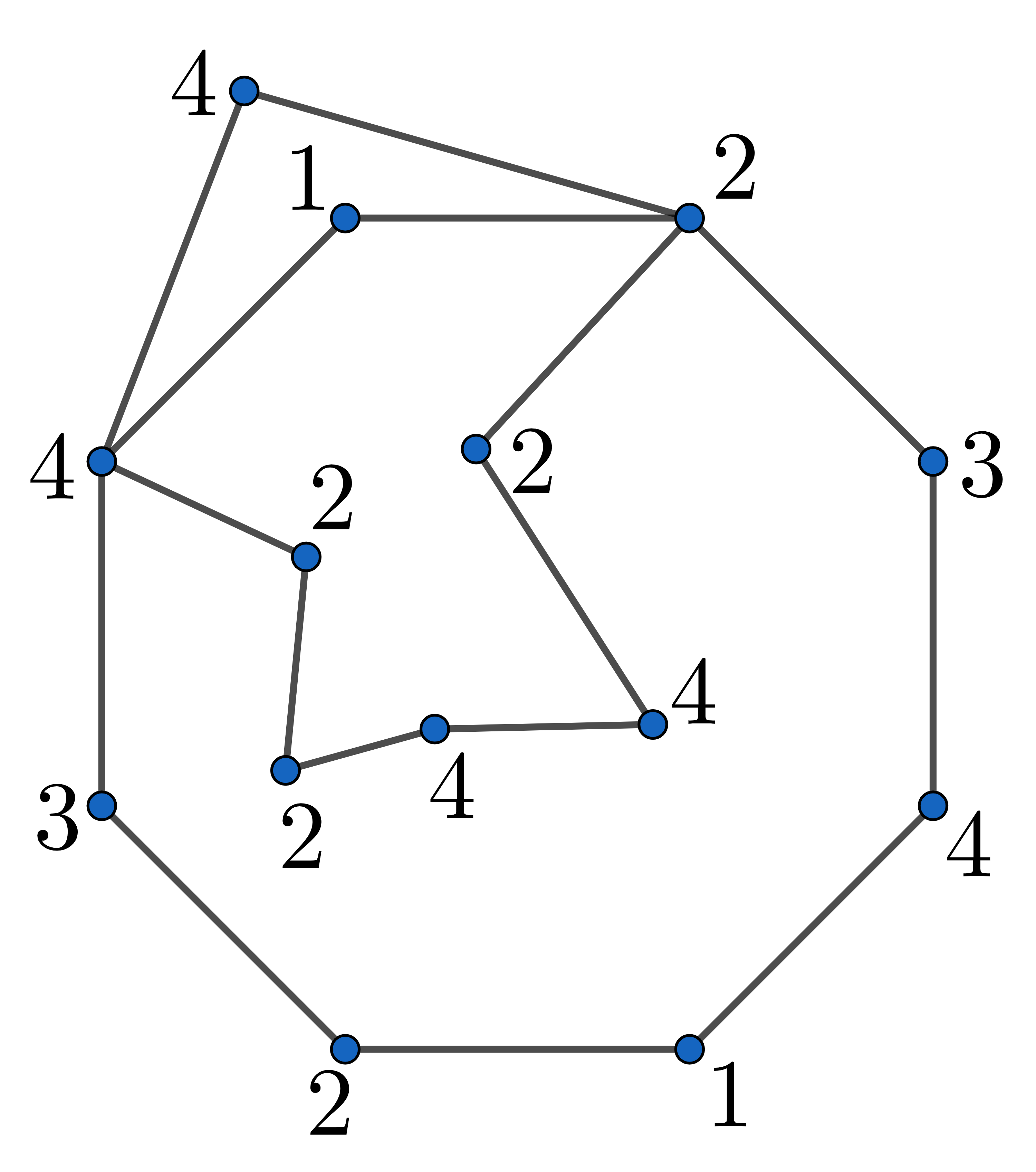}
\caption{Labels represent colours of vertices. Here $j_2=8$.}
\label{fig:1234}
\end{figure}

Next, we consider a path $P_3$ starting at $u_3$, % with second vertex on the same side as $v$ with respect to the cycle $C$, and
where the sequence of colours of the vertices is
\[c_3,c_3,c_1,c_1,c_3,c_3,c_1,c_1,\dots,c_3,c_3,c_1,c_1.\]
%The initial $3$ is present if and only if the second vertex on $P_3$ is of the colour $c_3$.
The path $P_3$ must necessarily contain an element of $C$ other than $u_3$, say $u_{j_3}$, where $j_3\geq 5$. By construction, $P_2$ and $P_3$ are disjoint. Hence by planarity, they cannot cross. Therefore, $j_3<j_2$. Next, we consider a path $P_4$ starting at $u_4$, and we continue in this fashion. We obtain a decreasing sequence
\[\ell\geq j_2>j_3>j_4>\dots>j_i\geq i+2\]
for every $i\geq 2$, contradiction. Hence $\ca(1,1,1,1)$ and $\ca(1,1,1,1,1)$ are indeed empty.

We point out a couple of alternative arguments to prove that $\ca(1,1,1,1,1)$ is empty. Given a graph in $\ca(1,1,1,1,1)$, by recolouring the $c_5$ vertices  with the colour $c_1$, we get a member of $\ca(2,1,1,1)$, that we have already ruled out. Alternatively, replicating the above argument for $\ca(2,1,1,1)=\emptyset$, one shows that a member of $\ca(1,1,1,1,1)$ is a $5$-regular, planar graph with no triangular regions, which is impossible.
\end{proof}

Henceforth we will refer interchangeably to the three colours as $c_1,c_2,c_3$ or as red, blue, green respectively. The following lemma shows that in every element of $\ca(n_1,n_2,\dots,n_k)$, around each vertex the neighbours of the same colour are consecutive in the cyclic order.
\begin{lemma}
	\label{le:cy}
	Let $A\in\ca(n_1,n_2,\dots,n_k)$. Then for every $a\in V(A)$ the cyclic order of the colours of the neighbours arranged around $a$ is
	\[\underbrace{c_1,c_1,\dots,c_1}_{n_1},\underbrace{c_2,c_2,\dots,c_2}_{n_2},\dots,\underbrace{c_k,c_k,\dots,c_k}_{n_k}.\]
\end{lemma}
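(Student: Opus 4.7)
My approach is a case analysis on the tuple $(n_1,\dots,n_k)$. By Proposition~\ref{prop:emptya} we have $k\le 3$, and by planarity the common vertex degree $d := n_1+\cdots+n_k$ is at most $5$. Several cases are immediate: when $d\le 3$ (namely $k=1$ with $n_1\le 3$, and the tuples $(2,1)$, $(1,1)$, $(1,1,1)$) any cyclic arrangement of at most three items has equal ones consecutive; likewise for $(3,1)$, $(4,1)$, and $k=1$ with $n_1\in\{4,5\}$, where only one vertex carries the minority colour so the $c_1$-neighbours cannot be split into two cyclic arcs. This leaves five non-trivial tuples to check: $(2,2)$, $(2,1,1)$, $(3,2)$, $(3,1,1)$, $(2,2,1)$.

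For each remaining tuple I would argue by contradiction. Assuming the colours of the neighbours of $a$ in cyclic order do not form the block pattern $c_1^{n_1}c_2^{n_2}\cdots c_k^{n_k}$, there exist two neighbours $u,v$ of $a$ of some common colour $c_i$ that are cyclically separated on both sides by a neighbour of a different colour. Imitating the path-switching technique from the proof of Proposition~\ref{prop:emptya}, I would start at $u$ and at $v$ and build walks that follow a prescribed sequence of colour transitions dictated by the colours appearing strictly between $u$ and $v$ on each side. Whenever the next desired colour has multiplicity $1$ in $\ca(n_1,\dots,n_k)$, the next vertex of the walk is uniquely determined; whenever it has multiplicity at least $2$, the corresponding monochromatic subgraph of $A$ is $2$-regular (hence a disjoint union of cycles) and still forces each walk to close up after finitely many steps. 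The two resulting closed walks lie in the two distinct cyclic sectors of the planar embedding at $a$ determined by the pair $\{u,v\}$, and planarity forces them to cross, giving the required contradiction.

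The step I expect to be the main obstacle is the analysis for $(2,2)$ and $(2,2,1)$, where two separate colours each have multiplicity $\ge 2$ so the colour-switching walks have genuine choices of next vertex at every step. For these I would exploit the global structure of the monochromatic subgraphs (each $2$-regular) together with the bipartite $c_1c_2$-subgraph (also $2$-regular, hence a disjoint union of even cycles) to extract a pair of disjoint closed curves with the required separating property in the planar embedding. A secondary obstacle is bookkeeping the reduction from "the arrangement around $a$ is not of the stated form" to the clean statement that some colour $c_i$ has two neighbours cyclically isolated from each other; here I would use that the cyclic sequence has at most $k$ distinct letters and that any non-block cyclic word on $k$ letters contains some letter appearing in at least two maximal monochromatic arcs.
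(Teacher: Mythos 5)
Your overall route is genuinely different from the paper's, and as it stands it has two real gaps. The paper does not split into cases on $(n_1,\dots,n_k)$ at all: assuming some vertex has two $c_1$-neighbours lying in different maximal monochromatic arcs (which already forces $2\le n_1\le 3$), it passes to the $c_1$-monochromatic subgraph, which is $n_1$-regular and hence contains an innermost $c_1$-cycle $C$ (no $c_1$-vertices strictly inside). A non-$c_1$ vertex $v$ inside $C$ must then have all of its $n_1\ge 2$ neighbours of colour $c_1$ on $C$ itself, and the contradiction comes from where the remaining neighbours of $v$ can sit relative to the two sub-regions that $v$ and $C$ cut out of the interior of $C$, using alternating-colour closed walks together with the degree bound $4\le\deg_A(v)\le 5$. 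This single argument covers all five of your ``non-trivial'' tuples, including $\ca(2,2)$ and $\ca(2,2,1)$, with no case distinction.

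The first gap in your proposal is exactly the one you flag: for $(2,2)$, $(3,2)$, $(2,2,1)$, $(3,1,1)$ the colour-switching walks have genuine choices at every step, and ``exploiting the global structure of the monochromatic subgraphs'' is a hope rather than an argument; these are precisely the cases the lemma is really about, so the proof is not complete where it matters. The second gap is in the planarity mechanism even where the walks are uniquely determined: two closed walks issuing from neighbours $u,v$ of $a$ that lie in different angular sectors at $a$ are not forced to cross, because a single vertex (or the union of the two edges $au$ and $av$) does not separate the plane, so the two sectors are not separated globally. To force a crossing you need a Jordan curve separating the two starting points; in the proof of Proposition \ref{prop:emptya} this role is played by the explicitly constructed cycle $C$ through the base vertex, and in the paper's proof of Lemma \ref{le:cy} by the innermost monochromatic cycle. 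Your sketch never produces such a separating cycle, so the concluding step ``planarity forces them to cross'' does not go through. (Your reduction to ``some colour occupies at least two maximal arcs,'' the appeal to Proposition \ref{prop:emptya} for $k\le 3$, and the disposal of the low-degree and single-minority tuples are all fine.)
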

\begin{proof}
	By contradiction, around every vertex we can find neighbours of colours $c_1,c_2,c_1,c_i$, $i\neq 1$ in this cyclic order (if $A$ is $5$-regular there is also a fifth neighbour). The subgraph of $A$ formed by the red vertices is either $2$- or $3$- regular, thus by finiteness of $A$ there exists a cycle
	\[C: u_1,u_2,\dots,u_n, \quad n\geq 3\]
	of red vertices such that inside $C$ there are no red vertices. Now $u_1$ has a neighbour $v$ inside $C$ (that is not red). Moreover, $v$ is adjacent to two red vertices, and since there are no red vertices inside of $C$, then by planarity in fact $v$ is adjacent to two vertices of $C$, say $u_1,u_j$, $2\leq j\leq n$. Now $v$ has non-red neighbours $y,z$, one each inside of the cycles
	\[u_1,v,u_j,u_{j-1},\dots,u_2,\]
	and
	\[u_1,v,u_j,u_{j+1},\dots,u_n.\]
	The edge $vy$ lies on a cycle where the colours of vertices are 
	\[c_2,c_2,c_i,c_i,c_2,c_2,c_i,c_i,\dots,c_2,c_2,c_i,c_i\]
	in order, and the same may be said for the edge $vz$. Since $4\leq \deg_A(v)\leq 5$ and $A$ is planar, this situation is impossible. The reader may refer to Figure \ref{fig:cy}.
	\begin{figure}[ht]
		\centering
		\includegraphics[width=4.5cm]{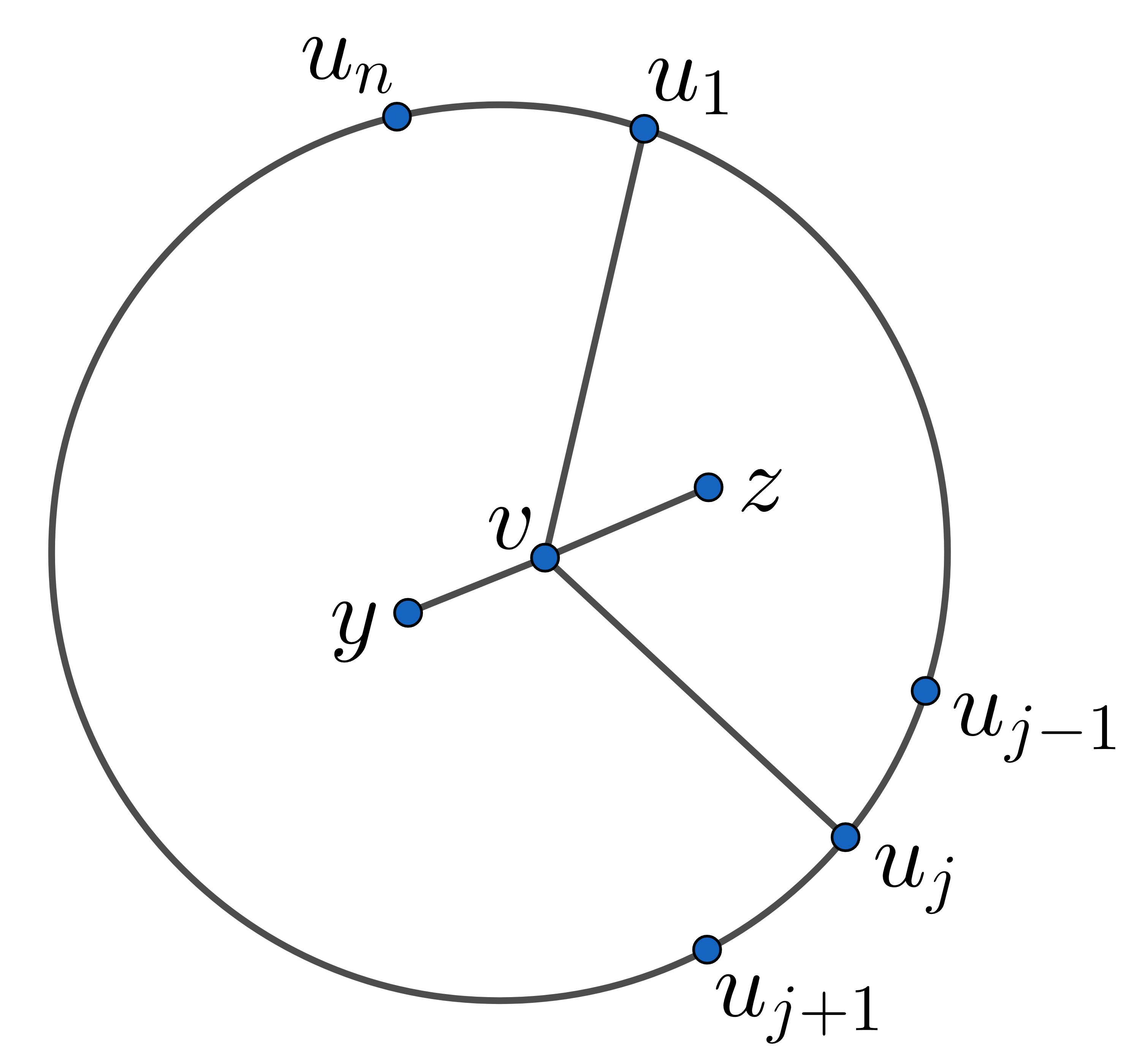}
		\caption{Lemma \ref{le:cy}.}
		\label{fig:cy}
	\end{figure}
	
\end{proof}

Let $A\in\ca(n_1,n_2,\dots,n_k)$. By Proposition \ref{prop:emptya}, we know that $k\leq 3$. Bearing in mind \eqref{eq:sum5}, for $k=1$ we have
\[(n_1)\in\{(1), (2), (3), (4), (5)\},\]
for $k=2$ we have
\[(n_1,n_2)\in\{(1,1), (2,1), (2,2), (3,1), (3,2), (4,1)\},\]
and for $k=3$ we have
\[(n_1,n_2,n_3)\in\{(1,1,1), (2,1,1), (2,2,1), (3,1,1)\}.\]
By Lemma \ref{le:cy}, in $\ca(2,1,1)$ around each vertex the cyclic order of colours of neighbours is red, red, blue, green, and in $\ca(2,2)$ it is red, red, blue, blue.

Let $A\in\ca(n_1,n_2,n_3)$. We change all vertices of colour $c_3$ to colour $c_1$, to see that $A\in\ca(n_1+n_3,n_2)$. That is to say,
$\ca(n_1,n_2,n_3)\subseteq\ca(n_1+n_3,n_2)$. By similar considerations, $\ca(n_1+n_2)\subseteq\ca(n_1+n_2)$, and for $n_1\geq n_2+n_3$, we have $\ca(n_1,n_2,n_3)\subseteq\ca(n_1,n_2+n_3)$. For instance, as mentioned in the introduction if in the member of $\ca(2,1,1)$ in Figure \ref{fig:A211} we replace all labels for the colour $c_3$ with $c_2$ we obtain a member of $\ca(2,2)$. Hence we have the chains of inclusions
\begin{align*}
	&\ca(1,1)\subseteq\ca(2),
	\\&\ca(1,1,1)\subseteq\ca(2,1)\subseteq\ca(3),
	\\&\ca(2,1,1)\subseteq\ca(2,2),\ca(3,1)\subseteq\ca(4),
	\\&\ca(2,2,1),\ca(3,1,1)\subseteq\ca(3,2),\ca(4,1)\subseteq\ca(5).
\end{align*}

Each of these classes of graphs appears in the construction of regular, connected, planar Sierpi\'{n}ski products (Appendix \ref{sec:appb}). Note that $\ca(n_1)$ is simply the full class of connected, planar, $n_1$-regular graphs. If $A\in\ca(1,1)$, then $A$ is simply a cycle of length a multiple of $4$, where the order of colours around the cycle is
\[c_1,c_1,c_2,c_2,c_1,c_1,c_2,c_2,\dots,c_1,c_1,c_2,c_2.\]
%At the end of this section, we will prove that $\ca(3,2)$ is empty, hence its subsets $\ca(2,2,1)$ and $\ca(3,1,1)$ are also empty. We will need it in the proof of Theorem \ref{thm:class}.

In Section \ref{sec:AA}, we will prove that $\ca(3,2)$ is empty, hence its subsets $\ca(2,2,1)$ and $\ca(3,1,1)$ are also empty. We will inspect properties for the remaining classes,
\begin{align}
	\label{eq:inf0}
	\notag&\ca(1,1,1)\subseteq\ca(2,1),
	\\\notag&\ca(2,1,1)\subseteq\ca(2,2),\ca(3,1),
	\\&\ca(4,1).
\end{align}
Apart from being intriguing on their own account, they will be used in the proof of Theorem \ref{thm:class}.

\subsection{Characterisation of regular, connected, planar Sierpi\'{n}ski products}
The following result is a necessary and sufficient condition on $A,B,f$ for $A\t B$ to be a regular, connected, planar Sierpi\'{n}ski product. The classification of these graphs is tabulated in Appendix \ref{sec:appb}. Recall Definitions \ref{def:A} for $\ca(n_1,n_2,\dots,n_k)$, \ref{def:B} for $\cb_r(n_1,n_2,\dots,n_k)$, and \ref{def:col} for a function that preserves colours.

\begin{prop}
	\label{prop:col}
	Let $A,B$ be graphs and $f$ a function. Then $A\t B$ is $r$-regular, connected, and planar if and only if there exist $1\leq k\leq 3$ and $n_1\geq n_2\geq\dots\geq n_k\geq 1$ such that
	\[A\in\ca(n_1,n_2,\dots,n_k), \qquad B\in\cb_r(n_1,n_2,\dots,n_k),\]
	and $f$ preserves colours.
	%and for each $1\leq i\leq k$, $B$ has a corresponding vertex of degree $r-n_i$, and the remaining of degree $r$.
	%Furthermore, if we assign to each vertex of $B$ of degree $r-n_i$ the colour $c_i$, then $f$ maps each vertex of $A$ to the vertex of $B$ of the same colour.
\end{prop}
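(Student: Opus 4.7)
The plan is to analyse degrees in $A\t B$ and combine the resulting structural constraints with the planar-embedding characterisation \cite[Theorem 2.16]{kpzz19} and Lemma \ref{le:cy}.

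For the backward direction, assume $A\in\ca(n_1,\dots,n_k)$, $B\in\cb_r(n_1,\dots,n_k)$, and $f$ preserves colours. Connectedness of $A\t B$ follows from \cite[Proposition 2.12]{kpzz19}. The degree of $(a,b)\in V(A\t B)$ equals
\[\deg_B(b)+|\{a'\in V(A) : aa'\in E(A),\ f(a')=b\}|.\]
If $b$ has colour $c_i$ in $B$, then $\deg_B(b)=r-n_i$ by Definition \ref{def:B}; since $f$ preserves colours and $a$ has exactly $n_i$ neighbours of colour $c_i$, the cardinality on the right equals $n_i$, giving a total of $r$. If $b$ has degree $r$ in $B$ then $b\notin\ii(f)$, the cardinality is $0$, and the degree is again $r$. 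Planarity of $A\t B$ then follows from \cite[Theorem 2.16]{kpzz19}: Lemma \ref{le:cy} guarantees that the neighbours of each $a\in V(A)$ appear in cyclic order with colours in contiguous blocks $c_1,\dots,c_1,c_2,\dots,c_k$, while Definition \ref{def:B} gives a planar embedding of $B$ in which the coloured vertices $b_1,\dots,b_k$ share a face; a compatible alignment then realises the cyclic-order condition of that theorem.

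For the forward direction, assume $A\t B$ is $r$-regular, connected, and planar. By \cite[Proposition 2.12 and Lemma 2.5]{kpzz19}, both $A$ and $B$ are connected and planar. The degree identity
\[r=\deg_B(b)+|\{a'\in V(A) : aa'\in E(A),\ f(a')=b\}|\]
forces (i) every $b\in V(B)\setminus\ii(f)$ to satisfy $\deg_B(b)=r$, and (ii) every $b\in\ii(f)$ to satisfy $\deg_B(b)<r$ with the cardinality on the right equal to $r-\deg_B(b)=:n_b$, independent of $a$. Set $k=|\ii(f)|$, label $\ii(f)=\{b_1,\dots,b_k\}$ so that $n_i:=n_{b_i}$ is non-increasing, and assign the colour $c_i$ to every $a\in V(A)$ with $f(a)=b_i$ and also to $b_i$ itself. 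Then $f$ preserves colours by construction, each $a$ has $n_i$ neighbours of colour $c_i$, and $A$ is a connected, planar, $(n_1+\dots+n_k)$-regular graph admitting the described colouring; hence $A\in\ca(n_1,\dots,n_k)$, and Proposition \ref{prop:emptya} gives $k\leq 3$. By (i) and (ii), $B$ has degree sequence $r,\dots,r,r-n_k,\dots,r-n_1$ with the prescribed colouring. Applying \cite[Theorem 2.16]{kpzz19} to the planar product, for any fixed $a\in V(A)$ the images $f(u_1),\dots,f(u_{\deg_A(a)})$ lie on a common face of $B$; by (ii) each $b_i$ is the image of at least one neighbour of $a$, so all of $b_1,\dots,b_k$ share that face, and thus $B\in\cb_r(n_1,\dots,n_k)$.

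The main technical hurdle lies in the planarity step of the backward direction, where the cyclic order of colours around each $a\in V(A)$ (from Lemma \ref{le:cy}) must be reconciled with the cyclic order in which $b_1,\dots,b_k$ appear around their shared face in $B$. This is handled by selecting an appropriate planar embedding of $B$ (possibly reflected) and, when several $n_i$ coincide, by permuting the colour labels accordingly so that the two cyclic orders agree.
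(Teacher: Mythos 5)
Your overall skeleton matches the paper's: the degree identity $r=\deg_B(b)+|\{a':aa'\in E(A),\ f(a')=b\}|$ drives the colouring of $A$ and the degree sequence of $B$ in both directions, and Proposition \ref{prop:emptya} caps $k$ at $3$. The gap lies in the two places where you outsource the geometric content to \cite[Theorem 2.16]{kpzz19}. That theorem is invoked elsewhere in the paper only under the hypothesis that $f$ is locally injective (Theorem \ref{thm:siepol}, Proposition \ref{prop:AA}); for $B=K_2$ the paper switches to \cite[Theorem 2.13]{kpzz19} precisely because local injectivity fails. In most cases of the present proposition $f$ is \emph{not} locally injective (any case with some $n_i\geq 2$, e.g.\ $\ca(2,2)$ with $B=K_2$, or $k=1$ where every neighbour of $a$ has the same image), so neither application of Theorem 2.16 is justified as written. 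Worse, in the forward direction the conclusion you want from it is simply not a consequence of planarity of the product: take $A$ a path $a_1a_2a_3$ and $f$ sending $a_1,a_3$ to two vertices of $B$ on no common face; then $A\t B$ is three copies of $B$ joined by two edges, hence planar, yet the images of the neighbours of $a_2$ share no face. Any correct version of Theorem 2.16 must therefore carry extra hypotheses (on blocks or $2$-connectivity of $A$) that are not available here, since $\ca(2,1)$ and $\ca(2,1,1)$ contain graphs of connectivity $1$. The paper closes this by a different argument: if the coloured vertices of $B$ did not share a region, the boundary of the region of $aB$ visible from outside would be a separating cycle of $A\t B$, forcing $a$ to be a separating vertex of $A$ for \emph{every} $a$, which is impossible in a finite connected graph. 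You need this (or an equivalent) step; regularity of the product, not planarity alone, is what makes the common-face property hold.

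In the backward direction the situation is less serious: your ``compatible alignment'' remark is essentially the paper's actual proof, which draws each $aB$ with the coloured vertices on its outer face (reflecting as needed, which is harmless since $k\leq 3$), adds the connecting edges using the contiguity of colour blocks from Lemma \ref{le:cy}, and observes that contracting each $aB$ to a point yields a plane copy of $A$. If you carry out that construction directly you do not need Theorem 2.16 at all, and the proof becomes valid for non-locally-injective $f$. As written, however, both appeals to \cite[Theorem 2.16]{kpzz19} should be replaced by these direct arguments.
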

\begin{proof}
$\Rightarrow$. Let $A\t B$ be $r$-regular, connected, and planar. By definition of Sierpi\'{n}ski product, for every $a\in V(A)$ and $b\in V(B)$ we have
	\begin{align}
		\label{eq:sumt}
		r&\notag=\deg_{A\t B}(a,b)=
		\\&=\deg_B(b)+|\{a': aa'\in E(A) \text{ and } f(a')=b\}|.
	\end{align}
	In particular, each vertex of $B$ is of degree at most $r$, and it belongs to the image of $f$ if and only if it is of degree at most $r-1$ in $B$. We assign a different arbitrary colour to each $b\in V(B)$ of degree at most $r-1$. We then assign the same colour as $b$ to every $a\in V(A)$ such that $f(a)=b$. By \eqref{eq:sumt}, given $a\in V(A)$, for every $b\in V(B)$ satisfying
	\[\deg_B(b)=j\leq r-1,\]
	the vertex $a$ has $r-j$ neighbours of the colour of $b$. In particular, $A$ is a regular graph, and each $a\in V(A)$ has neighbours of the same combination of colours. We label the colours $c_1,c_2,\dots,c_k$ in cyclic order, so that for each $a\in V(A)$, there are $n_i$ neighbours coloured $c_i$, and $n_1\geq n_2\geq\dots\geq n_k$. That is to say, $A\in\ca(n_1,n_2,\dots,n_k)$. By Proposition \ref{prop:emptya}, we have $k=|\ii(f)|\leq 3$.
	
	As for $B$, we claim that the coloured vertices (i.e., those in the image of $f$) all lie on the same region. By contradiction, we take $a\in V(A)$ and sketch $aB$ in the plane so that the vertex $x_1$ of colour $c_1$ is on the boundary of the external region $\calR$, and say $x_2$ of colour $c_2$ is not. Let $a_1,a_2$ be neighbours of $a$ in $A$ of colours $c_1,c_2$ respectively. Then in $A\t B$ the subgraph $a_1B$ lies outside of the boundary of $\calR$ while $a_2B$ lies inside. Hence the boundary of $\calR$ in $aB$ is a separating cycle of $A\t B$. By definition of Sierpi\'{n}ski product, $a$ is a separating vertex of $A$. It follows that all vertices of $A$ are separating vertices, contradiction. We deduce that $B\in\cb_r(n_1,n_2,\dots,n_k)$.
	
	$\Leftarrow$. Vice versa, assume that $A\in\ca(n_1,n_2,\dots,n_k)$, $B\in\cb_r(n_1,n_2,\dots,n_k)$, and $f$ preserves colours, and let us show that $A\t B$ is a regular, connected, planar graph. Regularity follows readily from the assumptions on $A,B,f$. Since $A,B$ are connected, connectivity of $A\t B$ follows from Lemma \ref{le:deg3}. Next, $B$ has a planar embedding such that the coloured vertices (i.e., the elements of $\ii(f)$) all lie on the same region. We sketch the graphs
	\[\{aB : a\in V(A)\},\]
	in the plane so that in each, the external region is the one containing the coloured vertices.
	We then add the remaining edges of $A\t B$. We contact each $aB$ to a single vertex, obtaining a graph isomorphic to $A$. Since $A$ is planar, then so is $A\t B$.
\end{proof}

%\begin{cor}
%Let $A,B$ be graphs and $f$ a function such that $A\t B$ is $r$-regular, connected, and planar. Then the connectivity of $A\t B$ is at most $3$.
%\end{cor}
%\begin{proof}
%\end{proof}

\subsection{Classification and structural properties of $\ca(n_1,n_2,\dots,n_k)$}
\label{sec:AA}

In this section, we inspect the fine properties of the classes \eqref{eq:inf0}
\begin{align}
	\label{eq:inf}
	\notag&\ca(1,1,1)\subseteq\ca(2,1),
	\\\notag&\ca(2,1,1)\subseteq\ca(2,2),\ca(3,1),
	\\&\ca(4,1).
\end{align}

\begin{lemma}
Each of the classes in \eqref{eq:inf} contains infinitely many graphs.
\end{lemma}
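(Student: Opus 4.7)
The plan is to exhibit, for each class appearing in \eqref{eq:inf}, an explicit infinite family of members. Using the inclusions $\ca(1,1,1)\subseteq\ca(2,1)$ and $\ca(2,1,1)\subseteq\ca(2,2),\ca(3,1)$ already recorded earlier in this section, the task reduces to producing infinite families in the three classes $\ca(1,1,1)$, $\ca(2,1,1)$, and $\ca(4,1)$.

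For $\ca(1,1,1)$, I would take, for each $n\geq 1$, the prism $G_n=C_{3n}\q K_2$, with vertex set $\{u_{i,j}:i\in\mathbb{Z}/3n\mathbb{Z},\,j\in\{0,1\}\}$ and edges $u_{i,j}u_{i+1,j}$ and $u_{i,0}u_{i,1}$, coloured by $c(u_{i,j})=c_{(i\bmod 3)+1}$. The three neighbours $u_{i-1,j}$, $u_{i+1,j}$, $u_{i,1-j}$ of $u_{i,j}$ then carry three pairwise distinct colours, so $G_n\in\ca(1,1,1)$; since $|V(G_n)|=6n$, this yields infinitely many pairwise non-isomorphic members.

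For the classes $\ca(2,1,1)$ and $\ca(4,1)$, I would proceed in two stages. First, exhibit a base member $A_0$ in each class (in $\ca(2,1,1)$ the graph of Figure \ref{fig:A211} serves, and in $\ca(4,1)$ an analogous seed can be produced from a $5$-regular cylindrical strip with suitable caps). Second, define a local replacement operation which, applied to any member of the class, returns a strictly larger member of the same class: the operation takes a cycle $C$ in the planar embedding whose vertex colours match a prescribed boundary pattern and replaces the interior of $C$ by a larger planar gadget attached to $C$ with the same boundary colour data. Correctness amounts to checking, for the new graph, (i) $r$-regularity with $r=4$ or $5$, (ii) the prescribed colour counts $n_1,n_2,\ldots$ around every vertex, and (iii) the cyclic ordering of colours around every vertex guaranteed by Lemma \ref{le:cy}. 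Iterating the operation produces infinitely many pairwise non-isomorphic members.

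The hard part will be designing a valid gadget for $\ca(2,1,1)$ and especially for $\ca(4,1)$. A direct count, using that the blue and green subgraphs are each $1$-regular, shows that any $A\in\ca(2,1,1)$ satisfies $|V(A)|\equiv 0\pmod 8$ and any $A\in\ca(4,1)$ satisfies $|V(A)|\equiv 0\pmod{10}$. Moreover, the cyclic-order condition of Lemma \ref{le:cy} forces the two (respectively four) red neighbours of each vertex to appear consecutively around the vertex, which eliminates seemingly natural candidates such as antiprisms in the $4$-regular case and the icosahedron in the $5$-regular case. The technical core of the proof is thus the construction of a planar gadget that simultaneously honours the colour-count identities, the cyclic-order constraint, and planarity along its boundary, while preserving regularity.
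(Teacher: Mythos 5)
Your reduction via the inclusions in \eqref{eq:inf} to the three classes $\ca(1,1,1)$, $\ca(2,1,1)$, $\ca(4,1)$ is exactly the paper's first step, and your explicit family $C_{3n}\q K_2$ with the period-$3$ colouring is a complete and correct treatment of $\ca(1,1,1)$ (it generalises the paper's base example, the triangular prism). The problem is everything after that: for $\ca(2,1,1)$ and $\ca(4,1)$ you do not actually prove anything. You describe a two-stage plan (base member plus a ``local replacement'' gadget), but you never construct the gadget, and you say yourself that designing it is ``the hard part'' and ``the technical core.'' For $\ca(4,1)$ you do not even exhibit a base member -- ``a $5$-regular cylindrical strip with suitable caps'' is not a construction, and your own parity observations ($|V(A)|\equiv 0\pmod 8$ for $\ca(2,1,1)$, $|V(A)|\equiv 0\pmod{10}$ for $\ca(4,1)$, and the consecutivity forced by Lemma \ref{le:cy}) show precisely why a casual candidate is likely to fail. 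As it stands, two of the three required infinite families are asserted, not proved.

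The paper closes this gap with a mechanism that is much lighter than interior-replacement and that you could adopt directly: take \emph{two disjoint copies} of a known member $A$ of the class (Figures \ref{fig:A211} and \ref{fig:A41} supply the seeds), embed each so that an edge $a_1a_2$ (resp.\ $a_3a_4$) joining two vertices of colour $c_1$ lies on the external region, and form $A-a_1a_2-a_3a_4+a_1a_3+a_2a_4$. Each of the four affected vertices trades a $c_1$-neighbour for a $c_1$-neighbour, so degrees and all colour counts are preserved; the two new edges can be drawn in the common external region, so planarity is preserved; and the two copies become connected. Iterating doubles the vertex count, giving infinitely many members. Note also that your checklist item (iii) is redundant: Lemma \ref{le:cy} shows the grouped cyclic order is a \emph{consequence} of membership in $\ca(n_1,\dots,n_k)$, not an extra condition to verify.
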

\begin{proof}
By the inclusions \eqref{eq:inf}, it suffices to show that $\ca(1,1,1)$, $\ca(2,1,1)$, and $\ca(4,1)$ are infinite. Elements of these classes may be found in Figures \ref{fig:A111}, \ref{fig:A211}, and \ref{fig:A41} respectively. To build new graphs, one may proceed as follows. One sketches two copies of a graph $A$ in the desired class $\ca$, one with an edge $a_1a_2$ on the external region, and the other with an edge $a_3a_4$ on the external region, such that $a_1,a_2,a_3,a_4$ are of colour $c_1$. Then the graph
\[A-a_1a_2-a_3a_4+a_1a_3+a_2a_4\]
is a new element of $\ca$. Note that it is connected and planar by construction.
\begin{figure}[ht]
\centering
\includegraphics[width=6.25cm]{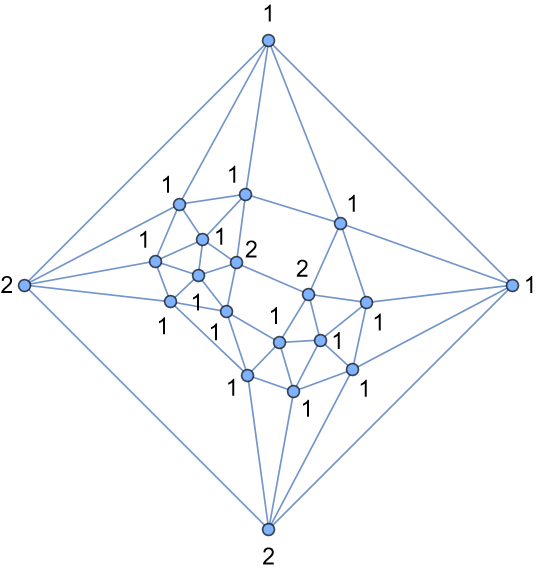}
\caption{An element of $\ca(4,1)$.}% The subgraph formed by the vertices of colour $c_1$ is the octagonal antiprism.
\label{fig:A41}
\end{figure}
\end{proof}

The following remark records some useful properties of subgraphs of $A\in\ca(n_1,n_2,\dots,n_k)$ where we consider a subset of the colours of the vertices.
\begin{rem}
	\label{rem:A}
Let $A\in\ca(n_1,1)$. Since each vertex has $n_1$ neighbours of colour $c_1$, then the subgraph of $A$ generated by the vertices of colour $c_1$ is $n_1$-regular, connected, and planar. For $n_1=2$, it is a disjoint union of cycles.
\\
Similarly, for $A\in\ca(1,1,1)$, fixing two colours $c_i\neq c_j$, $i,j\in\{1,2,3\}$, the subgraph of $A$ generated by the vertices of colours $c_i,c_j$ is a disjoint union of cycles each of length a multiple of $4$, where the order of colours around each cycle is
\[c_i,c_i,c_j,c_j,c_i,c_i,c_j,c_j,\dots,c_i,c_i,c_j,c_j.\]
\\
Let $A\in\ca(2,1,1)$. The subgraph of $A$ generated by the vertices of colour $c_1$ is a disjoint union of cycles, and the subgraph generated by the vertices of colours $c_2,c_3$ is a disjoint union of cycles each of length a multiple of $4$, where the order of colours around each cycle is
\[c_2,c_2,c_3,c_3,c_2,c_2,c_3,c_3,\dots,c_2,c_2,c_3,c_3.\]
\\
Let $A\in\ca(2,2)$. For $i=1,2$, the subgraph of $A$ generated by the vertices of colour $c_i$ is a disjoint union of cycles.
\end{rem}

For instance, in Figure \ref{fig:A41} the subgraph generated by the vertices of colour $c_1$ is the octagonal antiprism, a $4$-regular, connected, planar graph.

\begin{lemma}
	\label{le:A111}
	Let $A\in\ca(1,1,1)$. Then $A$ is $2$-connected.
\end{lemma}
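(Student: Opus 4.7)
The plan is to argue by contradiction, exploiting the fact that $A$ is cubic together with Remark~\ref{rem:A}. Suppose some $a\in V(A)$ is a separating vertex of $A$. Since $\deg_A(a)=3$ and $A-a$ has at least two connected components, each adjacent to $a$ via at least one edge (as $A$ is connected), by pigeonhole at least one component $C$ of $A-a$ is joined to $a$ by exactly one edge $an$. One checks that $an$ is a bridge: any $a$-to-$n$ path in $A-an$ would need to enter $C$ through some vertex other than $n$, hence through a neighbour of $a$ lying outside $C$, which is impossible.

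For the second step, I would show that every edge of $A$ lies on a cycle, so $A$ contains no bridge, producing a contradiction. This is immediate from Remark~\ref{rem:A}: for every pair of distinct colours $c_i,c_j\in\{c_1,c_2,c_3\}$, the subgraph of $A$ induced by vertices of colours $c_i,c_j$ is $2$-regular, and is therefore a disjoint union of cycles. Now the endpoints $a,n$ of the edge $an$ carry at most two of the three available colours, so we may fix a pair $\{c_i,c_j\}$ containing both colours appearing on $a$ and $n$. The edge $an$ then lies in the corresponding bichromatic subgraph, hence on some cycle of $A$; deleting $an$ from this cycle produces an $a$-to-$n$ path in $A-an$, contradicting the fact that $an$ is a bridge.

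I do not anticipate any serious obstacle in this plan. The two ingredients — a cubic graph with a cut vertex must contain a bridge, and every edge of a member of $\ca(1,1,1)$ lies on a cycle via the bichromatic subgraphs of Remark~\ref{rem:A} — are both short, and the contradiction is immediate. The only minor point requiring care is picking the correct pair $\{c_i,c_j\}$ for the edge $an$, which is always possible since at most two distinct colours occur on its endpoints.
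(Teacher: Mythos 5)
Your proposal is correct and follows essentially the same route as the paper: locate a bridge at the cut vertex using the fact that $A$ is cubic, then contradict this by placing that edge on a cycle of the bichromatic subgraph from Remark~\ref{rem:A}. Your justification that a bridge exists and your care in choosing the colour pair $\{c_i,c_j\}$ are slightly more explicit than the paper's, but the argument is the same.
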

\begin{proof}
By contradiction, there exists a separating vertex $v$ in $A$. Since $\deg(v)=3$, then at least one edge incident to $v$, say $vw$, is a bridge in $A$. Let $c_i$ be the colour of $v$ and $c_j$ the colour of $w$. As seen in Remark \ref{rem:A}, the edge $vw$ lies on a cycle where the order of colours is
\[c_i,c_i,c_j,c_j,c_i,c_i,c_j,c_j,\dots,c_i,c_i,c_j,c_j.\]
Then $vw$ is a bridge that lies on a cycle, contradiction.
\end{proof}
Figure \ref{fig:A111} illustrates an element of $\ca(1,1,1)$ of connectivity $3$, and Figure \ref{fig:A111conn2} illustrates an element of $\ca(1,1,1)$ of connectivity $2$. On the other hand, there exist elements of $\ca(2,1)$ of connectivity $1$, as in Figure \ref{fig:A21conn1}.
	\begin{figure}[ht]
		\centering
		\begin{subfigure}{0.47\textwidth}
			\centering
			\includegraphics[width=3.75cm]{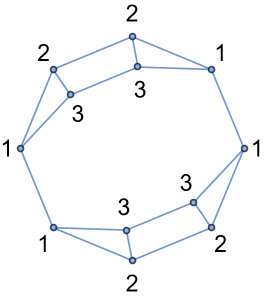}
			\caption{A member of $\ca(1,1,1)$ of connectivity $2$.}
			\label{fig:A111conn2}
		\end{subfigure}
		\hfill
		\begin{subfigure}{0.47\textwidth}
			\centering
			\includegraphics[width=4.cm]{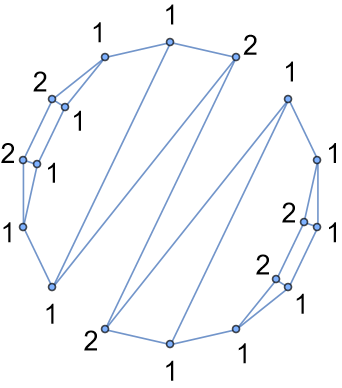}
			\caption{A member of $\ca(2,1)$ of connectivity $1$.}
			\label{fig:A21conn1}
		\end{subfigure}
		\caption{Labels represent colours of vertices.}
		\label{fig:A02}
	\end{figure}

\begin{lemma}
	\label{le:111}
There exist infinitely many $3$-connected graphs in $\ca(1,1,1)$.
\end{lemma}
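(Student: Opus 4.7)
The plan is to exhibit an explicit infinite family of $3$-connected members of $\ca(1,1,1)$, namely the prisms $C_{3k}\q K_2$ for $k\geq 1$. Each such prism is a cubic, planar polyhedron, hence $3$-connected, and the family contains infinitely many pairwise non-isomorphic graphs since the order is $6k$. Thus the lemma reduces to exhibiting a valid $3$-colouring of each $C_{3k}\q K_2$ in the sense of Definition \ref{def:A}.

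Label the vertices of $C_{3k}\q K_2$ as $u_1,u_2,\dots,u_{3k}$ on the outer cycle and $v_1,v_2,\dots,v_{3k}$ on the inner cycle (indices modulo $3k$), with rungs $u_iv_i$. Assign both $u_i$ and $v_i$ the colour $c_j$, where $j\in\{1,2,3\}$ is determined by $j\equiv i\pmod 3$. The colours around each of the two cycles then cycle periodically through $c_1,c_2,c_3$, and the pattern closes up consistently precisely because $3$ divides the cycle length $3k$.

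With this colouring, the three neighbours of $u_i$ are $u_{i-1}$, $u_{i+1}$, and $v_i$, whose colours are, modulo $3$, $c_{j-1}$, $c_{j+1}$, and $c_j$ respectively; hence exactly one neighbour of each of the three colours. The identical argument applies to $v_i$. Therefore $C_{3k}\q K_2\in\ca(1,1,1)$ for every $k\geq 1$, proving the lemma.

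No real obstacle arises: the entire proof amounts to checking colour counts at one generic vertex on each of the two cycles, which is immediate from the periodic pattern $c_1,c_2,c_3,c_1,c_2,c_3,\dots$. The only point requiring care is the divisibility condition $3\mid 3k$, which is exactly what guarantees that the colouring along each cycle is well-defined at the join; this is why the family is indexed by multiples of $3$ rather than by arbitrary cycle lengths.
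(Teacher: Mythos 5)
Your proof is correct, and it takes a genuinely different route from the paper. You exhibit an explicit closed-form infinite family, the prisms $C_{3k}\q K_2$, and verify directly that the period-$3$ colouring (with $u_i,v_i$ both receiving $c_{i\bmod 3}$) gives every vertex exactly one neighbour of each colour; since Definition \ref{def:A} for $\ca(1,1,1)$ only asks for these counts (and the cyclic-order condition is vacuous when $n_1=n_2=n_3=1$), and prisms are polyhedra of pairwise distinct orders $6k$, the lemma follows. The paper instead starts from the single base example (the triangular prism, which is your $k=1$ case) and grows new examples by a local graph transformation that replaces a small coloured subgraph by a larger one while preserving planarity and $3$-connectivity. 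Your argument is more self-contained and elementary, needing only one verification at a generic vertex; the paper's transformation approach is less explicit but more flexible, since it can be applied repeatedly to \emph{any} $3$-connected member of $\ca(1,1,1)$ and so produces a much richer supply of examples than a single one-parameter family. For the statement as given, both are fully adequate.
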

\begin{proof}
One starts with the triangular prism, depicted in Figure \ref{fig:A111}. One obtains new graphs of the desired type by starting from a $3$-connected member of $\ca(1,1,1)$, and then replacing a subgraph as in Figure \ref{fig:A111ta} with the graph in Figure \ref{fig:A111tb}. Note that this transformation preserves planarity and $3$-connectivity.
	\begin{figure}[ht]
		\centering
		\begin{subfigure}{0.47\textwidth}
			\centering
			\includegraphics[width=2.75cm]{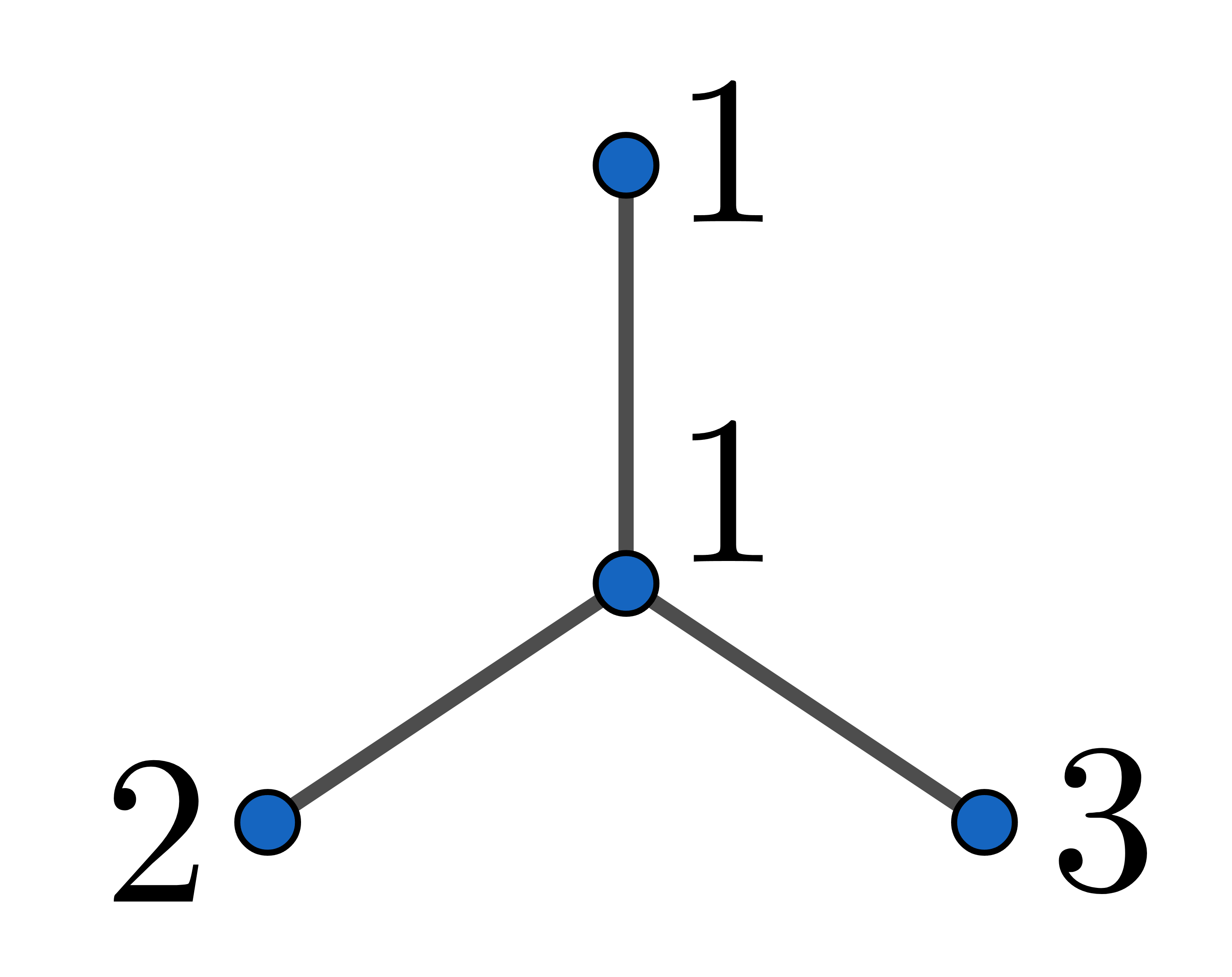}
			\caption{}
			\label{fig:A111ta}
		\end{subfigure}
		\hspace{-0.75cm}
		%\hfill
		\begin{subfigure}{0.47\textwidth}
			\centering
			\includegraphics[width=4.cm]{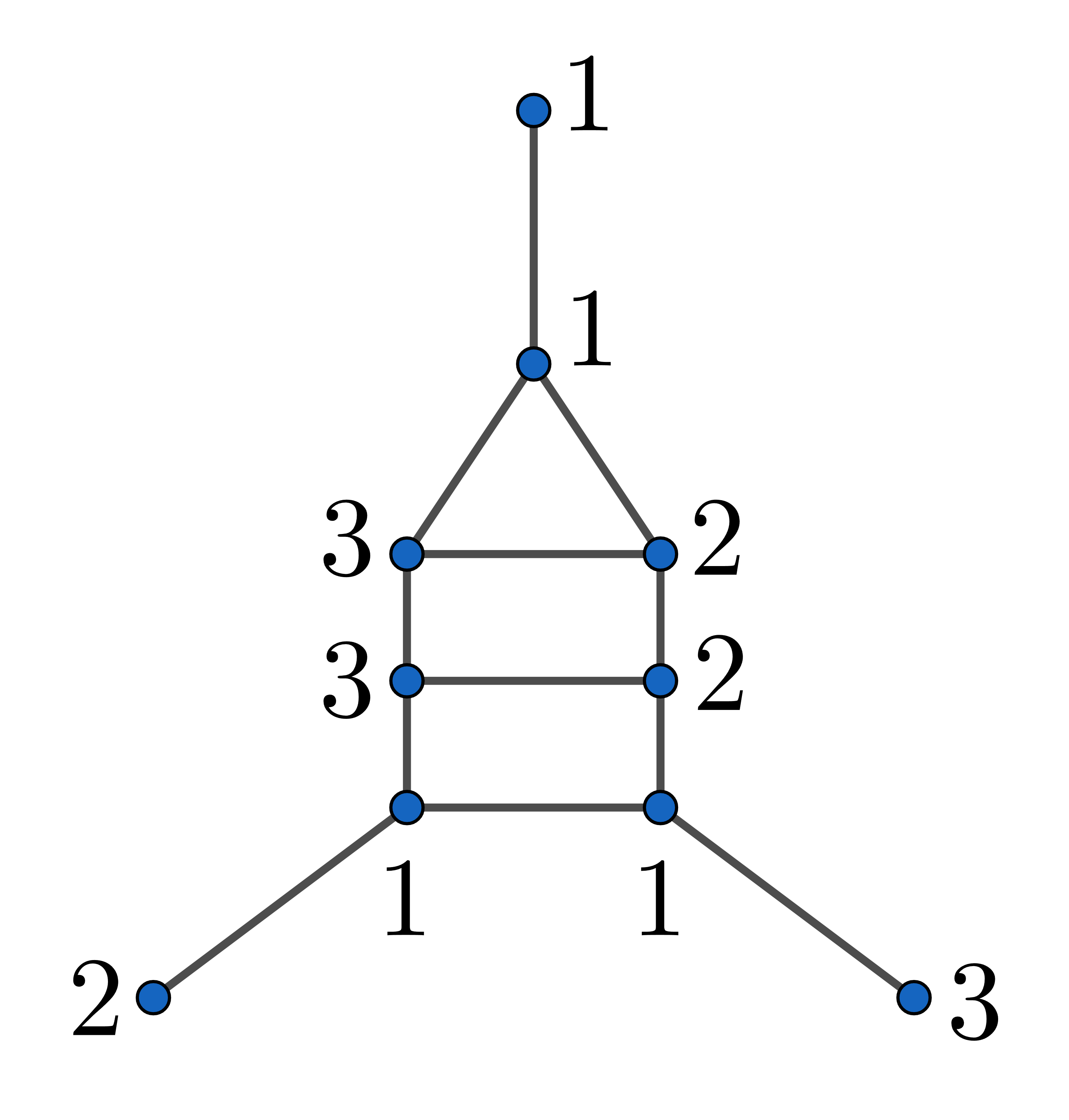}
			\caption{}
			\label{fig:A111tb}
		\end{subfigure}
		\caption{Transformation to obtain $3$-connected elements of $\ca(1,1,1)$.}
		\label{fig:A111t}
	\end{figure}
	
\end{proof}

There exist elements of $\ca(2,1,1)$ of connectivity $1$ (e.g.\ we have found an example on $80$ vertices, data available on request). Since $\ca(2,2)$ and $\ca(3,1)$ both contain $\ca(2,1,1)$, these classes of graphs also contain members of connectivity $1$. We record that since these are $4$-regular graphs, they are bridgeless, thus every edge lies on a cycle.

Thanks to Lemma \ref{le:111} and to the following result, one may construct infinitely many solutions $A$ for each of the six scenarios in Theorem \ref{thm:class}. Infinitely many solutions $B$ for each scenario are constructed in Appendix \ref{sec:appb}.
\begin{lemma}
	There exist infinitely many $3$-connected graphs in $\ca(2,2)$, and infinitely many $2$-connected graphs in $\ca_\#(2,1,1)$.
\end{lemma}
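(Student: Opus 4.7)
The plan for the first claim is to use the $n$-antiprisms $A_n$ for $n \geq 3$, a classical family of $4$-regular, $3$-connected, planar graphs. Colour the outer $n$-cycle with $c_1$ (red) and the inner $n$-cycle with $c_2$ (blue). In the standard planar embedding, the cyclic order of neighbours around an outer vertex $o_k$ is $o_{k-1}, i_k, i_{k+1}, o_{k+1}$, yielding the colour sequence $c_1, c_2, c_2, c_1$, which is a cyclic rotation of $c_1, c_1, c_2, c_2$; the inner vertices are handled symmetrically. Together with the well-known $3$-connectivity of antiprisms, this shows $\{A_n\}_{n \geq 3}$ is an infinite family of $3$-connected elements of $\ca(2,2)$.

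For the second claim, I would first reduce it to exhibiting infinitely many $3$-connected members of $\ca(2,1,1)$. Indeed, if $A \in \ca(2,1,1)$ has no $2$-cut, the defining condition of Definition~\ref{def:hash} is vacuous, so every such $A$ already lies in $\ca_\#(2,1,1)$. My plan is then to take the $3$-connected example of Figure~\ref{fig:A211} as a base and iterate a local substitution in the spirit of Lemma~\ref{le:111}: identify a small subgraph (for instance a red-red edge together with a prescribed colour pattern on its two incident faces) and replace it with a larger planar patch that carries a compatible $\ca(2,1,1)$-colouring on its boundary. If the substitution preserves planarity, $4$-regularity, the cyclic colour order $c_1, c_1, c_2, c_3$ at every vertex (old and new), and $3$-connectivity, then successive applications strictly enlarge the graph and produce an infinite family.

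The main obstacle is exhibiting a concrete gadget pair that simultaneously preserves the cyclic colour order and $3$-connectivity. The colour condition is restrictive: Lemma~\ref{le:cy} forces the two red neighbours of every new vertex to lie in a single angular sector of its link, which limits the admissible shapes of the patch. I would handle this by first designing the boundary of the patch so that the two red edges incident to each boundary vertex are consecutive in its rotation, and then verifying $3$-connectivity by a Menger-style argument using the fact that the unmodified part of the graph remains $3$-connected and the gadget supplies three internally disjoint paths between any two of its boundary vertices.
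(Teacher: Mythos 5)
Your treatment of the first claim is correct and in fact takes a different route from the paper: you exhibit the antiprisms directly as $3$-connected members of $\ca(2,2)$ (outer cycle red, inner cycle blue, so every vertex has exactly two neighbours of each colour), whereas the paper deduces the $\ca(2,2)$ family from the $\ca(2,1,1)$ family by recolouring all green vertices blue. Your route is self-contained and arguably cleaner for that half, since it avoids any dependence on the harder $\ca(2,1,1)$ construction; the paper's route has the advantage that a single construction settles both claims at once. Your reduction of the second claim is also the same as the paper's: for a $3$-connected $A\in\ca(2,1,1)$ the condition of Definition~\ref{def:hash} is vacuous, so it suffices to produce infinitely many $3$-connected members of $\ca(2,1,1)$.

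The genuine gap is in the second claim: you never exhibit the gadget, and you say so yourself (``the main obstacle is exhibiting a concrete gadget pair''). A local patch substitution in the spirit of Lemma~\ref{le:111} is exactly the delicate part here, because Lemma~\ref{le:cy} forces the two red neighbours of every vertex (including every vertex of the patch and every vertex on its boundary) to be consecutive in the rotation, and a plan that defers the design of the patch has not proved existence of even one new graph beyond Figure~\ref{fig:A211}. As written, the argument for the $\ca_\#(2,1,1)$ half is a programme, not a proof.

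For comparison, the paper sidesteps the local-gadget problem entirely: it takes \emph{two disjoint copies} of the graph of Figure~\ref{fig:A211}, with $u_i,v_i$ the vertices of colour $c_i$ on the external region of one copy and $w_i,x_i$ those of the other ($i=2,3$), and performs
\[-u_2v_2-w_2x_2+u_2w_2+v_2x_2-u_3v_3-w_3x_3+u_3w_3+v_3x_3,\]
i.e.\ it deletes one blue--blue and one green--green edge on the outer face of each copy and reconnects crosswise. Each vertex keeps its degree and the colours of its neighbours (a blue neighbour is traded for a blue neighbour, a green for a green), planarity is preserved because all eight vertices lie on the external regions, and the four new edges make the union $3$-connected; iterating doubles the size each time. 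If you want to salvage your plan, the easiest fix is to adopt this global two-copy merge in place of the unspecified local patch.
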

\begin{proof}
It suffices to construct $3$-connected members of $\ca(2,1,1)$, since then exchanging all green vertices with blue ones yields $3$-connected graphs in $\ca(2,2)$. We start with two copies of Figure \ref{fig:A211}, where $u_i,v_i$ are the vertices of colour $c_i$ on the external region of one copy, and $w_i,x_i$ the vertices of colour $c_i$ on the external region of the other copy, $i=2,3$. We then perform the operation
\[-u_2v_2-w_2x_2+u_2w_2+v_2x_2-u_3v_3-w_3x_3+u_3w_3+v_3x_3\]
to obtain another $3$-connected member of $\ca(2,1,1)$.
\end{proof}

\begin{comment}
\begin{lemma}
\label{le:A211conn}
Let $A\in\ca(2,1,1)$, $B\in\cb(2,1,1)$, and $f:V(A)\to V(B)$ be a function that preserves colours. Suppose that $A$ has a minimal $2$-cut
\[\{a_1,a_2\},\]
while $A\t B$ is $3$-connected. Then $A-a_1-a_2$ has exactly two connected components, each containing two neighbours of $a_i$ for $i=1,2$, exactly one of them being a red vertex.
\end{lemma}
\end{comment}

We now turn to the graphs $\ca(2,2)$, that appear in scenario 1 of Theorem \ref{thm:class}. Given a plane graph $G$, the vertex $v$ expansion operation, described in \cite{tutt61}, is defined by
\[G-v+xy+xu_1+\dots+xu_{m}+yu_{m+1}+\dots+yu_{n},\]
where
\[u_1,\dots,u_n\]
are the neighbours of $u$ in cyclic order. The fact that this operation preserves $3$-connectivity is key in \cite{tutt61}.
\begin{lemma}
	\label{le:A22conn}
	Let $A\in\ca(2,2)$, $B=K_2\in\cb_3(2,2)$, and $f:V(A)\to V(B)$ be the function that preserves colours. Then $A\t B$ is obtained from $A$ by expanding all the vertices. In particular, for $k=2,3$, $A\t B$ is $k$-connected if and only if $A$ is $k$-connected.
\end{lemma}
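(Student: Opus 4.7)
The plan is to first establish the structural claim that $A\t B$ is the simultaneous Tutte vertex expansion of $A$ — at each vertex the cyclic split separates red neighbours from blue neighbours — and then to exploit this description together with the red/blue-subgraph structure of $\ca(2,2)$ to derive the $k$-connectivity equivalence for $k=2,3$.

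For the structural claim I would unfold the definitions. For each $a\in V(A)$ the vertices $(a,x),(a,y)$ are joined by the internal edge coming from $xy\in E(K_2)$, and a typical external edge $aa'\in E(A)$ becomes $(a,f(a'))(a',f(a))$. Since $f$ preserves colours and $B=K_2$ has red vertex $x$ and blue vertex $y$, the second coordinate $f(a')$ is $x$ when $a'$ is red and $y$ when $a'$ is blue; hence the two red neighbours of $a$ contribute external edges incident to $(a,x)$ and the two blue neighbours contribute external edges incident to $(a,y)$. Lemma \ref{le:cy} ensures that the two red neighbours of $a$ are consecutive in the planar cyclic order around $a$ (and likewise the two blue ones), so the resulting split at $a$ is a valid Tutte vertex expansion. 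Performing the split at every vertex simultaneously gives $A\t B$.

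For the forward direction of the connectivity equivalence, the case $k=2$ is immediate from Theorem \ref{thm:kc}: every $a\in V(A)$ has both a red and a blue neighbour, so $|\{f(a'):aa'\in E(A)\}|=2$. The case $k=3$ follows from Tutte's splitting theorem applied one vertex at a time: at each step the split vertex $v$ is replaced by two new vertices of degree $3$, each receiving two non-internal neighbours, so the operation preserves 3-connectivity.

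For the converse I use Remark \ref{rem:A}: the subgraph of $A$ induced by the red vertices is a disjoint union of cycles, as is the one induced by the blue vertices. Suppose $A$ has a cut vertex $a$, say coloured red. Its two red neighbours $u_1,u_2$ lie on a common red cycle through $a$; removing $a$ from that cycle leaves a path joining $u_1$ and $u_2$, so $u_1,u_2$ lie in the same component of $A-a$, and the analogous blue-cycle argument puts the two blue neighbours of $a$ in the same component. Since every component of $A-a$ must contain at least one neighbour of $a$, there are exactly two components, one containing both red neighbours and the other containing both blue neighbours. One then checks that $(a,x)$ is a separating vertex of $A\t B$: after its removal $(a,y)$ is attached only to the ``blue'' component via its two external edges, so the ``red'' component is cut off from the rest. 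This contradicts $2$-connectivity of $A\t B$. An analogous but more intricate red/blue cycle analysis handles the $k=3$ case by producing, from a $2$-cut $\{a_1,a_2\}$ of $A$, a $2$-cut of $A\t B$ of the form $\{(a_1,?),(a_2,?)\}$ with appropriately chosen second coordinates. I expect the main obstacle to be precisely this last step: one must carry out a case analysis according to whether $a_1,a_2$ share red or blue cycles and select the second coordinates so that the resulting pair genuinely separates $A\t B$.
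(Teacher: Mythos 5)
Your structural identification of $A\t K_2$ with the total vertex expansion of $A$ matches the paper's, and your forward direction for $k=3$ via Tutte's splitting theorem is a legitimate (arguably cleaner) alternative to the paper's argument. The genuine gap is the converse for $k=3$: you never carry it out, only describing the case analysis that ``one must'' perform and flagging it as the main obstacle. This step is not routine. Given a $2$-cut $\{a_1,a_2\}$ of $A$ with components $J_1,J_2$ of $A-a_1-a_2$, the pair $\{(a_1,z_1),(a_2,z_2)\}$ fails to separate $A\t K_2$ whenever a surviving vertex $(a_i,z_i')$ has one external neighbour in $J_1$ and one in $J_2$; so one must show that the red neighbours of $a_i$ and the blue neighbours of $a_i$ cannot both be split between $J_1$ and $J_2$ for a suitable $i$, which requires the cycle structure of Remark \ref{rem:A} and Lemma \ref{le:A22} together with planarity (Lemma \ref{le:cy} forces the $J_1$-neighbours of $a_i$ to be consecutive in the rotation at $a_i$). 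The paper compresses all of this into the assertion that, for the expansion, $\{a_1,\dots,a_k\}$ is a $k$-cut of $A$ iff $\{(a_1,z_1),\dots,(a_k,z_k)\}$ is a $k$-cut of $A\t B$ for every (equivalently, some) choice of $z_i$; whichever route you take, this is precisely the content that must be written down, and your proposal omits it.

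Two smaller defects. First, for the forward direction at $k=2$ you invoke Theorem \ref{thm:kc}, but that theorem requires both factors to be $2$-connected, and $K_2$ is not $2$-connected under the paper's definition (it has only two vertices); you should instead verify directly that none of conditions \ref{eq:sepi}, \ref{eq:sepii}, \ref{eq:sepiii} of Theorem \ref{thm:sep} can hold. Second, in the $k=2$ converse the ``analogous blue-cycle argument'' for the two blue neighbours of a red cut vertex $a$ does not exist: $a$ lies on no blue cycle, so its blue neighbours need not share a blue cycle. They do lie on the common red--blue alternating cycle through $a$ (the $2$-regular bichromatic subgraph), or one can argue by parity that each component of $A-a$ receives an even number of edges from $a$; either repair yields the two-component structure your argument needs.
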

\begin{proof}
	Let us construct $A\t B$ using the definition of Sierpi\'{n}ski product, starting from $A$. We replace each vertex $a$ with a copy of $K_2$ labelled $aK_2$, with one red and one blue vertex. Call $a',a''$ the red neighbours of $a$ in $A$, and $a''',a''''$ the blue neighbours. We connect the red vertex of $aK_2$ to the vertex of the same colour as $a$ in $a'K_2$ and $a''K_2$, and we connect the blue vertex of $aK_2$ to the vertex of the same colour as $a$ in $a'''K_2$ and $a''''K_2$. We note that this construction is equivalent to expanding all vertices of $A$.
	
	Let $k\in\{2,3\}$, and consider $a_1,a_2,\dots,a_k\in V(A)$. Since $A\t B$ is obtained from $A$ by expanding all vertices, then $\{a_1,a_2,\dots,a_k\}$ is a $k$-cut of $A$ if and only if
	\[\{(a_1,z_1),(a_2,z_2),\dots,(a_k,z_k)\}\]
	is a $k$-cut of $A\t B$ for every choice of $z_i\in V(a_iK_2)$, $1\leq i\leq k$ if and only if
	\[\{(a_1,z_1),(a_2,z_2),\dots,(a_k,z_k)\}\]
	is a $k$-cut of $A\t B$ for at least one choice of $z_i\in V(a_iK_2)$, $1\leq i\leq k$. Thus $A$ is $k$-connected if and only if $A\t B$ is $k$-connected, as claimed.
\end{proof}
The construction of Lemma \ref{le:A22conn} is illustrated in Figure \ref{fig:exp}.
	\begin{figure}[ht]
		\centering
		\begin{subfigure}{0.4\textwidth}
			\centering
			\includegraphics[width=3.5cm]{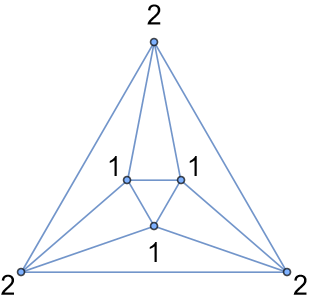}
			\caption{The octahedron $O\in\ca(2,2)$.}
			\label{fig:A22expand}
		\end{subfigure}
		\hfill
		\begin{subfigure}{0.58\textwidth}
			\centering
			\includegraphics[width=4.25cm]{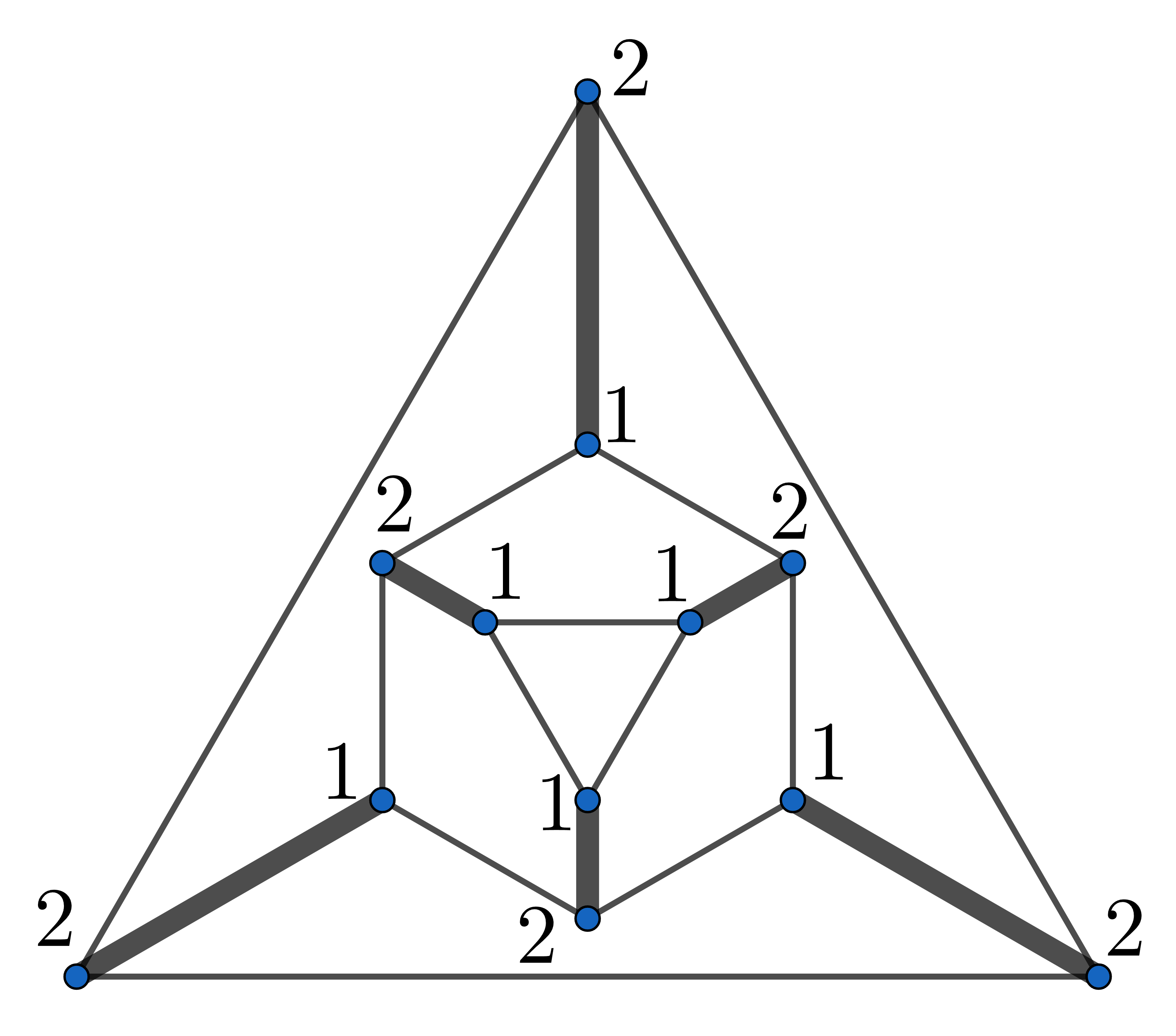}
			\caption{Expansion of $O$. Thick edges are the graphs $aK_2$, $a\in V(O)$.}
			\label{fig:A22expanded}
		\end{subfigure}
		\caption{Illustration of Lemma \ref{le:A22conn}. Labels represent colours of vertices.}
		\label{fig:exp}
	\end{figure}

\begin{lemma}
	\label{le:A22}
We have $A\in\ca(2,2)$ if and only if $A$ admits the following construction. One starts with a union of disjoint cycles, each of length at least $6$, with black edges and alternating red and blue vertices. Then one adds red cycles including only the red vertices, and blue cycles including only the blue ones, in such a way that the resulting graph is connected and planar, and moreover each red (resp. blue) vertex $v$ is incident to exactly two red (resp. blue) edges, and these two edges are on the same side (either both internal or both external) of the black cycle containing $v$.
\end{lemma}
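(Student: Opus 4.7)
The plan is to translate the local rotation constraint supplied by Lemma \ref{le:cy} into the stated global decomposition, and then to verify that any graph built by the recipe satisfies Definition \ref{def:A}.

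For the forward direction, given $A\in\ca(2,2)$, I would partition the edges of $A$ into three classes according to the colors of their endpoints: \emph{red} edges (both endpoints red), \emph{blue} edges (both endpoints blue), and \emph{black} edges (one red, one blue). Since every red vertex has exactly two red and two blue neighbors (and dually for blue vertices), the red and blue subgraphs are $2$-regular and hence, by Remark \ref{rem:A}, disjoint unions of cycles; and the black subgraph is $2$-regular bipartite, hence a disjoint union of even cycles whose vertices strictly alternate between red and blue. By Lemma \ref{le:cy}, at every vertex $v$ the two same-color neighbors are consecutive in the cyclic rotation, so the two black edges at $v$ are likewise consecutive and both bound a single wedge; consequently the two same-color edges at $v$ occupy the opposite wedge, which is exactly one of the two sides of the black cycle through $v$ at that vertex. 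This yields the "same side" condition.

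It remains to establish the length bound of $6$ on each black cycle, and this is the step I expect to be the main obstacle. I would proceed by contradiction, assuming a black cycle $v_1 b_1 v_2 b_2$ of length $4$ exists (length $2$ is impossible in a simple graph). By Lemma \ref{le:cy} the two black edges at each of the four vertices are consecutive in the rotation, so tracing the face on the "black-wedge" side at any of them produces exactly this $4$-cycle; thus the $4$-cycle bounds an empty face of $A$ and the remaining vertices all lie on the other side. I would then exploit the red cycle through $v_1,v_2$ and the blue cycle through $b_1,b_2$: each must enter the outer region only through the corresponding "monochromatic wedge", producing four disjoint monochromatic arcs emanating from the four corners of a single inner face. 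A careful planarity analysis of how these arcs can close up, combined with the $4$-regularity and simplicity of $A$, should force either a crossing or a repeated edge, contradicting the embedding.

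For the reverse direction, I would verify Definition \ref{def:A} directly. Each vertex $v$ lies on exactly one black cycle (contributing two opposite-color edges at $v$) and on exactly one cycle of its own color (contributing two same-color edges), so $\deg_A(v)=4$ with precisely two neighbors of each color. Planarity and connectedness are built into the construction. The "same side" condition means the two same-color edges at $v$ are consecutive in the rotation and so are the two black edges, producing the cyclic order $c_1,c_1,c_2,c_2$ required by $\ca(2,2)$. Aside from the length-at-least-$6$ argument, the proof is a routine translation of Lemma \ref{le:cy} and Remark \ref{rem:A} into global combinatorial data.
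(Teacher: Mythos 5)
Your argument follows the paper's proof of Lemma \ref{le:A22} essentially verbatim: partition $E(A)$ by the colours of the endpoints, observe that the red and blue subgraphs are $2$-regular and hence disjoint unions of cycles (Remark \ref{rem:A}) while the black subgraph is a disjoint union of even cycles with strictly alternating colours, and then use Lemma \ref{le:cy} to conclude that the two same-coloured edges at each vertex lie on a single side of the black cycle through it; the converse is, in both your write-up and the paper's, a direct verification of Definition \ref{def:A}. So on everything that the paper actually proves, your proposal is correct and takes the same route.

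The step you single out as the main obstacle --- showing that no black cycle has length $4$ --- is indeed the only non-routine content of the statement, and you should know that the paper's proof does not address it at all: the bound ``length at least $6$'' appears in the statement of Lemma \ref{le:A22} but is nowhere justified in its proof. Your instinct that the exclusion must come from planarity is right: a black $4$-cycle is perfectly compatible with $4$-regularity and the two-plus-two colour condition (take $K_{4,4}$ with parts $\{v_1,v_2,c_1,c_2\}$ and $\{r_1,r_2,b_1,b_2\}$, colouring $v_1,v_2,r_1,r_2$ red; then $v_1b_1v_2b_2$ is a black $4$-cycle), so any proof must be genuinely geometric. However, your sketch contains an unjustified step: the claim that a black $4$-cycle $v_1b_1v_2b_2$ bounds an empty face presupposes that the ``black wedges'' at all four of its vertices lie on the same side of the cycle. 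Lemma \ref{le:cy} only tells you that at each vertex \emph{separately} the two non-cycle edges go to one side; a priori $v_1$ could send its red edges inside while $b_1$ sends its blue edges outside, in which case neither side of the $4$-cycle is a face and your subsequent arc analysis does not start. A complete argument has to treat this mixed configuration as well as the face configuration (in the latter, one can derive a contradiction by tracking how the red cycle(s) through $v_1,v_2$ and the blue cycle(s) through $b_1,b_2$, which are pairwise non-crossing closed curves touching the $4$-cycle only at the prescribed vertices, separate $b_1$ from $b_2$ or force crossing black edges). In short: your proposal reproduces everything the paper's proof contains, and the gap you honestly flag is a gap in the paper as well; closing it would be a genuine improvement rather than a routine completion.
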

%For a fixed $A$ as constructed in Lemma \ref{le:A22}, the function $f$ and the polyhedron $A\t K_2$ are not necessarily unique, as in Figure \ref{}.
\begin{proof}
To sketch $A$, we start with a path on black edges and alternating red and blue vertices. This path must be closed, since the graph is finite, and moreover it cannot repeat any vertices, since each vertex has exactly two neighbours of the opposite colour. Hence the path is in fact a cycle. We then continue by sketching a new black cycle along any leftover edges between vertices of opposite colour. The result is a disjoint union of black cycles and alternating red and blue vertices.

Next, we add the edges between vertices of the same colour. Each vertex has exactly two neighbours of the same colour, hence we will have red cycles of red vertices and blue cycles of blue vertices. By Lemma \ref{le:cy}, for each vertex $v$, its neighbours are arranged around it in the cyclic order red, red, blue, blue, hence the two incident edges of the same colour as $v$ are either both internal or both external to the black cycle containing $v$.

In the other direction, any such construction yielding a connected, planar graph generates an element of $A(2,2)$.
\end{proof}
For instance, with one black cycle of length $2n\geq 6$, one red internal cycle of length $n$, and one blue external cycle of length $n$, we obtain the $n$-gonal antiprism.

\begin{prop}
	\label{prop:A32}
	The classes $\ca(2,2,1)$, $\ca(3,1,1)$, and $\ca(3,2)$ are empty.
\end{prop}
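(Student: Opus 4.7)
The plan is to reduce all three emptiness claims to the single claim $\ca(3,2)=\emptyset$, and then to attack that by an Euler-type counting argument enhanced by structural constraints from the planar embedding, in the spirit of Proposition~\ref{prop:emptya}.

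For the reduction I would use the colour-merging inclusions recorded after \eqref{eq:inf0}. Recolouring the $c_3$-vertices of a member of $\ca(2,2,1)$ to $c_1$ yields a member of $\ca(n_1+n_3,n_2)=\ca(3,2)$; likewise, since $n_1=3\geq n_2+n_3=2$, recolouring the $c_3$-vertices of a member of $\ca(3,1,1)$ to $c_2$ yields a member of $\ca(3,2)$. Hence $\ca(2,2,1),\ca(3,1,1)\subseteq\ca(3,2)$, so it suffices to prove $\ca(3,2)=\emptyset$.

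Assume, for contradiction, that $A\in\ca(3,2)$ has $p$ vertices. Then $A$ is $5$-regular, connected and planar, so $q=5p/2$ and $r=3p/2+2$, with $p$ a multiple of $10$ in order for the colour counts $3p/5$ red and $2p/5$ blue to be integral. By Lemma~\ref{le:cy} the cyclic colour-order around each vertex is $R,R,R,B,B$, so every vertex has exactly two $\{R,R\}$-corners, two $\{R,B\}$-corners and one $\{B,B\}$-corner. The standard planarity inequality $2q\geq 3r_3+4(r-r_3)$ yields the lower bound $r_3\geq p+8$.

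For the matching upper bound I would classify the triangular faces by vertex-colour multiset as RRR, RRB, RBB, BBB (with counts $T_1,T_2,T_3,T_4$) and sum the corner contributions by vertex colour to obtain a system of linear inequalities on the $T_i$. The bound derived this way alone is only $r_3\leq 5p/3$, which does not contradict $r_3\geq p+8$ for $p\geq 12$; to sharpen it I would bring in three structural refinements. First, every BBB-triangular face is a $C_3$-component of the $2$-regular blue subgraph $A_B$, whose total vertex count is $2p/5$. Second, the corner-type signatures of length-$4$ non-triangular faces are severely restricted by direct enumeration, and any length-$4$ face consisting entirely of $\{B,B\}$-corners must be bounded by a cycle-component of $A_B$. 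Third, by the argument behind Lemma~\ref{le:cy}, at every vertex of a blue cycle of $A_B$ the three red neighbours lie on a common side of that cycle. Combining these with the corner totals $(2p,2p,p)$ should force $A_B$ to have more short-cycle components, of smaller total length, than its $2p/5$ vertices can provide, giving the desired contradiction.

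The main obstacle is precisely this last step: translating the corner-arithmetic into an outright impossibility for the cycle decomposition of $A_B$. I expect a short case analysis according to which of the linear inequalities on $(T_1,T_2,T_3,T_4)$ are saturated by the hypothetical $A$, with each case terminating either in a violation of the Euler lower bound $r_3\geq p+8$ or in an infeasible decomposition of $A_B$.
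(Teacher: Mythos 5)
Your reduction step is correct and is exactly the paper's: recolouring $c_3$ to $c_1$ (resp.\ to $c_2$) embeds $\ca(2,2,1)$ (resp.\ $\ca(3,1,1)$) into $\ca(3,2)$, so everything hinges on $\ca(3,2)=\emptyset$. The problem is that your argument for that core claim is not a proof but a programme, and the part of it you actually execute provably cannot close. As you yourself compute, the corner-counting linear program on the triangle types $(T_1,T_2,T_3,T_4)$ with corner budgets $(2p,2p,p)$ only gives $r_3\leq 5p/3$, which is compatible with the Euler lower bound $r_3\geq p+8$ for all $p\geq 12$ (indeed for every admissible $p$, a multiple of $10$). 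The three ``structural refinements'' you invoke to sharpen the bound are never converted into inequalities: you do not quantify how many BBB-faces or length-$4$ faces the cycle structure of the blue subgraph $A_B$ forbids, and the concluding ``short case analysis'' is asserted rather than performed. You flag this yourself as the main obstacle, which is an honest assessment: as written, the proposal establishes nothing beyond the reduction, and it is not evident that any corner-arithmetic refinement of this kind suffices, since the obstruction to $\ca(3,2)$ is genuinely topological rather than enumerative.

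For comparison, the paper proves $\ca(3,2)=\emptyset$ with no face counting at all. It looks at the subgraph $A'$ induced by the $c_1$-vertices, which is $3$-regular and planar, and first shows (via Lemma \ref{le:out} on outerplanar $2$-connected graphs having two vertices of degree $2$, applied to an endblock) that no component of $A'$ is outerplanar. This yields a vertex $u$ of $A'$ lying on a cycle $C$ of $A'$ whose interior contains the two $c_2$-neighbours of $u$ but no $c_1$-vertex. By planarity every $c_2$-vertex inside $C$ then has all three of its $c_1$-neighbours on $C$, and the cyclic order $c_1,c_1,c_1,c_2,c_2$ around the middle such neighbour forces a new $c_2$-vertex strictly ``deeper'' inside, with a strictly smaller span of attachment indices along $C$; infinite descent gives the contradiction. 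If you want to salvage your write-up, the cleanest fix is to replace the counting core with an argument of this type; otherwise you must actually derive the sharpened inequalities you allude to, which at present are missing.
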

\begin{proof}
	Since $\ca(2,2,1)\subseteq\ca(3,2)$ and $\ca(3,1,1)\subseteq\ca(3,2)$, it suffices to prove that $\ca(3,2)=\emptyset$. By contradiction, let $A\in\ca(3,2)$. Then $A$ is a $5$-regular, connected, plane graph, with colours assigned to the vertices so that each has neighbours coloured $c_1,c_1,c_1,c_2,c_2$ in this cyclic order, due to Lemma \ref{le:cy}. Let $A'$ be the subgraph of $A$ generated by the vertices of colour $c_1$. %, and $A''$ be the subgraph of $A$ generated by the vertices of colours $c_2$ and $c_3$.
	Clearly $A'$ is a $3$-regular, plane graph (cf.\ Remark \ref{rem:A}).
	
	Consider a connected component $H$ of $A'$. We claim that $H$ is not outerplanar. Suppose by contradiction that it is. If $H$ is $2$-connected, then by Lemma \ref{le:out}, it contains at least two vertices of degree $2$, impossible. Hence $H$ cannot be $2$-connected. Let $H'$ be an endblock of $H$, and $v$ the only separating vertex of $H$ that lies on $H'$. Now on one hand $H'$ is $2$-connected and outerplanar, thus by Lemma \ref{le:out} it contains at least two vertices of degree $2$. On the other hand, since $A'$ is $3$-regular, then by construction in $H'$ every vertex except one is of degree $3$, contradiction. Hence every connected component of $A'$ is not outerplanar.
	%Let $u$ be a vertex of colour $c_2$. Since each vertex has exactly one neighbour of colour $c_2$ and one of colour $c_3$, starting from $u$ we can build in a unique way a cycle $C$ where the colours of the vertices are
	%\[c_2,c_2,c_3,c_3,c_2,c_2,c_3,c_3,\dots,c_2,c_2,c_3,c_3\]
	%in sequence.% It follows that $A''$ is a disjoint union of cycles.
	
	We claim that there exists $u\in V(A')$ lying on a cycle of $A'$
	\[C: u_1,u_2,\dots,u_\ell, \quad\ell\geq 3\]
	such that inside $C$ one finds the two neighbours of $u$ of colour $c_2$, but no vertex of colour $c_1$. Indeed, taking a connected component $H$ of $A'$, since $H$ is not outerplanar, we may find $v_1\in V(H)$ not lying on the external region of $H$. Let $\calR_{v_1}$ be the region of $H$ such that the two neighbours of $v_1$ of colour $c_2$ lie inside of it (since the cyclic order of colours of neighbours around each vertex of $A$ is $c_1,c_1,c_1,c_2,c_2$, these two neighbours lie inside the same region of $H$). Then either we may take $u=v_1$, or there exists $v_2\in V(H)$ lying inside $\calR_{v_1}$. Note that $v_2$ does not lie on the external region of $H$. Let $\calR_{v_2}$ be the region of $H$ such that the two neighbours of $v_2$ of colour $c_2$ lie inside of it. Either $u=v_2$, or there exists $v_3\in V(H)$ lying inside $\calR_{v_2}$. Continuing in this fashion, by finiteness of $A$ we see that $u$ exists.
	
	Let $w_1$ be a neighbour of $u$ of colour $c_2$. Now $w_1$ has three neighbours of colour $c_1$. By planarity of $A$, and since there are no vertices of colour $c_1$ inside of $C$, these three neighbours all lie on $C$. Denote them by \[u_i,u_j,u_k, \quad 1\leq i<j<k\leq\ell,\]
	where $u\in\{u_i,u_j,u_k\}$. Since the cyclic order of colours of neighbours around each vertex of $A$ is $c_1,c_1,c_1,c_2,c_2$, and since $u_{i-1}$ and $u_{i+1}$ are of colour $c_1$, then the other neighbour $w_2$ of $u_j$ of colour $c_2$ lies inside of $C$, as in Figure \ref{fig:A32}. More precisely, $w_2$ lies either inside the cycle
	\[u_i,u_{i+1},\dots,u_j,w_1,\]
	or inside the cycle
	\[u_j,u_{j+1},\dots,u_k,w_1.\]
	Now $w_2$ has three neighbours of colour $c_1$, and by planarity of $A$ these are of the form
	$u_{i'},u_{j'},u_{k'}$, where either $i\leq i'<j'<k'=j$, or $j=i'<j'<k'\leq k$. In any case, we have
	\[k'-i'<k-i.\]
	\begin{figure}[ht]
	\centering
	\includegraphics[width=4.5cm]{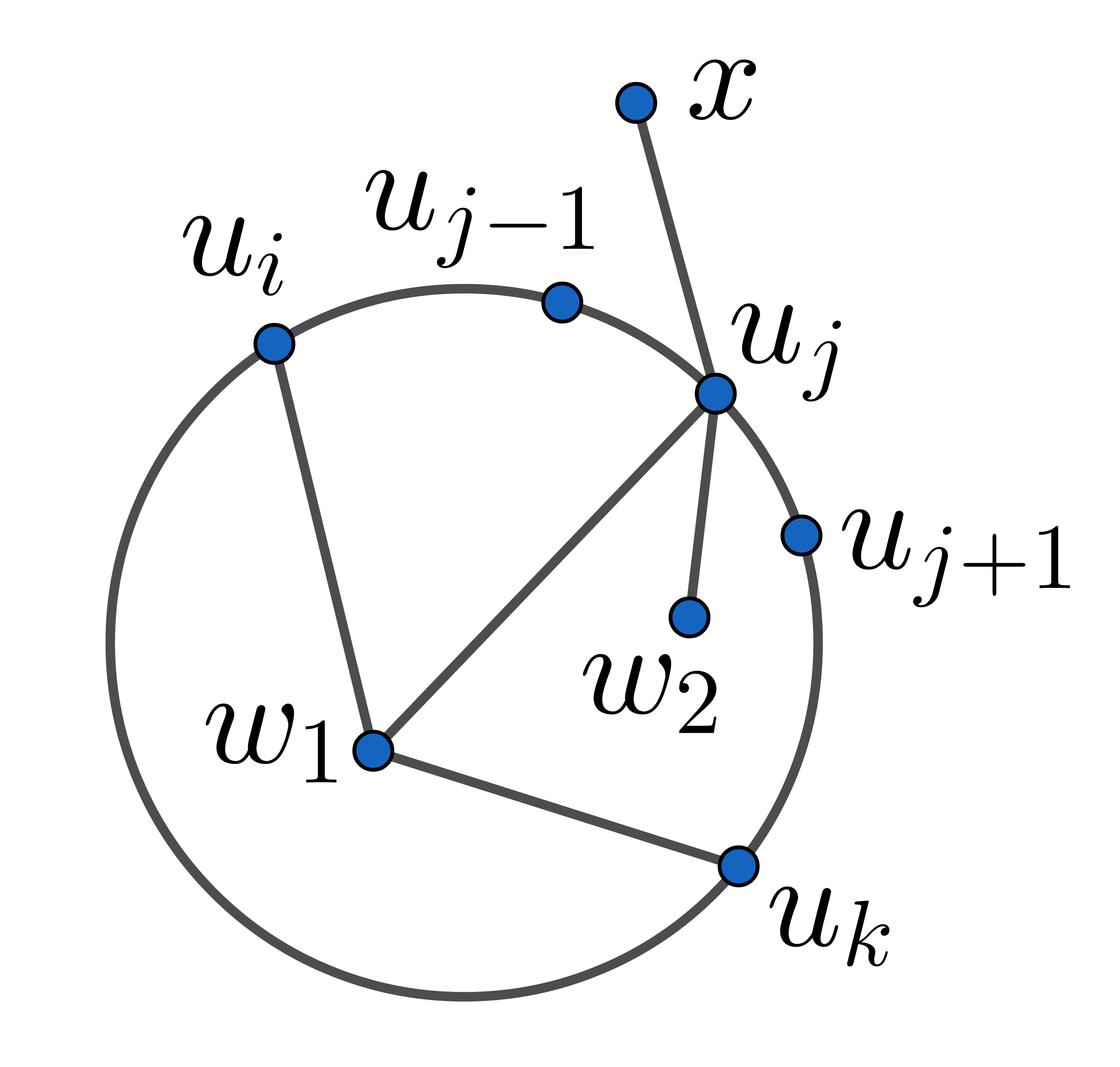}
	\caption{Proposition \ref{prop:A32}.}
	\label{fig:A32}
	\end{figure}
	
	Summarising, given $w_1$ inside $C$ of colour $c_2$ adjacent to $u_i,u_j,u_k$ with $1\leq i<j<k\leq\ell$, we have found a vertex $w_2$ inside $C$ of colour $c_2$ that is adjacent to $u_{i'},u_{j'},u_{k'}$, where $1\leq i'<j'<k'\leq\ell$ and $k'-i'<k-i$. Arguing in the same way, the other vertex $w_3$ of colour $c_2$ adjacent to $u_j'$ is adjacent to $u_{i''},u_{j''},u_{k''}$, with $1\leq i''<j''<k''\leq\ell$ and $k''-i''<k'-i'$, so that continuing in the same fashion we arrive at a contradiction by infinite descent. Thereby, indeed $\ca(2,2,1)=\ca(3,1,1)=\ca(3,2)=\emptyset$.
\end{proof}

\section{Classification of regular polyhedral Sierpi\'{n}ski products}
\label{sec:class}

\subsection{Auxiliary results}
The proof of Theorem \ref{thm:class} still requires a few preparatory considerations. Recall Definition \ref{def:b*} for $\cb_r^*(n_1,n_2,\dots,n_k)$. 

\begin{lemma}
	\label{le:B111}
Each member of $\cb_3^*(1,1,1)$ and $\cb_5^*(1,1,1)$ is $2$-connected.
\end{lemma}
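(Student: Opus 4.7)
The plan is to argue by contradiction, exploiting the fact that for any $B\in\cb_r(1,1,1)$ the ``deficiency sum'' $\sum_{v\in V(B)}(r-\deg_B(v))$ equals exactly $n_1+n_2+n_3=3$, since the only vertices contributing are the three distinguished vertices $x_1,x_2,x_3$ of colours $c_1,c_2,c_3$, each of degree $r-1$.

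First I would verify the easy prerequisite that $|V(B)|\geq 3$, which is immediate from the existence of the three differently-coloured vertices $x_1,x_2,x_3$; hence to establish $2$-connectivity (as defined in the paper), it suffices to rule out a separating vertex. So suppose, for contradiction, that $W=\{v\}$ is a $1$-cut of $B$, and let $J_1,J_2,\dots,J_m$ (with $m\geq 2$) be the connected components of $B-v$.

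By Definition \ref{def:b*} applied with $h=1$, each $J_i$ must satisfy
\[\sum_{u\in V(J_i)}\bigl(r-\deg_B(u)\bigr)\geq 2.\]
Summing over all components and using that every vertex of $B-v$ lies in exactly one $J_i$, this yields
\[\sum_{u\in V(B)\setminus\{v\}}\bigl(r-\deg_B(u)\bigr)\geq 2m\geq 4.\]
On the other hand, the total deficiency of $B$ is
\[\sum_{u\in V(B)}\bigl(r-\deg_B(u)\bigr)=n_1+n_2+n_3=3,\]
so removing $v$ can leave at most $3$, a contradiction. Note the argument is insensitive to whether $v\in\{x_1,x_2,x_3\}$ or not (and in the latter case the contradiction is even sharper), and equally insensitive to the value of $r\in\{3,5\}$, yielding the claim for both $\cb_3^*(1,1,1)$ and $\cb_5^*(1,1,1)$. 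There is no real obstacle here; the only mild subtlety is checking that the definition of $\cb_r^*$ is used with the correct value $h=1$ so that the required lower bound per component is $3-h=2$, which is what forces the contradiction.
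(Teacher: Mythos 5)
Your proof is correct and follows essentially the same route as the paper: by contradiction, each component of $B-v$ must have deficiency at least $3-1=2$ by Definition \ref{def:b*}, while the total deficiency of $B$ is only $n_1+n_2+n_3=3$, which is impossible with at least two components. The only cosmetic difference is that you sum over all $m\geq 2$ components whereas the paper picks just two, and you explicitly check $|V(B)|\geq 3$; both versions are fine.
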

\begin{proof}
Let $B\in\cb_r^*(1,1,1)$, with $r\in\{3,5\}$. By contradiction, if we have a separating vertex $b$ in $B$, then for each component $J$ of $B-b$ the RHS of \eqref{eq:hsum} equals $2$. For distinct components $J_1,J_2$ of $B-b$ we thus have
\[2+2\leq\sum_{v\in V(J_1)}(r-\deg_B(v))+\sum_{v\in V(J_2)}(r-\deg_B(v))\leq\sum_{v\in V(B)}(r-\deg_B(v))=3,\]
contradiction.
\end{proof}

%On the other hand, the graph of edges $\{b_1b_4,b_2b_4,b_3b_4,b_2b_3\}$ is of connectivity $1$, and is a member of $\cb_3^*(2,1,1)$. More generally, 
On the other hand, a member of $\cb_3^*(2,1,1)$ has a vertex of degree $1$, thus its neighbour is a separating vertex.

\begin{rem}
	\label{rem:br}
It follows from Definition \ref{def:b*} that if $B\in\cb_r^*(2,1,1)$, $3\leq r\leq 5$, has a separating vertex $b$, then $B-b$ has exactly two connected components, one containing the red vertex (i.e., colour $c_1$), and the other containing the blue and green vertices.
\end{rem}

\begin{comment}
\begin{rem}
	\label{rem:br2}
	It follows from Definition \ref{def:b*} that if $B\in\cb_r^*(2,1,1)$, $3\leq r\leq 5$, has a minimal $2$-cut $\{b_1,b_2\}$, then each connected component of $B-b_1-b_2$ contains a coloured vertex. Since there exists an embedding of $B$ where the three coloured vertices all lie on the same region, then in fact $B-b_1-b_2$ has exactly two components, each containing at least one coloured vertex.
\end{rem}
\end{comment}
%If $B\in\cb_r(n_1,n_2,n_3)$ then by definition there is an embedding of $B$ such that the vertices of non-maximal degree all lie on the same region. Hence if $B\in\cb^*_r(n_1,n_2,n_3)$ and $\{b_1,b_2\}$ is a $2$-cut in $B$, then $B-b_1-b_2$ has exactly two components.

The following result relates the $2$-connectivity of $A\in\ca(2,1,1)$ and $B\in\cb_r^*(2,1,1)$ with the $2$-connectivity of their product $A\t B$, where $f$ is a function that preserves colours.% It will be needed in the proof of Theorem \ref{thm:class}.

\begin{lemma}
	\label{le:br}
Let $A\in\ca(2,1,1)$, $B\in\cb_r^*(2,1,1)$, $3\leq r\leq 5$, and $f$ be a function that preserves colours. Then $(a,b)$ is a separating vertex of $A\t B$ if and only if $a$ is a separating vertex of $A$ and $b$ is a separating vertex of $B$.
\end{lemma}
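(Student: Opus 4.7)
The plan is to apply Theorem \ref{thm:sep}, which characterises separating vertices $(a,b) \in V(A \t B)$ via three alternative conditions (i), (ii), (iii). Since $A \in \ca(2,1,1)$ and $f$ preserves colours, every $a \in V(A)$ has neighbours of all three colours, so the set $\{f(a') : aa' \in E(A)\}$ equals the set of the three coloured vertices of $B$. Condition (iii) would require these three distinct vertices to all coincide with $b$, which is impossible. Condition (ii) requires $b$ separating in $B$ together with a component of $B - b$ disjoint from this image set; but Remark \ref{rem:br} shows $B - b$ has exactly two components, one containing the red vertex and the other containing blue and green, so both components meet the image set. Both (ii) and (iii) are therefore ruled out, and $(a,b)$ is separating in $A \t B$ if and only if condition (i) holds; and (i) immediately yields $a$ separating in $A$.

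For the forward implication, once (i) is available, I would extract from the partition $H_1, \ldots, H_n$ a cross-$H$ pair of neighbours of $a$ with distinct colours. Since each $H_i$ contains at least one neighbour of $a$ (because every component of $A-a$ does), and the four neighbours of $a$ decompose as two reds, one blue, and one green, a short case analysis based on where the unique blue and green neighbours land in the partition shows that a distinct-colour cross pair $h_i, h_j$ always exists: if blue and green lie in different parts, they already form such a pair; otherwise both lie in the same part and, since $n \geq 2$, another part must contain a red neighbour to pair with. Condition (i) then forces $b$ to lie on every path in $B$ between the two distinct coloured vertices $f(h_i), f(h_j)$, so $b$ is a separating vertex of $B$.

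For the backward implication, given $a$ separating in $A$ and $b$ separating in $B$, I would verify condition (i) by constructing a suitable partition. The natural choice sets $H_1$ to be the union of components of $A - a$ containing a red neighbour of $a$, and $H_2$ the union of components containing the blue or green neighbour. Any cross pair for this partition is then (red, non-red), and Remark \ref{rem:br} guarantees that $b$ separates red from blue and red from green in $B$, so the partition condition holds.

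The main obstacle is showing that $H_1 \cap H_2 = \emptyset$, i.e.\ that no component of $A - a$ contains both a red and a non-red neighbour of $a$. This structural fact is what allows the partition to be well-defined, and I expect to prove it from the planar embedding combined with the cyclic colour order red, red, blue, green around $a$ from Lemma \ref{le:cy} and the $2$-regular colour subgraphs of Remark \ref{rem:A}. The idea is to analyse the blocks of $A$ incident to $a$: their edges at $a$ form consecutive arcs in the planar cyclic order, and the red cycle through the red neighbours of $a$ together with the blue-green cycle through its blue and green neighbours (which are forced to respect the bb-gg pattern) constrain these arcs to sit within $\{r_1, r_2\}$ or within $\{b, g\}$, preventing mixed components.
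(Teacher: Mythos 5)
Your reduction to Theorem \ref{thm:sep} and your elimination of conditions \ref{eq:sepii} and \ref{eq:sepiii} coincide with the paper's argument, and your backward implication is in fact more careful than the paper's one-line version: the structural fact you flag as the main obstacle is true and your sketch works. Since $A$ is $4$-regular, every component of $A-a$ meets $N(a)$ in an even, positive number of vertices, hence in exactly two, so $A-a$ has exactly two components; their edges at $a$ occupy contiguous arcs of the rotation $r_1,r_2,u,v$ given by Lemma \ref{le:cy}, leaving only the splits $\{r_1,r_2\}\mid\{u,v\}$ and $\{r_1,v\}\mid\{r_2,u\}$; and the mixed split is impossible because either $a$ is red, so that $r_1,r_2$ are the two neighbours of $a$ on a red cycle (Remark \ref{rem:A}) and are joined by a path avoiding $a$, or $a$ is blue or green, so that $u,v$ are the two neighbours of $a$ on a blue--green cycle and lie in one component of $A-a$.

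The genuine gap is the final step of your forward implication, ``$b$ lies on every $f(h_i)f(h_j)$-path, so $b$ is a separating vertex of $B$''. That inference requires $b\notin\{f(h_i),f(h_j)\}$, and you cannot arrange this: by the structural fact just proved, the blue and green neighbours of $a$ always lie in the same part, so every available cross pair has the red vertex $x_1$ of $B$ as one of its images, and $b=x_1$ satisfies condition \ref{eq:sepi} vacuously. This is not merely a hole in the write-up but a counterexample to the statement: if $a$ separates $A$ and $C_1$ is the component of $A-a$ containing $r_1,r_2$, then both edges joining $\bigcup_{c\in V(C_1)}cB$ to the rest of $A\t B$ end at the single vertex $(a,x_1)$, so $(a,x_1)$ is separating in $A\t B$; yet $x_1$ is never separating in $B\in\cb_r^*(2,1,1)$, because \eqref{eq:hsum} leaves total deficiency only $2$ in $B-x_1$, not enough for two components (for $r=3$ the vertex $x_1$ even has degree $1$). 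As Section \ref{sec:AA} asserts that $\ca(2,1,1)$ contains graphs of connectivity $1$, the ``only if'' direction fails for $b=x_1$. The paper's own proof makes the same unjustified jump (``\ref{eq:sepi} holds, thus \dots\ $b$ is a separating vertex of $B$''), so the remedy is not to push your argument harder but to weaken the statement, say to ``$(a,b)$ is separating if and only if $a$ separates $A$ and either $b=x_1$ or $b$ separates $B$''; this corrected form still supports the lemma's uses in Lemma \ref{le:3conn} and in the proof of Theorem \ref{thm:class}.
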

\begin{proof}
Let $a$ be a separating vertex of $A$, and $b$ a separating vertex of $B$. By definition of $\ca(2,1,1)$, $a$ has a neighbour $a_1$ of colour $c_1$ and a neighbour $a_2$ of colour $\neq c_1$ that lie in different connected components of $A-a$. According to Remark \ref{rem:br}, $f(a_1)$ and $f(a_2)$ are in distinct components of $B-b$, so that $b$ lies on every $f(a_1)f(a_2)$-path. Recalling Theorem \ref{thm:sep}, $A,B$ satisfy condition \ref{eq:sepi}, hence $(a,b)$ is a separating vertex of $A\t B$.

Vice versa, let $(a,b)$ be a separating vertex of $A\t B$. Then one of the conditions \ref{eq:sepi}, \ref{eq:sepii}, \ref{eq:sepiii} of Theorem \ref{thm:sep} holds. By definition of $\ca(2,1,1)$, every vertex $u$ of $A$ satisfies
\[|\{f(u') : uu'\in E(A)\}|=3,\]
hence \ref{eq:sepiii} does not hold. By Remark \ref{rem:br}, \ref{eq:sepii} does not hold. Thereby, \ref{eq:sepi} holds, thus $a$ is a separating vertex of $A$ and $b$ is a separating vertex of $B$.
\end{proof}

To prove Theorem \ref{thm:class}, we will also require a technical result specifically on necessary and sufficient conditions for the $3$-connectivity of $A\t B$ when $A\in\ca(2,1,1)$ and $B\in\cb_r^*(2,1,1)$. Recall Definition \ref{def:hash} for the subclass $\ca_\#(2,1,1)$ of $\ca(2,1,1)$.

\begin{lemma}
	\label{le:3conn}
Let $A\in\ca(2,1,1)$, $B\in\cb_r^*(2,1,1)$, and $f$ a function that preserves colours. Then $A\t B$ is $3$-connected if and only if $A\in\ca_\#(2,1,1)$ and $A$ is $2$-connected.
\end{lemma}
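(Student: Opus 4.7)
The plan is to prove both directions by combining Proposition~\ref{prop:kc} on the structure of minimal $k$-cuts in Sierpi\'{n}ski products, Lemma~\ref{le:br} on $2$-connectivity of $A\t B$, and the $\cb_r^*$ condition from Definition~\ref{def:b*}, organised around a case analysis on the colour of $b_1$.

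For the $(\Leftarrow)$ direction, I first observe that Lemma~\ref{le:br} already yields $2$-connectivity of $A\t B$ from the $2$-connectivity of $A$. To rule out minimal $2$-cuts $\{(a_1,b_1),(a_2,b_2)\}$ I would invoke Proposition~\ref{prop:kc}, which leaves three cases: (a) $\{a_1,a_2\}$ is a $2$-cut of $A$; (b) $a_1=a_2$ and $\{b_1,b_2\}$ is a $2$-cut of $B$; (c) $a_1=a_2$ with $f$ mapping every neighbour of $a_1$ into $\{b_1,b_2\}$. Case (c) is immediate since, in $\ca(2,1,1)$, every vertex has $|f(N_A(a_1))|=3$. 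Case (b) follows because the $\cb_r^*$ hypothesis forces every component of $B-b_1-b_2$ to contain a coloured vertex, hence an $f$-image of some neighbour of $a_1$; combined with the connectivity of $A-a_1$, this supplies a detour around the cut via $(V(A)\setminus\{a_1\})\times V(B)$. The substantive case is (a): $\ca_\#(2,1,1)$ supplies, for some $a_i$ (say $a_1$), two components $H_1,H_2$ of $A-a_1-a_2$ each containing exactly one red and one non-red neighbour of $a_1$. I would split by the colour of $b_1$. When $b_1$ is the red vertex of $B$, I route through $a_1B$ between the two non-red neighbours, using that $\cb_r^*$ forbids the red vertex from being a $1$-cut of $B$ (the remaining coloured weight is only $2$, which cannot split into two components each of weight $\geq 2$). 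When $b_1$ is blue, green, or uncoloured, I route between the two distinct red neighbours, one in each $H_i$, so the $B$-subpath is the trivial length-$0$ walk at the red vertex, which differs from $b_1$. In both sub-cases the resulting path avoids $a_2B$ entirely, hence avoids $(a_2,b_2)$ as well.

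For $(\Rightarrow)$ I argue the contrapositive. If $A$ has a separating vertex $a$, bridgelessness of the $4$-regular $A$ together with Lemma~\ref{le:cy} forces the four neighbours of $a$ to split as a contiguous $2+2$ pattern around $a$, leaving only (red, red)(blue, green) or (red, blue)(red, green); in each pattern I would locate an explicit separating vertex or $2$-cut of $A\t B$ by selecting the vertex or pair of $aB$ whose removal blocks every entry from one side of $A-a$ into $aB$. If instead $A$ is $2$-connected but $A\notin\ca_\#(2,1,1)$, some $2$-cut $\{a_1,a_2\}$ has, for both $i=1,2$, a non-$\ca_\#$ distribution of $a_i$'s neighbours: each such distribution either groups both red neighbours on one side or places both non-red neighbours on one side of $A-a_1-a_2$, so choosing $b_i$ equal to the coloured vertex of $B$ matching the concentrated colour blocks every entry from that side and yields a $2$-cut of $A\t B$.

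The main obstacle is the re-routing in case~(a) of the $(\Leftarrow)$ direction, which must succeed uniformly over every $b_1\in V(B)$; the key point is that the $\cb_r^*$ condition is calibrated precisely so that the red vertex of $B$ cannot itself be a $1$-cut, making the non-red detour through $B-b_1$ available exactly when $b_1$ is the red vertex.
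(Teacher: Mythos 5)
Your $(\Leftarrow)$ direction is essentially sound and close in spirit to the paper's: the paper also reduces to a $2$-cut $\{(a_1,b_1),(a_2,b_2)\}$ projecting to a $2$-cut of $A$, and then exhibits two alternative routes through $a_iB$ (one through $(a_i,x_1)$ alone, one through $(a_i,x_2),\dots,(a_i,x_3)$ avoiding $x_1$, which exists because \eqref{eq:hsum} forces the red vertex of $B$ not to be a separating vertex); your case split on the colour of $b_1$ is just a reorganisation of that argument, and your dispatch of cases (b) and (c) of Proposition~\ref{prop:kc} is correct.

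The genuine gap is in the $(\Rightarrow)$ direction, in the sub-case where $A$ is $2$-connected but some $2$-cut $\{a_1,a_2\}$ violates the $\ca_\#$ condition. Your recipe is to pick, for each $i$, the coloured vertex $x_{c_i}$ of $B$ ``matching the concentrated colour'' and claim that removing $(a_1,x_{c_1}),(a_2,x_{c_2})$ ``blocks every entry from that side.'' But the component of $A-a_1-a_2$ whose entries into $a_1B$ are blocked by $(a_1,x_{c_1})$ need not be the same component whose entries into $a_2B$ are blocked by $(a_2,x_{c_2})$. For instance, the failing distribution may give $H_1$ the neighbours $\{r,r\}$ of $a_1$ and $\{r,b,g\}$ of $a_2$, while $H_2$ gets $\{b,g\}$ of $a_1$ and $\{r\}$ of $a_2$: then no single component has all of its gates removed by your pair, so the justification as stated does not produce a cut-set. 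The pair can still be shown to disconnect the product, but only via a different decomposition (into $H_1\cup a_2B$ versus $H_2\cup a_1B$), and that decomposition fails whenever $a_1a_2\in E(A)$ and the cross edge $(a_1,f(a_2))(a_2,f(a_1))$ survives the removal; ruling that out requires the parity observation that each component of $A-a_1-a_2$ receives an even number of edges from $\{a_1,a_2\}$ (all degrees in $A$ are even), which forces a component with exactly one neighbour of each $a_i$ whenever $a_1a_2\in E(A)$ or there are three or more components. None of this is in your sketch, and the cases of $a_1a_2\in E(A)$ and of more than two components are not addressed at all; also, for the split $\{r\}\mid\{r,b,g\}$ ``the concentrated colour'' is not even well defined (the two non-red neighbours have different colours). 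The paper avoids this by arguing in the other direction: it assumes $A\t B$ is $3$-connected, tests the specific candidate cut-sets $\{(a_1,b),(a_2,b)\}$ (with $b$ a separating vertex of $B$, using Remark~\ref{rem:br}) and $\{(a_1,x_i),(a_2,x_{i'})\}$ (when $B$ is $2$-connected), and extracts the $\ca_\#$ structure from the fact that none of these is a cut-set.
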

\begin{proof}
Let $A\t B$ be $3$-connected. By contradiction, let $a$ be a separating vertex of $A$. By definition of Sierpi\'{n}ski product, given vertices $a',a''$ in distinct components of $A-a$, then every path between a vertex in $a'B$ and a vertex in $a''B$ contains elements of $aB$. On the other hand, by Definition \ref{def:b*}, the only vertices of $aB$ adjacent to vertices outside of $aB$ are $(a,x_1)$, $(a,x_2)$, and $(a,x_3)$, where $x_i$ is the vertex in $B$ of colour $c_i$. Moreover, $(a,x_3)$ is adjacent to exactly one vertex outside of $aB$, thus $\{(a,x_1),(a,x_2)\}$ is a $2$-cut in $A\t B$, contradiction. Thereby, $A$ is $2$-connected.

Take a $2$-cut $\{a_1,a_2\}$ in $A$. By definition of Sierpi\'{n}ski product, given vertices $a',a''$ in distinct components of $A-a_1-a_2$, then every path between a vertex in $a'B$ and a vertex in $a''B$ contains elements of $a_1B$ or $a_2B$.

Suppose that $B$ has a separating vertex $b$. Recalling Remark \ref{rem:br}, $B-b$ has exactly two connected components, one containing the red vertex $x_1$ and the other containing the blue and green vertices $x_2,x_3$. Since $A\t B$ is $3$-connected, the set $\{(a_1,b),(a_2,b)\}$ is not a cut-set. On the other hand, $\{a_1,a_2\}$ is a $2$-cut in $A$. Therefore, there exists $i\in\{1,2\}$ such that $(a_i,x_1)$ is adjacent to vertices $(u_j,y_j)\in V(H_j)$ for all components $H_j$ of $A-a_1-a_2$. It follows that $A-a_1-a_2$ has exactly two components $H_1,H_2$. Similarly, in order for $\{(a_1,b),(a_2,b)\}$ to not be a cut-set, $(a_i,x_2)$ is adjacent to a vertex in $H_1$ and $(a_i,x_3)$ is adjacent to a vertex in $H_2$. Thereby, $H_1$ contains the blue vertex and one of the red vertices adjacent to $a_i$, while $H_2$ contains the green vertex and the other red vertex adjacent to $a_i$ i.e., indeed $A\in\ca_\#(2,1,1)$.

Now suppose instead that $B$ is $2$-connected. Recall that given vertices $a',a''$ in distinct components of $A-a_1-a_2$, then every path between a vertex in $a'B$ and a vertex in $a''B$ contains elements of $a_1B$ or $a_2B$. Since $A\t B$ is $3$-connected, the set
\[\{(a_1,x_i),(a_2,x_i')\}, \quad i,i'\in\{1,2,3\}\]
is not a cut-set. Hence there exists $i\in\{1,2\}$ such that $(a_i,x_1)$ is adjacent to some $(u_{1},y_{1})$ where $u_{1}\in V(H_1)$ and to some $(u_{2},y_{2})$ where $u_{2}\in V(H_2)$, with $H_1,H_2$ distinct connected components of $A-a_1-a_2$. Moreover if $(a_i,x_2)$ is adjacent to some $(v_{j},z_{j})$ where $v_j$ is a vertex in a component $H_j$ of $A-a_1-a_2$, then $(a_i,x_2)$ cannot be the only vertex in $a_iB$ adjacent to vertices with first entry in $V(H_j)$. The same statement may be made for $(a_i,x_3)$. Recalling that $(a_i,x_1),(a_i,x_2),(a_i,x_3)$ are the only vertices in $a_iB$ adjacent to vertices outside of $a_iB$, it follows that $A-a_1-a_2$ has exactly two components $H_1,H_2$, and $H_1$ contains the blue vertex and one of the red vertices adjacent to $a_i$, while $H_2$ contains the green vertex and the other red vertex adjacent to $a_i$ i.e., $A\in\ca_\#(2,1,1)$.

Vice versa, let $A$ be $2$-connected, $A\in\ca_\#(2,1,1)$, $B\in\cb_r^*(2,1,1)$, and let us show that $A\t B$ is $3$-connected. By Lemma \ref{le:br}, since $A$ is $2$-connected we already know that $A\t B$ is $2$-connected. By contradiction, let
\[\{(a_1,b_1),(a_2,b_2)\}\]
be a $2$-cut in $A\t B$. Calling $G_1,G_2,\dots,G_n$ the components of $A\t B-(a_1,b_1)-(a_2,b_2)$, we define
\[H_j=\{v\in V(A-a_1-a_2) : (v,b_1)\in V(G_j)\}, \quad 1\leq j\leq n.\]
By definition of Sierpi\'{n}ski product, $H_1,H_2,\dots,H_n$, $n\geq 2$ are the components of $A-a_1-a_2$. Hence $\{a_1,a_2\}$ is a $2$-cut in $A$, and in particular $a_1\neq a_2$ as $A$ is $2$-connected. Since $A\in\ca_\#(2,1,1)$, we deduce that $n=2$, and moreover there exists $i\in\{1,2\}$ such that $H_1$ contains the blue neighbour and one of the red neighbours of $a_i$, while $H_2$ contains the green neighbour and the other red neighbour of $a_i$. According to Remark \ref{rem:br}, if $b$ is a separating vertex of $B$, then $B-b$ has exactly two components, one containing the red vertex $x_1$, and the other containing the blue and green vertices $x_2,x_3$. Thus (whether $B$ has a separating vertex or not) we may take
\[b_1,b_2\in\{x_1,x_2,x_3\}\]
and moreover, there exists a $x_2x_3$-path $\calP$ in $B$ not containing $x_1$. Hence there exists a path in $A\t B$ between $G_1$ and $G_2$ containing $(a_i,x_1)$ but neither of $(a_i,x_2),(a_i,x_3)$, and there exists a path in $A\t B$ between $G_1$ and $G_2$ containing $(a_i,x_2),(a_i,x_3)$ but not $(a_i,x_1)$, via the sub-path
\[(a_i,x_2),\dots,(a_i,x_3),\]
where the first entries are all equal to $a_i$, and the second entries are the elements of $\calP$ taken in order. Since $\{(a_1,b_1),(a_2,b_2)\}$ is a $2$-cut in $A\t B$, it follows that $a_1=a_2$ and w.l.o.g.\ $b_1=x_1$ and $b_2=x_2$, contradicting $a_1\neq a_2$.
\end{proof}

\subsection{Proof of Theorem \ref{thm:class}}
%We are ready to prove Theorem \ref{thm:class}.
%\begin{proof}[Proof of Theorem \ref{thm:class}]

Let $A\t B$ be an $r$-regular polyhedron, $3\leq r\leq 5$. By Proposition \ref{prop:col}, there exists $1\leq k\leq 3$ such that $A\in\ca(n_1,n_2,\dots,n_k)$ and $B\in\cb_r(n_1,n_2,\dots,n_k)$, and $f$ maps each vertex of $A$ to the vertex of $B$ of the same colour. Since $A\t B$ is $3$-connected, we have $B\in\cb_r^*(n_1,n_2,\dots,n_k)$. By Lemma \ref{le:deg3}, we know that $A,B$ are connected, planar graphs, and $\delta(A)\geq 3$. We deduce that
\[3\leq n_1+n_2+\dots+n_k\leq 5.\]
Also due to Lemma \ref{le:deg3}, $k=|\ii(f)|\geq 2$. 

Suppose that $B\simeq K_2$. Then $k=2$, so that by definition of Sierpi\'{n}ski product each vertex of $A$ has degree
\[(r-1)+(r-1),\]
hence by planarity $r=3$, thus $A\in\ca(2,2)$. %Let us prove $3$-connectivity for $A$. By contradiction, let $\{a_1,a_2\}$ be a $2$-cut in $A$. Since $A\t B$ is obtained from $A$ by expanding all the vertices, then deleting from $A\t B$ the two edges obtained from expanding $a_1,a_2$ we obtain a disconnected graph, contradiction.
Since $A\t B$ is $3$-connected, by Lemma \ref{le:A22conn}, $A$ is $3$-connected. We have obtained scenario 1.

Henceforth assume that $B\not\simeq K_2$. By Corollary \ref{cor:imf}, we have $k\geq 3$, so that ultimately $k=3$.

%Let us show that if $B\not\simeq K_2$, then $B$ is $2$-connected. By contradiction, if $b$ is a separating vertex, then there exists a connected components $J$ of $B-b$ containing at most one coloured vertex, i.e. at most one vertex in the image of $f$. If $J$ contains no coloured vertices, then for every $a\in V(A)$, $(a,b)$ is a separating vertex of $A\t B$, contradiction. If instead $J$ contains a coloured vertex $b'$, then for every $a\in V(A)$, $\{(a,b),(a,b')\}$ is a $2$-cut of $A\t B$, contradiction. Thus $B$ is $2$-connected. Hence in $B$ every region is delimited by a cycle.

%If $\{b_1,b_2\}$ is a $2$-cut in $B$, then each component of $B-b_1-b_2$ must contain one of the vertices of non-maximal degree in $B$, otherwise for every $a\in V(A)$,
%\[\{(a,b_1),(a,b_2)\}\]
%would be a $2$-cut in $A\t B$. Thus $B\in\cb^*_r(n_1,n_2,n_3)$.
	
Let $A\t B$ be a cubic polyhedron. We write the degree sequence of $B$,
\[\sigma(B) : 3,3,\dots,3,2^y,1^z, \qquad y,z\geq 0.\]
By Proposition \ref{prop:col}, the number $y+z$ of coloured vertices in $B$ equals $k=|\ii(f)|=3$, and moreover $n_1+n_2+\dots+n_k=y+2z$, hence
\[\begin{cases}
	y+z=3
	\\
	3\leq y+2z\leq 5.
\end{cases}\]
There are three cases to inspect for $(y,z)$, namely
$(1,2)$, $(2,1)$, and $(3,0)$.

If $y=1$ and $z=2$, then $A\in\ca(2,2,1)$, contradicting Proposition \ref{prop:A32}. Let $y=3$ and $z=0$. Then
\[\sigma(B) : 3,3,\dots,3,2,2,2,\]
and $A$ is $3$-regular, with colours assigned to the vertices so that each has neighbours coloured red, blue, green. That is to say, $A\in\ca(1,1,1)$. By Lemma \ref{le:A111}, $A$ is $2$-connected. Let us prove $3$-connectivity for $A$. By contradiction, let $\{a_1,a_2\}$ be a $2$-cut in $A$. Since $a_1,a_2$ are of degree $3$, then there exist edges $a_1a_3$ and $a_2a_4$ such that
\[A-a_1a_3-a_2a_4\]
is disconnected. Let $c_i$ be the colour of $a_3$ and $c_j$ the colour of $a_4$. Call $b_i$ the vertex of $B$ of colour $c_i$, and $b_j$ the vertex of $B$ of colour $c_j$. Then
\[\{(a_1,b_i),(a_2,b_j)\}\]
is a $2$-cut in $A\t B$, contradiction. We have obtained scenario 2.

Let $y=2$ and $z=1$. Then
\[\sigma(B) : 3,3,\dots,3,2,2,1,\]
and $A$ is $4$-regular, with colours assigned to the vertices so that each has neighbours coloured red, red, blue, green in this cyclic order. That is to say, $A\in\ca(2,1,1)$. Applying Lemma \ref{le:br}, since $B$ has a vertex of degree $1$ and $A\t B$ does not have a separating vertex, we deduce that $A$ is $2$-connected. Furthermore, by Lemma \ref{le:3conn}, $A\in\ca_\#(2,1,1)$. We have obtained scenario 3.

Now let $A\t B$ be a quartic polyhedron. We have $A\in\ca(n_1,n_2,n_3)$, and $B\in\cb_4(n_1,n_2,n_3)$. We write the degree sequence of $B$,
\[\sigma(B) : 4,4,\dots,4,3^x,2^y,1^z, \qquad x,y,z\geq 0.\]
By Proposition \ref{prop:col}, the number $x+y+z$ equals $k=|\ii(f)|=3$, and moreover $n_1+n_2+n_3=x+2y+3z$, hence
\[\begin{cases}
	x+y+z=3
	\\
	3\leq x+2y+3z\leq 5.
\end{cases}\]
We deduce that $z\leq 1$. Further, by the handshaking lemma, $x+z$ is even. Consolidating these conditions, we see that $z=1$ implies
\[2=x+y\leq x+2y\leq 2,\]
thus $y=0$ and $x=2$, but then $x+z$ is odd, contradiction. Thereby, $z=0$. It follows that $x+y=3$, $x+2y\leq 5$, and $x$ is even. If $x=0$ then $y=3$ hence $x+2y=6$, contradiction.

Hence $x=2$, thus $y=1$. It follows that
\[\sigma(B) : 4,4,\dots,4,3,3,2.\]
We also obtain $n_1+n_2+n_3=4$, hence $A$ is $4$-regular. In fact, $n_1=4-2=2$, and $n_2=n_3=4-3=1$, thus every $a\in V(A)$ has neighbours coloured red, red, blue, green in this cyclic order. That is to say, $A\in\ca(2,1,1)$. By Lemma \ref{le:3conn}, in fact $A\in\ca_\#(2,1,1)$ and $A$ is $2$-connected. We have obtained scenario 4.

Now let $A\t B$ be a quintic polyhedron. By Proposition \ref{prop:col}, we have $A\in\ca(n_1,n_2,n_3)$, and $B\in\cb_5(n_1,n_2,n_3)$. We write the degree sequence of $B$,
\[\sigma(B) : 5,5,\dots,5,4^w,3^x,2^y,1^z, \qquad w,x,y,z\geq 0.\]
By Proposition \ref{prop:col}, the number $w+x+y+z$ of coloured vertices in $B$ equals $|\ii(f)|=3$, and moreover $n_1+n_2+n_3=w+2x+3y+4z$, hence
\[\begin{cases}
	w+x+y+z=3
	\\
	3\leq w+2x+3y+4z\leq 5.
\end{cases}\]
We deduce that $z\leq 1$. If $z=1$, then $w+2x+3y+4z\geq 2+4=6$, impossible. It follows that $z=0$. We are left with
\[\begin{cases}
	w+x+y=3
	\\
	3\leq w+2x+3y\leq 5.
\end{cases}\]
There are four solutions for $(w,x,y)$, namely
\[(1,2,0),\ (2,0,1),\ (2,1,0),\ (3,0,0).\]

If $w=1$, $x=2$ and $y=0$, then $A\in\ca(2,2,1)$. If $w=2$, $x=0$ and $y=1$, then $A\in\ca(3,1,1)$. However these classes of graphs are both empty due to Proposition \ref{prop:A32}. Let $w=3$, and $x=y=0$. Then
\[\sigma(B) : 5,5,\dots,5,4,4,4,\]
and $A$ is $3$-regular, with colours assigned to the vertices so that each has neighbours coloured red, blue, green. That is to say, $A\in\ca(1,1,1)$. We have obtained scenario 5 (the $3$-connectivity of $A$ is proven as in scenario 2).

Let $w=2$, $x=1$, and $y=0$. Then
\[\sigma(B) : 5,5,\dots,5,4,4,3,\]
and $A$ is $4$-regular, with colours assigned to the vertices so that each has neighbours coloured red, red, blue, green in this cyclic order. That is to say, $A\in\ca(2,1,1)$. By Lemma \ref{le:3conn}, we deduce that $A\in\ca_\#(2,1,1)$ and $A$ is $2$-connected. We have obtained scenario 6.
%Let us show that the connectivity of $A\t B$ is $3$. We check that for $r\geq 4$, i.e. in scenarios 4,5, and 6, there is always at least one vertex of degree $r$ in $B$, otherwise the degree sequence $\sigma(B)$ would not be graphical. Then in these scenarios, calling $\{b_1,b_2,b_3\}$ the vertices of $B$ of degree less than $r$, for every $a\in V(A)$
%\[\{(a,b_1),(a,b_2),(a,b_3)\}\]
%is a $3$-cut in $A\t B$.

Vice versa, let us show that in each of the six scenarios, if $A,B$ have the properties described in Table \ref{t:1}, and $f$ preserves colours, then $A\t B$ is a regular polyhedron. Regularity and planarity of $A\t B$ follow from Proposition \ref{prop:col}. It remains to prove $3$-connectivity of the product in each scenario.

In scenario 1, since $A$ is $3$-connected, by Lemma \ref{le:A22conn} $A\t B$ is also $3$-connected. In scenarios 3, 4, and 6, by Lemma \ref{le:3conn} $A\t B$ is $3$-connected.

In scenarios 2 and 5, by contradiction, let $(a,b)$ be a separating vertex of $A\t B$. By Theorem \ref{thm:sep}, either $a$ is a separating vertex of $A$, contradicting Lemma \ref{le:A111}, or $b$ is a separating vertex of $B$, contradicting Lemma \ref{le:B111}, or every neighbour of $a$ has the same image under $f$ (i.e., they are of the same colour), impossible as $A\in\ca(1,1,1)$. Thus $A\t B$ is $2$-connected. Now by contradiction let
\[
\{(a_1,b_1),(a_2,b_2)\}
\]
be a minimal $2$-cut in $A\t B$. By Proposition \ref{prop:kc}, either $\{a_1,a_2\}$ is a $2$-cut in $A$, contradicting the assumption that $A$ is $3$-connected, or $a_1=a_2=a$ and $\{b_1,b_2\}$ is a $2$-cut in $B$, or $a_1=a_2=a$ and every neighbour of $a$ has one of two colours, impossible as $A\in\ca(1,1,1)$. Analysing the proof of Proposition \ref{prop:kc}, we deduce that there exist $x,y\in V(B)$ such that every $xy$-path in $B$ contains one of $b_1,b_2$, and moreover every $(a,x)(a,y)$-path in $A\t B$ contains one of $\{(a,b_1),(a,b_2)\}$. Let $J_x,J_y$ be the components of $B-b_1-b_2$ containing $x,y$ respectively. By \eqref{eq:hsum}, $J_x$ contains a vertex $u$ of degree $2$ in $B$, and $J_y$ contains a vertex $v\neq u$ of degree $2$ in $B$. We find a path $P_1$ in $aJ_x$ from $(a,x)$ to $(a,u)$, a path $P_2$ in $aJ_y$ from $(a,v)$ to $(a,y)$. Let $a_u,a_v$ be the neighbours of $a$ of same colour as $u,v$ respectively. Since $A-a$ is connected, we can find in this graph a path between $a_u$ and $a_v$. by Lemma \ref{le:path}, we can find a path $P$ in $A\t B$ from $(a_u,f(a))$ to $(a_v,f(a))$ that does not contain any vertex from $aB$. Combining the paths and edges
\[P_1,(a,u)(a_u,f(a)),P,(a_v,f(a))(a,v),P_2,\]
we find a $(a,x)(a,y)$-path in $A\t B$ containing neither of $(a,b_1),(a,b_2)$, contradiction. Hence $A\t B$ is indeed $3$-connected. The proof of Theorem \ref{thm:class} is complete.

\section{Families of polyhedral Sierpi\'{n}ski products}
\label{sec:fam}

\subsection{$K_4\t B$}
In this section, we characterise all polyhedra of the form $K_4\t B$, without assuming that $B$ is a polyhedron as we did in Theorem \ref{thm:siepol}, and without assuming that the product is a regular graph, as we did in Theorem \ref{thm:class}. These products are notable as $K_4$ is the smallest polyhedron.

We write $V(K_4)=\{a_1,a_2,a_3,a_4\}$.
\begin{prop}
	\label{prop:K4B}
Let $B\not\simeq K_4$. Then $K_4\t B$ is a polyhedron if and only if the following conditions are all fulfilled:
\begin{itemize}
%\item
%$A\simeq K_4$;
\item
$f$ is injective;
\item
$B$ is planar and $2$-connected;
\item
if $\{b',b''\}$ is a $2$-cut in $B$, then the graph $B-b'-b''$ contains exactly two connected components, and each of them contains exactly two of $f(a_1),f(a_2),f(a_3),f(a_4)$;
\item
there exists a planar embedding of $B$ such that $f(a_1),f(a_2),f(a_3),f(a_4)$ all lie on the same region.
\end{itemize}
\end{prop}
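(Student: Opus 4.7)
The plan is to handle the two directions separately, using Lemma \ref{le:deg3}, Theorem \ref{thm:sep}, and Corollary \ref{cor:imf} for the connectivity arguments, together with \cite[Theorem 2.16]{kpzz19} for planarity.

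For the forward direction (assuming $K_4\t B$ is a polyhedron), Lemma \ref{le:deg3} immediately yields that $B$ is planar and connected, and $B\not\simeq K_2$ (since $\delta(K_4)=3<4$). To show $f$ is injective, note that $|\ii(f)|\geq 3$ by Corollary \ref{cor:imf}; if $|\ii(f)|=3$, say $f(a_1)=f(a_2)=:\beta$, then $\{(a_3,\beta),(a_4,\beta)\}$ is a $2$-cut, since every edge joining $a_1B\cup a_2B$ to $a_3B\cup a_4B$ has an endpoint at one of $(a_3,\beta),(a_4,\beta)$, while $a_1B$ and $a_2B$ remain joined by $(a_1,\beta)(a_2,\beta)$. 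For the $2$-connectivity of $B$: if $b$ were a separating vertex of $B$, Theorem \ref{thm:sep}\ref{eq:sepii} (applied for each $a\in V(K_4)$) forces every component of $B-b$ to meet $\{f(a'):a'\neq a\}$, so (by injectivity) $B-b$ has exactly two components, each containing exactly two of $f(a_1),\dots,f(a_4)$; but then $\{(\alpha,b),(\alpha',b)\}$, where $f(\alpha),f(\alpha')$ lie in the same component of $B-b$, becomes a $2$-cut of $K_4\t B$ by the same edge-counting as in the non-injectivity case, a contradiction. Condition~3 is obtained by an analogous image-counting argument: if $\{b',b''\}$ is a $2$-cut of $B$ and some component of $B-b'-b''$ contains at most one image, one again constructs an explicit $2$-cut of the form $\{(a,b'),(a,b'')\}$. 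Finally, condition~4 follows directly from \cite[Theorem 2.16]{kpzz19} together with the injectivity of $f$.

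For the backward direction (assuming the four conditions), planarity of $K_4\t B$ is immediate from \cite[Theorem 2.16]{kpzz19}, since the unique block of $K_4$ is $K_4$ itself and condition~4 provides the required planar embedding of $B$. For $3$-connectivity, first rule out separating vertices via Theorem \ref{thm:sep}: case \ref{eq:sepi} fails as $K_4$ is $3$-connected, case \ref{eq:sepii} fails by $2$-connectivity of $B$, and case \ref{eq:sepiii} fails because the three neighbours of any $a\in V(K_4)$ have three distinct images under the injective $f$. Next, rule out minimal $2$-cuts $\{(\alpha_1,\beta_1),(\alpha_2,\beta_2)\}$ by cases. If $\alpha_1=\alpha_2=\alpha$ and $\{\beta_1,\beta_2\}$ is not a $2$-cut of $B$, then $\alpha B-(\alpha,\beta_1)-(\alpha,\beta_2)$ is connected and, by injectivity, retains at least one intact gateway $(\alpha,f(a_j))$; if $\{\beta_1,\beta_2\}$ is a $2$-cut of $B$, then condition~3 guarantees that each component of $B-\beta_1-\beta_2$ contains an image $f(a_j)$ with $a_j\neq\alpha$, so the corresponding piece of $\alpha B$ connects to the outside via the intact edge $(\alpha,f(a_j))(a_j,f(\alpha))$. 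If $\alpha_1\neq\alpha_2$, let $\alpha_3,\alpha_4$ denote the two remaining vertices of $K_4$; $\alpha_3B$ and $\alpha_4B$ are untouched and joined by the intact edge $(\alpha_3,f(\alpha_4))(\alpha_4,f(\alpha_3))$, while $2$-connectivity of $B$ keeps each $\alpha_iB-(\alpha_i,\beta_i)$ connected, and by injectivity of $f$ at least one of the two edges $(\alpha_i,f(\alpha_j))(\alpha_j,f(\alpha_i))$, $j\in\{3,4\}$, has both endpoints intact, linking $\alpha_iB-(\alpha_i,\beta_i)$ to $\alpha_3B\cup\alpha_4B$.

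The main difficulty will be cleanly organising the explicit $2$-cut constructions in the forward direction, particularly the separating-vertex case, where one must verify both that the purported cut disconnects $K_4\t B$ and that no bypass path survives through intact gateways between the untouched pieces $a_iB$. The case analysis is finite but delicate, made manageable by the fact that $K_4$ has only four vertices and each $a_iB$ has exactly three external gateways indexed by $\{f(a_j):j\neq i\}$.
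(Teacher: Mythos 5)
Most of your argument is sound, and in the backward direction your case analysis of minimal $2$-cuts is actually more explicit than the paper's (which essentially only sketches the $3$-connectivity check); your forward-direction constructions of explicit $2$-cuts for injectivity, for the $2$-connectivity of $B$, and for condition~3 all check out. However, there is a genuine gap at the last step of the forward direction: condition~4 does \emph{not} ``follow directly from \cite[Theorem 2.16]{kpzz19} together with the injectivity of $f$.'' That theorem, as used throughout the paper, yields for each vertex $a_i$ of $K_4$ a face of $B$ containing the images of the \emph{neighbours} of $a_i$ --- that is, only the three vertices $f(a_j)$, $j\neq i$ --- and a priori these four faces may all be different. Passing from ``every three of the four images lie on a common face'' to ``all four lie on a common face'' is false in general: $B=K_4$ with $f$ a bijection is a counterexample, since $K_4\s K_4$ is a polyhedron (Proposition \ref{prop:AA}) yet no face of $K_4$ contains all four vertices. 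This is precisely why the proposition carries the hypothesis $B\not\simeq K_4$, a hypothesis your proof never invokes --- a clear sign the step is missing.

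The paper closes this gap with a dedicated argument: it first shows directly (by examining which copies $a_jB$ must lie inside or outside the outer face of $a_1B$, and noting that the edge $(a_2,f(a_3))(a_3,f(a_2))$ would otherwise cross a face boundary) that any three of the images share a face; it then assumes no face contains all four, deduces that the embedding of $B$ must begin as a (possibly subdivided) $K_4$ on the four images with any additional vertices confined to the subdivided edges, and uses $B\not\simeq K_4$ together with the $3$-connectivity of $K_4\t B$ to extract a forbidden $2$-cut such as $\{(a_1,f(a_1)),(a_1,f(a_2))\}$. You need some version of this argument (or another route that genuinely uses $B\not\simeq K_4$) to establish condition~4; as written, your forward direction is incomplete at exactly this point.
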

\begin{proof}
For the first part of this proof, we will assume that $A\t B$ is a polyhedron. % By Lemma \ref{le:deg3}, $\delta(A)\geq 3$ thus $A\simeq K_4$.
Let us show that $f$ is injective. Since $A\t B$ is $3$-connected, there exist at least three vertices in $a_1B$ adjacent to vertices outside of $a_1B$. Therefore, $f(a_2),f(a_3),f(a_4)$ are distinct. Repeating the same reasoning for $a_2B$, $a_3B$, and $a_4B$, we conclude that \[f(a_1),f(a_2),f(a_3),f(a_4)\]
are all distinct i.e., $f$ is injective.

Since $A\t B$ is a polyhedron, then by Proposition \ref{le:deg3} $B$ is a connected, planar graph with at least three vertices. Let us show that $B$ is $2$-connected. By contradiction, let $b$ be a separating vertex of $B$. Then $(a_1,b)$ is a separating vertex of $a_1B$, thus the graph
\begin{equation}
	\label{eq:k4r}
a_1B-(a_1,b)
\end{equation}
has at least two connected components. The only vertices in $a_1B$ adjacent to vertices outside of $a_1B$ are
\begin{equation}
	\label{eq:k43v}
(a_1f(a_2)),(a_1f(a_3)),(a_1f(a_4)).
\end{equation}
Hence there exists a connected component of \eqref{eq:k4r} containing at most one of \eqref{eq:k43v}. If this component contains none of \eqref{eq:k43v}, then
\[A\t B-(a_1,b)\]
is disconnected, contradiction. If instead there exists a component $J$ of \eqref{eq:k4r} containing $(a_1f(a_2))$ but neither $(a_1f(a_3))$ nor $(a_1f(a_4))$, then either
\[A\t B-(a_1,b)-(a_1,f(a_2))\]
is disconnected -- contradiction, or $J$ is just the vertex $(a_1,f(a_2))$. In the latter case, the only neighbours of $(a_1,f(a_2))$ in $A\t B$ are $(a_1,b)$ and $(a_2,f(a_1))$, contradiction. Hence $B$ is indeed $2$-connected.

Now let $A\t B$ be a polyhedron, and $\{b',b''\}$ a $2$-cut in $B$. Then for each $i=1,2,3,4$, the graph $a_iB$ has the $2$-cut $\{a_ib',a_ib''\}$. Let us assume by contradiction that a connected component of $B-b'-b''$ contains at most one of $f(a_1),f(a_2),f(a_3),f(a_4)$, say $f(a_j)$. Then $A\t B$ has the $2$-cut $\{a_jb',a_jb''\}$, contradiction. Thereby, each connected component of $B-b'-b''$ contains at least two of $f(a_1),f(a_2),f(a_3),f(a_4)$. It follows that $B-b'-b''$ has exactly two connected components, and each of them contains exactly two of $f(a_1),f(a_2),f(a_3),f(a_4)$.

We have already checked that $B$ is a planar, $2$-connected graph. We will now show that there exists a planar embedding of $B$ such that $f(a_1),f(a_2),f(a_3),f(a_4)$ all lie on the same region. We sketch $a_1B$ in the plane such that $(a_1,f(a_2))$ belongs to the boundary of the exterior region $\calR$. By contradiction, suppose that $(a_1,f(a_3))$ does not belong to $\calR$. In the sketch of $A\t B$, by planarity $a_2B$ will be outside of $\calR$ and $a_3B$ inside. Then the edge
\[(a_2,f(a_3))(a_3,f(a_2))\]
crosses the boundary of $\calR$, contradiction. Permuting the indices we deduce that there exists a planar embedding of $B$ such that given any three of
\begin{equation}
	\label{eq:1234}
f(a_1),f(a_2),f(a_3),f(a_4)
\end{equation}
they belong to the same region. Suppose by contradiction that no region contains all four. Then w.l.o.g.\ we may start the sketch of $B$ by drawing a tetrahedron of vertices \eqref{eq:1234} (and subsequently subdivide the edges of this tetrahedron if necessary). Since $B\not\simeq K_4$, the sketch is not yet complete. Any further vertices must lie on the existing edges, not inside the existing faces, otherwise there would exist three of \eqref{eq:1234} not lying on the same region. Hence w.l.o.g.\ we insert $b_5$ along the edge $f(a_1)f(a_2)$. We must also add at least one further edge, otherwise $b_5$ would be of degree $2$ in $B$, thus $(a_1,b_5)$ would be of degree $2$ in $A\t B$, impossible. Any new path we draw starting from $b_5$ must end somewhere along the edge $f(a_1)f(a_2)$, else there would exist three of \eqref{eq:1234} not lying on the same region. However, this means that
\[(a_1,f(a_1)),(a_1,f(a_2))\] 
is a $2$-cut in $A\t B$, contradiction.

Vice versa, let $B$ be a $2$-connected plane graph, and $f$ a function such that \eqref{eq:1234} are distinct vertices on the boundary of the external face of $B$. We assume moreover that any $2$-cut in $B$ results in exactly two components, each containing two of \eqref{eq:1234}. Our aim is to show that $K_4\t B$ is a polyhedron. To check planarity, we sketch as in Figure \ref{fig:K4B} and use the fact that $B$ is planar.
\begin{figure}[ht]
	\centering
	\includegraphics[width=6.5cm]{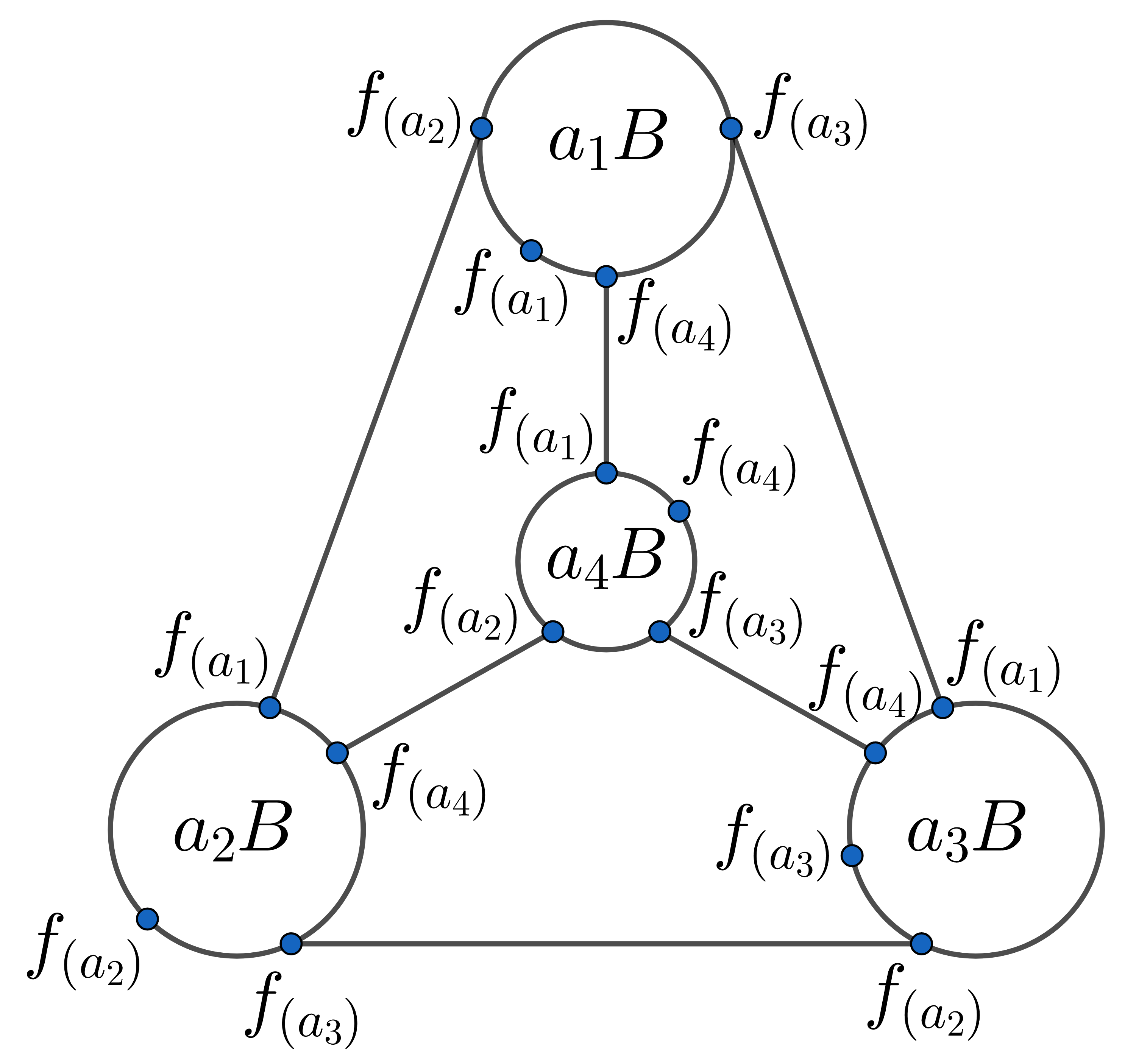}
	\caption{The labels are the second entries of the vertices. $f(a_1),f(a_2),f(a_3),f(a_4)$ appear around a region of $B$ in this order.}
	\label{fig:K4B}
\end{figure}

It remains to check $3$-connectivity. First, given any two of $a_1B,a_2B,a_3B,a_4B$, there is an edge connecting them. Second, for each $i\in\{1,2,3,4\}$, any $2$-cut of $a_iB$ results in exactly two components, each containing two of
\[(a_i,f(a_1)),(a_i,f(a_2)),(a_i,f(a_3)),(a_i,f(a_4)),\]
hence the $2$-cut of $a_iB$ is not a $2$-cut of $A\t B$. This completes the proof.
\end{proof}

%\begin{cor}
%Let $A,B$ be graphs such that $A\s B$ is a polyhedron. Then $A\simeq B\simeq K_4$.
%\end{cor}
%\begin{proof}
%By Proposition \ref{prop:AB}, $A\simeq K_4$. Suppose by contradiction that $B\not\simeq K_4$. Recall that $A\s B$ is the Sierpi\'{n}ski product $A\t B$ in the special case where $V(A)\subseteq V(B)$ and $f$ is the identity on its domain. Thereby, on one hand the subgraph of $B$ induced by the set $V(A)$ is a tetrahedron. On the other hand, by Proposition \ref{prop:K4B}, there exists a planar immersion of $B$ such that the elements of $V(A)$ all lie on the same region, contradiction.
%\end{proof}

\subsection{$A\t K_2$}
The following result generalises scenario 1 of Theorem \ref{thm:class}. We drop the condition of regularity of the product, and characterise all graphs $A$ and maps $f$ such that $A\t K_2$ is a polyhedron. 
\begin{prop}
	\label{prop:rb}
	Let $A$ be a graph. There exists a function
	\[f:V(A)\to V(K_2)\]
	such that
	\[A\t K_2\]
	is a polyhedron if and only if $A$ is a polyhedron satisfying \[\delta(\alpha)\geq 4\]
	and we may assign to each element of $V(A)$ one of two colours red and blue, in such a way that for every vertex, when we consider the cyclic orientation of its neighbours around it, those of one colour are all consecutive, and moreover there are at least two neighbours of each colour.
	%be the function mapping the red vertices to $x$ and the blue ones to $y$. Then
\end{prop}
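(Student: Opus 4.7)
My plan is to treat the two directions separately. Throughout, write $V(K_2)=\{x,y\}$ and call a vertex \emph{red} (resp.\ \emph{blue}) if $f(a)=x$ (resp.\ $f(a)=y$).

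\emph{Forward direction.} Assume $A\t K_2$ is a polyhedron. Lemma~\ref{le:deg3} gives that $A$ is planar, connected, and $\delta(A)\geq 4$. Unpacking the definition of $\t$, the external neighbours of $(a,x)$ are exactly the vertices $(a',f(a))$ for $a'$ a red neighbour of $a$, and symmetrically for $(a,y)$; the degree-$\geq 3$ condition at each of $(a,x),(a,y)$ then gives $\geq 2$ red and $\geq 2$ blue neighbours of $a$. For the cyclic-order condition, take the essentially unique planar embedding of $A\t K_2$ and contract each edge $(a,x)(a,y)$: the induced cyclic order around the contracted vertex concatenates the red external neighbours of $(a,x)$ with the blue external neighbours of $(a,y)$, so both colour classes appear as consecutive arcs. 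The harder step is 3-connectivity of $A$. A quick computation shows $(a,x)$ and $(a,y)$ share no common neighbour in $A\t K_2$: their external neighbours all have second coordinate $f(a)$ but come from disjoint red/blue subsets of $N_A(a)$. A separating vertex $a$ of $A$ would yield the 2-cut $\{(a,x),(a,y)\}$ of $A\t K_2$, since $A\t K_2-(a,x)-(a,y)=(A-a)\t K_2$ is disconnected by \cite[Proposition~2.12]{kpzz19}. For a putative 2-cut $\{a_1,a_2\}$ of $A$ with components $H_1,H_2$, the planar embedding places the $H_1$- and $H_2$-neighbours of each $a_i$ in consecutive cyclic arcs, and the colour condition places reds and blues in consecutive arcs; combining the two, I would argue that both $a_1$ and $a_2$ must be in ``Case A'' (red arc aligned with one of the component arcs), and then a pair $\{(a_1,z_1),(a_2,z_2)\}$ chosen so both $(a_i,z_i)$ lie entirely on the same component's side is a 2-cut of $A\t K_2$, contradicting 3-connectivity.

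\emph{Converse direction.} Given the colouring, define $f$ accordingly. Take a planar embedding of $A$ realising the cyclic condition and split each $a$ into the edge $(a,x)(a,y)$ drawn along the red/blue boundary at $a$; this gives a planar embedding of $A\t K_2$, and the $\geq 2$-of-each-colour condition yields $\delta(A\t K_2)\geq 3$. For 3-connectivity I would invoke Theorem~\ref{thm:sep} to exclude separating vertices: condition (i) is ruled out by $A$ being 3-connected, (ii) because $K_2$ has no separating vertex, and (iii) because each $a$ has neighbours of both colours. For 2-cuts I would apply Proposition~\ref{prop:kc}: its three options require a 2-cut of $A$ (excluded), a 2-cut of $K_2$ (impossible since $|V(K_2)|=2$), or a candidate of the form $\{(a,x),(a,y)\}$, which however is not a cut since $A\t K_2-(a,x)-(a,y)=(A-a)\t K_2$ is connected because $A-a$ is.

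The main obstacle is excluding 2-cuts of $A$ in the forward direction. A priori the colouring around a 2-cut $\{a_1,a_2\}$ could be of ``Case B'' type at some $a_i$, in which the red arc straddles the $H_1/H_2$ component transition, and Case B does not immediately produce a 2-cut of the product. The delicate step is to propagate the consecutive-colour and $\geq 2$-of-each constraints to the vertices of $H_1$ and $H_2$ adjacent to $a_1,a_2$, using the face structure of the planar embedding near the $H_1/H_2$ boundary, and show that Case B is globally incompatible with $A\t K_2$ being 3-connected, so that both $a_1$ and $a_2$ must lie in Case A.
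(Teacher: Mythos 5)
Your converse direction is complete, and arguably better justified than the paper's own: the paper excludes $2$-cuts of $A\t K_2$ by contracting the matching $\{(a,x)(a,y):a\in V(A)\}$ back to $A$, whereas you route the argument through Theorem \ref{thm:sep} and Proposition \ref{prop:kc}, which is sound. In the forward direction you follow the paper's steps (Lemma \ref{le:deg3}, the degree count, reading off the rotation after contracting the matching) up to the one genuinely hard point, the $3$-connectivity of $A$, and there your proof has a real gap that you yourself flag: you do not exclude your ``Case B'', in which a colour arc around a vertex of a $2$-cut $\{a_1,a_2\}$ straddles the transition between the components of $A-a_1-a_2$. Be aware that the paper disposes of this step in a single sentence --- contract the matching and assert that $3$-connectivity is inherited --- but contracting a perfect matching does not preserve $3$-connectivity in general (contracting the four vertical edges of the cube $Q_3$ yields $C_4$), so the paper's justification of exactly this step is not a valid argument either; your instinct that this is the delicate point is correct.

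Moreover, Case B does not appear to be excludable, so no completion of your sketch along these lines can succeed. Take two copies of the following gadget, sharing the vertices $a_1,a_2$: a wheel with rim $r,r',b',b$ and hub $b''$, together with $a_1$ joined to $r,b$ and $a_2$ joined to $r',b'$; colour $a_1,a_2,r,r'$ red and $b,b',b''$ blue. The resulting $12$-vertex graph $A$ is connected, planar and $4$-regular, every vertex has exactly two red and two blue neighbours forming consecutive arcs in the rotation, and $\{a_1,a_2\}$ is a $2$-cut, so $A$ is not a polyhedron. Yet each $a_i$ has one red and one blue neighbour in \emph{each} component of $A-a_1-a_2$, so for every choice of $z_1,z_2$ the graph $A\t K_2-(a_1,z_1)-(a_2,z_2)$ stays connected; combined with Theorem \ref{thm:sep} and Proposition \ref{prop:kc} (the only $2$-cut of $A$ is $\{a_1,a_2\}$, and $A$ has no cut vertex) this shows that $A\t K_2$ is a cubic, planar, $3$-connected graph on $24$ vertices. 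So the implication ``$A\t K_2$ polyhedron $\Rightarrow A$ $3$-connected'' that both you and the paper need seems to be false as stated, and the gap in your proof sits exactly where the statement itself (together with Lemma \ref{le:A22conn} and scenario 1 of Theorem \ref{thm:class}, which rely on the same inheritance claim) breaks down. If the statement is to be salvaged, the forward direction must be weakened to assert only $2$-connectivity of $A$ plus a condition on how the colour classes meet the components at each $2$-cut, in the spirit of Definition \ref{def:hash}.
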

\begin{proof}
$\Rightarrow$. Let $f$ be such that $A\t K_2$ is a polyhedron. By Lemma \ref{le:deg3}, $A$ is a planar graph satisfying $\delta(A)\geq 4$. We write $V(K_2)=\{x,y\}$. For each $a\in V(A)$, we assign the colour red if $f(a)=x$, and the colour blue if $f(a)=y$. % Since $A\t K_2$ is $3$-connected, then for every $a\in V(A)$ we have
%\[\deg_{A\t K_2}((a,x))=\deg_{A\t K_2}((a,y))=3.\]
%By assumption, and by definition of Sierpi\'{n}ski product, $A\t K_2$ is cubic.
In the planar sketch of $A\t K_2$, for each $a\in V(A)$ the vertex $(a,x)$ is adjacent to $(a,y)$ and to
\[(a_1,z),(a_2,z),\dots,(a_i,z)\]
in this cyclic order around $(a,x)$, where $i\geq 2$, $a_1,a_2,\dots,a_i$ are red vertices of $A$, and $z=x$ if $a$ is red, while $z=y$ if $a$ is blue. Similarly, $(a,y)$ is adjacent to $(a,x)$ and to
\[(a_{i+1},z),(a_{i+2},z),\dots,(a_{i+j},z)\]
in this cyclic order around $(a,y)$, where $j\geq 2$ and $a_{i+1},a_{i+2},\dots,a_{i+j}$ are blue vertices of $A$.
\\
We now contract all edges of $A\t K_2$ of type
\[(a,x)(a,y), \quad a\in V(A).\]
The result is a plane graph isomorphic to $A$. Since $A\t B$ is $3$-connected, then so is $A$. By construction, for each $a\in V(A)$, the cyclic order of its neighbours around $a$ is either
\[a_1,a_2,\dots,a_i,a_{i+1},a_{i+2},\dots,a_{i+j}\]
or
\[a_i,a_{i-1},\dots,a_1,a_{i+1},a_{i+2},\dots,a_{i+j}.\]
Hence those of same colour are consecutive in the cyclic order around $a$, as claimed. 

$\Leftarrow$. Writing $V(K_2)=\{x,y\}$, we define for every $a\in V(A)$
\[f(a)=\begin{cases}
	x & a \text{ is red};
	\\y & a \text{ is blue}.
\end{cases}\]
The assumptions of \cite[Theorem 2.13]{kpzz19} are all satisfied, hence $A\t K_2$ is a planar graph. It remains to check its $3$-connectivity. By contradiction, there exist
\[(a',z'), (a'',z'')\in V(A\t K_2)\]
with $z',z''\in\{x,y\}$ such that
\[A\t K_2-(a',z')-(a'',z'')\]
is disconnected. By construction, if in $A\t K_2$ we contract all edges $(a,x)(a,y)$ for every $a\in V(A)$, we obtain a graph isomorphic to $A$. Thus $A-a'-a''$ is disconnected, contradiction.	
	%The proof of Proposition \ref{pr:rb}, \eqref{eq:i}$\Leftrightarrow$\eqref{eq:iii} is valid ditto in this more general scenario.
	%The assumptions of \cite[Theorem 2.13]{kpzz19} are all satisfied, hence $A\t K_2$ is a planar graph. It remains to check its $3$-connectivity. By contradiction, there exist
	%\[(a',z'), (a'',z'')\in V(A\t K_2)\]
	%with $z',z''\in\{x,y\}$ such that
	%\[A\t K_2-(a',z')-(a'',z'')\]
	%is disconnected. By construction, if in $A\t K_2$ we contract all edges $(a,x)(a,y)$ for every $a\in V(A)$, we obtain a graph isomorphic to $A$. Thus $A-a'-a''$ is disconnected, contradiction.
\end{proof}

\subsection{$A\t K_3$}
In this section, we revisit scenario 2 of Theorem \ref{thm:class}, taking $B=K_3$, the smallest graph in $\cb_3^*(1,1,1)$. Given a polyhedron $G$, the operation of truncation of each vertex may be described as follows. For each $u\in V(G)$, we consider its neighbours $v_1,v_2,\dots,v_n$ in cyclic order around $u$, and perform
\[G-u+w_1+w_2+\dots+w_n+w_1v_1+w_2v_2+\dots+w_nv_n+w_1w_2+w_2w_3+\dots+w_nw_1.\]
The obtained graph is a polyhedron called the truncation of $G$.
\begin{rem}
	\label{rem:trun}
	Let $A\in\ca(1,1,1)$ be a polyhedron, and take a copy of $K_3$ where the three vertices have the colours $c_1,c_2,c_3$. If $f:V(A)\to V(K_3)$ is a function that preserves colours, then the polyhedron $A\t K_3$ is isomorphic to the truncation of $A$.
\end{rem}
For instance, the truncation of the triangular prism in Figure \ref{fig:A111bis} is depicted in Figure \ref{fig:trun}.
\begin{figure}[ht]
	\centering
	\begin{subfigure}{0.4\textwidth}
		\centering
		\includegraphics[width=3.cm]{A111.png}
		\caption{$A\in\ca(1,1,1)$.}
		\label{fig:A111bis}
	\end{subfigure}
	%\hfill
	\begin{subfigure}{0.58\textwidth}
		\centering
		\includegraphics[width=3.75cm]{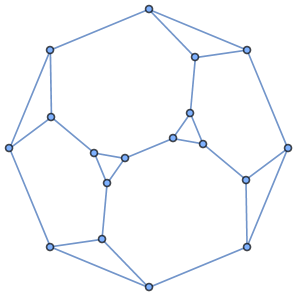}
		\caption{$A\t K_3$ is the truncation of $A$.}
		\label{fig:trun}
	\end{subfigure}
	\caption{Remark \ref{rem:trun}.}
	\label{fig:trunc}
\end{figure}

\appendix
\clearpage
\section{Constructions for $\cb_r^*(1,1,1)$ and $\cb_r^*(2,1,1)$.}
\label{sec:appa}

We start with a complete characterisation of the classes $\cb_3^*(1,1,1)$ and $\cb_3^*(2,1,1)$.
\begin{prop}
	We denote by $\cg_3(1)$ the class of graphs obtained by deleting any vertex from a cubic polyhedron, and $\cg_3(2)$ the class of graphs obtained by deleting any two adjacent edges from a cubic polyhedron. Then
	\[\cb_3^*(1,1,1)=\cg_3(1)
	\qquad\text{and}\qquad
	\cb_3^*(2,1,1)=\cg_3(2).\]
\end{prop}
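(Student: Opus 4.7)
The plan is to establish each of the two equalities by double inclusion, using Definition \ref{def:b*} together with the fact that deleting a vertex (resp.\ a pair of adjacent edges) from a cubic polyhedron merges the incident faces into a single face.

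For the inclusion $\cg_3(1) \subseteq \cb_3^*(1,1,1)$, given a cubic polyhedron $G$ and a vertex $v$, the graph $B := G - v$ is planar with three degree-$2$ vertices (the neighbours of $v$) lying together on the face of $B$ obtained by merging the three faces of $G$ incident to $v$; $3$-connectivity of $G$ makes $B$ $2$-connected (so the $h=1$ condition is vacuous) and forces every component of $B - y_1 - y_2$ to contain a neighbour of $v$, since otherwise $\{y_1, y_2\}$ would already be a $2$-cut of $G$. For the reverse inclusion, given $B \in \cb_3^*(1,1,1)$ I would attach a new vertex $v$ inside the common face of the three degree-$2$ vertices and connect $v$ to all three; the resulting cubic planar graph $G$ is $3$-connected because any hypothetical $2$-cut containing $v$ contradicts Lemma \ref{le:B111}, while any $2$-cut $\{y_1, y_2\}$ avoiding $v$ leaves components of $B - y_1 - y_2$ each containing a degree-$2$ vertex and therefore re-glued through $v$.

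For the inclusion $\cg_3(2) \subseteq \cb_3^*(2,1,1)$, I would start with a cubic polyhedron $G$ and two edges $uv, uw$ meeting at $u$ with third neighbour $x$; the graph $B := G - uv - uw$ inherits planarity, has $u$ of degree $1$ and $v, w$ of degree $2$, and the three faces of $G$ at $u$ merge into a single face of $B$ containing $u, v, w$ on its boundary. The $3$-connectivity of $G$ implies that $G - y_1 - y_2$ is always connected, so every component of $B - y_1 - y_2 = G - y_1 - y_2 - uv - uw$ must contain an endpoint of one of the two deleted edges, i.e.\ a defect vertex in $\{u, v, w\}$, which handles the $h = 2$ condition; an analogous argument shows that $x$ is the only separating vertex of $B$ (otherwise $\{y, x\}$ would be a $2$-cut of $G$) and that $B - x$ splits as $\{u\}$ and its complement, each of defect $2$.

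The main technical content is the reverse inclusion $\cb_3^*(2,1,1) \subseteq \cg_3(2)$. Given $B \in \cb_3^*(2,1,1)$ with degree-$1$ vertex $u$, its unique neighbour $x$, and degree-$2$ vertices $v, w$, the four vertices $u, v, w, x$ all lie on a common face of $B$ because the pendant edge $ux$ is traversed twice by the boundary of the face containing $u, v, w$. Drawing $uv$ and $uw$ inside this face yields a planar cubic graph $G$, and the task is to show $G$ is $3$-connected. The crucial claim, and the main obstacle of the entire proof, is that the only separating vertex of $B$ is $x$: if some $y \neq x$ were separating, then the $h = 1$ condition forces $B - y$ into exactly two components $C_u \ni u$ and $C_{vw} \ni v, w$ of defect $2$ each, while $C_u$ must also contain $x$ together with at least one further neighbour of $x$ (the two further neighbours of $x$ in $B$ cannot both equal the single vertex $y$), so $|C_u| \geq 3$; applying the $h = 2$ condition to the $2$-cut $\{x, y\}$ then exhibits a component contained in $C_u \setminus \{u, x\}$ made entirely of degree-$3$ vertices, hence of defect $0$, a contradiction. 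Once this claim is available, a case analysis on whether a hypothetical $2$-cut $\{y_1, y_2\}$ of $G$ contains $u, v, w$ or avoids them closes the argument: cuts containing $u$ reduce to removing the pendant $u$ from the connected graph $B - y_2$, while cuts containing $v$ or $w$ (or none of the three) are healed by the added edges $uv, uw$ using the $h = 2$ condition on the induced cut of $B$.
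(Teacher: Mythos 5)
Your proposal follows the same route as the paper: both equalities are proved by double inclusion, adding a vertex (resp.\ two adjacent edges) inside the face carrying the low-degree vertices, and translating condition \eqref{eq:hsum} back and forth into the $3$-connectivity of the resulting cubic graph. One difference is worth recording. For $\cb_3^*(2,1,1)\subseteq\cg_3(2)$ you isolate and prove the claim that the neighbour $x$ of the degree-$1$ vertex $u$ is the \emph{only} separating vertex of $B$, via the auxiliary $2$-cut $\{x,y\}$; the paper's argument for this inclusion only invokes the $h=2$ condition together with the re-gluing edges $uv,uw$, which does not by itself dispose of a hypothetical $2$-cut of $G$ containing $u$ (deleting $u$ deletes the added edges as well), so your extra lemma is a genuine strengthening of that step rather than a detour. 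Two small points to patch: you use throughout that $x$ has two neighbours besides $u$, i.e.\ $x\notin\{v,w\}$ --- this needs the short argument the paper gives (apply \eqref{eq:hsum} to the cut $\{v,w\}$, which would otherwise leave an all-degree-$3$ component of defect $0$), and it is also what makes $G:=B+uv+uw$ simple; and in the direction $\cg_3(2)\subseteq\cb_3^*(2,1,1)$ the $2$-cut of $G$ witnessing that $y\neq x$ cannot separate $B$ is $\{y,u\}$ (a component of $B-y$ containing $v$ or $w$ but not $u$ is attached to the rest of $G-y$ only through $u$), not $\{y,x\}$; the conclusion is unaffected.
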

\begin{proof}
	Let $B\in\cb_3^*(1,1,1)$. By definition, there exists a planar embedding of $B$ such that the three vertices of degree $2$ all lie on one region. We add a vertex $v$ inside of this region, and edges between $v$ and the three vertices of degree $2$ in $B$, obtaining a $3$-regular, plane graph $G$. By contradiction, let $\{b_1,b_2\}$ be a $2$-cut in $G$. By Lemma \ref{le:B111}, $B$ is $2$-connected, thus $v\not\in\{b_1,b_2\}$, so that $\{b_1,b_2\}$ is also a $2$-cut in $B$. By assumption, every component of $B-b_1-b_2$ contains at least one of the vertices of degree $2$. Since these are all adjacent to $v$ in $G$, then $\{b_1,b_2\}$ cannot be a $2$-cut in $G$, contradiction.
	
	Vice versa, let $G$ be a cubic polyhedron, $v\in V(G)$, and $B:=G-v$. Then $B$ is a $2$-connected graph of degree sequence
	\[3,3,\dots,3,2,2,2,\]
	that has a planar embedding such that the vertices of degree $2$ all lie on one region $\calR$. If $\{b_1,b_2\}$ is a $2$-cut in $B$, then since $G$ is $3$-connected, both of $b_1,b_2$ lie on $\calR$, and both components of $B-b_1-b_2$ contain at least one vertex of degree $2$ in $B$. Thus $B\in\cb_3^*(1,1,1)$.
	
	Let $B\in\cb_3^*(2,1,1)$. Call $v$ the vertex of degree $1$ and $u,w$ those of degree $2$. By the handshaking lemma, there is a least one vertex of degree $3$. We claim that the neighbour of $v$ has degree $3$. By contradiction, if it has degree $2$, then we delete $u,w$ from $B$, resulting in a component containing only vertices that have degree $3$ in $B$, contradicting \eqref{eq:hsum}. We consider the new graph
	\[G:=B+vu+vw,\]
	that is $3$-regular and planar by construction. By contradiction, let $\{b_1,b_2\}$ be a $2$-cut in $G$. By assumption, every component of $B-b_1-b_2$ contains at least one of $v,u,w$. Since $vu,vw\in E(G)$, then $\{b_1,b_2\}$ cannot be a $2$-cut in $G$, contradiction. It is worth noting that $B-v\in\cb_3^*(1,1,1)$.
	
	Vice versa, let $G$ be a cubic polyhedron, $v\in V(G)$, and $u,w$ two neighbours of $v$. We consider the new graph
	\[B:=G-vu-vw.\]
	Then $B$ is a connected, planar graph of degree sequence
	\[3,3,\dots,3,2,2,1,\]
	that has a planar embedding such that the vertices of degree less than $3$ all lie on one region $\calR$. If $\{b_1,b_2\}$ is a $2$-cut in $B$, then since $G$ is $3$-connected, both of $b_1,b_2$ lie on $\calR$, and both components of $B-b_1-b_2$ contain at least one vertex of degree less than $3$ in $B$. Thus $B\in\cb_3^*(2,1,1)$.
\end{proof}

For instance, $K_3\in\cb_3^*(1,1,1)$ is obtained from the tetrahedron by removing a vertex, and the member of $\cb_3^*(2,1,1)$ with edges $\{b_1b_4,b_2b_4,b_3b_4,b_2b_3\}$ is obtained from the tetrahedron by removing two adjacent edges.

More generally, to construct elements of $\cb_r^*(2,1,1)$, $3\leq r\leq 5$, one may start with an $r$-regular polyhedron, choose any face and delete two adjacent edges on it. The class $\cb_4^*(1,1,1)$ is of course empty due to the handshaking lemma.

Lastly, to obtain members of $\cb_5^*(1,1,1)$, one may proceed as follows. For every integer $h\geq 0$ one starts with the $8+40h$-gonal antiprism. In the internal $8+40h$-gon $\calR$, we add $2+10h$ new vertices,
\[u_1,u_2,\dots,u_{2+10h}\]
each adjacent to four consecutive vertices of $\calR$, and extra edges
\[u_1u_2,u_3u_4,\dots,u_{1+10h}u_{2+10h}.\]
In the external $8+40h$-gon $\calS$, we add $1+8h$ new vertices, each adjacent to five consecutive vertices of $\calS$, leaving $3$ vertices of degree $4$ on the same face of the resulting polyhedron. The case $h=0$ is depicted in Figure \ref{fig:B5111}.
\begin{figure}[ht]
	\centering
	\includegraphics[width=4.5cm]{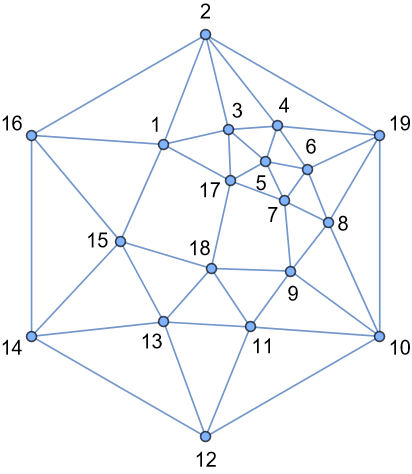}
	\caption{A $3$-connected member of $\cb_5^*(1,1,1)$. Vertices of $\calR$ are the odd numbers up to $16$, and vertices of $\calS$ are the even numbers up to $16$. Adding the vertices $17,18,19$ as described leaves $12,14,16$ of degree $4$.}
	\label{fig:B5111}
\end{figure}

\clearpage
\section{Table of regular, connected, planar Sierpi\'{n}ski products}
\label{sec:appb}

We wish to classify the regular, connected, planar Sierpi\'{n}ski products. We apply Proposition \ref{prop:col}, and then proceed as in the proof of Theorem \ref{thm:class}, without assuming higher connectivity for the product.

\begin{prop}
	\label{prop:2345}
	Let $A,B$ be non-trivial graphs and $f$ a function. Then $A\t B$ is $r$-regular, connected, and planar if and only if $A\in\ca(n_1,n_2,\dots,n_k)$, $B\in\cb_r(n_1,n_2,\dots,n_k)$, and $f$ preserves colours, where $(n_1,n_2,\dots,n_k)$ takes one of the values listed in Table \ref{t:2}.
		\begin{table}[h!]
		\centering
		$\begin{array}{|l|l|l|}
			\hline r&(n_1,n_2,\dots,n_k)&\text{case}\\
			\hline 2&(1,1)&1\\
			\hline 3&(1), A=K_2&2\\
			\hline 3&(1,1)&3\\
			\hline 3&(2)&4\\
			\hline 3&(1,1,1)&5\\
			\hline 3&(2,1)&6\\
			\hline 3&(2,1,1)&7\\
			\hline 3&(2,2)&8\\
			\hline 4&(1,1)&9\\
			\hline 4&(2)&10\\
			\hline 4&(2,1,1)&11\\
			\hline 4&(2,2)&12\\
			\hline 4&(3,1)&13\\
			\hline 5&(1), A=K_2&14\\
			\hline 5&(1,1)&15\\
			\hline 5&(2)&16\\
			\hline 5&(1,1,1)&17\\
			\hline 5&(2,1)&18\\
			\hline 5&(3)&19\\
			\hline 5&(2,1,1)&20\\
			\hline 5&(2,2)&21\\
			\hline 5&(3,1)&22\\
			\hline 5&(4)&23\\
			\hline 5&(4,1)&24\\
			\hline
		\end{array}$
		\caption{The cases where $A\t B$ is $r$-regular, connected, and planar.}
		\label{t:2}
	\end{table}
	\begin{comment}
	\begin{table}[h!]
		\centering
		$\begin{array}{|l|l|l|l|}
			\hline r&A&\sigma(B) \text{ of }B\in\cb(n_1,n_2,\dots,n_k)&\text{case}\\
			\hline 2&\ca(1,1)&2,2,\dots,2,1,1&1\\
			\hline 3&K_2&3,3,\dots,3,2&2\\
			\hline 3&\ca(1,1)&3,3,\dots,3,2,2&3\\
			\hline 3&\ca(2)&3,3,\dots,3,1&4\\
			\hline 3&\ca(1,1,1)&3,3,\dots,3,2,2,2&5\\
			\hline 3&\ca(2,1)&3,3,\dots,3,2,1&6\\
			\hline 3&\ca(2,1,1)&3,3,\dots,3,2,2,1&7\\
			\hline 3&\ca(2,2)&3,3,\dots,3,1,1&8\\
			\hline 4&\ca(1,1)&4,4,\dots,4,3,3&9\\
			\hline 4&\ca(2)&4,4,\dots,4,2&10\\
			\hline 4&\ca(2,1,1)&4,4,\dots,4,3,3,2&11\\
			\hline 4&\ca(2,2)&4,4,\dots,4,2,2&12\\
			\hline 4&\ca(3,1)&4,4,\dots,4,3,1&13\\
			\hline 5&K_2&5,5,\dots,5,4&14\\
			\hline 5&\ca(1,1)&5,5,\dots,5,4,4&15\\
			\hline 5&\ca(2)&5,5,\dots,5,3&16\\
			\hline 5&\ca(1,1,1)&5,5,\dots,5,4,4,4&17\\
			\hline 5&\ca(2,1)&5,5,\dots,5,4,3&18\\
			\hline 5&\ca(3)&5,5,\dots,5,2&19\\
			\hline 5&\ca(2,1,1)&5,5,\dots,5,4,4,3&20\\
			\hline 5&\ca(2,2)&5,5,\dots,5,3,3&21\\
			\hline 5&\ca(3,1)&5,5,\dots,5,4,2&22\\
			\hline 5&\ca(4)&5,5,\dots,5,1&23\\
			\hline 5&\ca(4,1)&5,5,\dots,5,4,1&24\\
			\hline
		\end{array}$
		\caption{$\sigma(B)$ is the degree sequence of $B$.}
		\label{t:2}
	\end{table}
\end{comment}
\end{prop}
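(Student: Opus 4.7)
The plan is to reduce the statement to Proposition \ref{prop:col} (which already supplies the structural characterisation in terms of $\ca(n_1,\dots,n_k)$ and $\cb_r(n_1,\dots,n_k)$ together with a colour-preserving $f$) and then verify by direct enumeration that the admissible tuples $(n_1,\dots,n_k)$ are exactly those in Table \ref{t:2}. The backward direction will then follow at once once we know that $\ca(n_1,\dots,n_k)$ and $\cb_r(n_1,\dots,n_k)$ are non-empty for each listed row, and the forward direction reduces to showing that no further tuple can occur.

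First I will record the complete list of constraints on any admissible tuple $(n_1,\dots,n_k)$. Writing $N:=n_1+\dots+n_k$ (the regularity of $A$), the constraints are:
\begin{enumerate}[label=(\alph*)]
\item $k\leq 3$ by Proposition \ref{prop:emptya};
\item $N\leq 5$ since $A$ is a planar $N$-regular graph;
\item $n_1\leq r-1$, because each coloured vertex of $B$ must have positive degree in the connected non-trivial graph $B$;
\item if $r$ is even then $N$ is even, by applying the handshaking lemma to $B$, whose total degree equals $r|V(B)|-N$ (for $r$ odd, $|V(B)|$ can always be chosen of the appropriate parity, so no additional restriction arises);
\item $\ca(n_1,\dots,n_k)\neq\emptyset$, which by Proposition \ref{prop:A32} excludes $(3,2)$, $(2,2,1)$ and $(3,1,1)$ (and Proposition \ref{prop:emptya} already excludes all tuples with $k\geq 4$).
\end{enumerate}

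Next I will run through $r\in\{2,3,4,5\}$ and enumerate the tuples satisfying (a)--(e). For $r=2$, (c) forces every $n_i=1$ and (d) forces $N$ even, leaving only $(1,1)$. For $r=3$ (no parity constraint, $n_i\leq 2$) we are left with the seven tuples $(1),(2),(1,1),(2,1),(2,2),(1,1,1),(2,1,1)$. For $r=4$ ($N$ even, $n_i\leq 3$), the valid tuples are $(2),(1,1),(3,1),(2,2),(2,1,1)$; in particular $(1)$ and $(3)$ are ruled out by parity, $(4)$ by (c), and $(3,2)$ by (e). For $r=5$ (no parity, $n_i\leq 4$), all tuples with $\sum n_i\leq 5$ and $k\leq 3$ except those ruled out by (e) survive, giving the eleven entries listed. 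This reproduces Table \ref{t:2} exactly.

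For the converse, for each row I will exhibit an explicit $A\in\ca(n_1,\dots,n_k)$ using the examples and constructions of Section \ref{sec:A} (in particular Figures \ref{fig:A111}, \ref{fig:A211}, \ref{fig:A22}, \ref{fig:A41}) and an explicit $B\in\cb_r(n_1,\dots,n_k)$ taken as $K_2$, $K_3$, a path $P_n$, a subdivided $r$-regular polyhedron, an $r$-regular polyhedron with two adjacent edges of a common face removed, or the constructions of Appendix \ref{sec:appa}; the required embedding (with all non-maximal-degree vertices on a single face) is immediate from each construction. The main obstacle is essentially bookkeeping rather than any deep step: one must verify that (a)--(e) are jointly necessary and that they exactly recover the table, taking care with the borderline rows where $k=1$ and $n_1=1$ (rows $2$ and $14$), for which $\ca(1)=\{K_2\}$ already forces $A\simeq K_2$, as noted in the table itself.
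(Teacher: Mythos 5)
Your proposal is correct and follows essentially the same route as the paper, which likewise reduces the statement to Proposition \ref{prop:col} and then repeats the degree-sequence enumeration from the proof of Theorem \ref{thm:class} with the $3$-connectivity constraints (namely $\delta(A)\geq 3$ and $|\ii(f)|\geq 2$ or $3$) dropped. Your constraints (a)--(e) are exactly the surviving necessary conditions (with (c) and (d) encoding the connectedness and handshaking restrictions on $\sigma(B)$ that the paper extracts case by case), and your enumeration reproduces all $24$ rows of Table \ref{t:2}.
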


%\clearpage
%\paragraph{Acknowledgements.}
%Riccardo W. Maffucci was partially supported by Programme for Young Researchers `Rita Levi Montalcini' PGR21DPCWZ \textit{Discrete and Probabilistic Methods in Mathematics with Applications}, awarded to Riccardo W. Maffucci.

\clearpage
\addcontentsline{toc}{section}{References}
\bibliographystyle{abbrv}
\bibliography{allgra}
\end{document}